\newtheorem{theo}{Theorem}[section]
\newtheorem{lemma}{Lemma}[section]
\newtheorem{rem}{Remark}[section]
\newtheorem{prop}{Proposition}[section]
\newcommand{\KL}{\mbox{KL}}
\newcommand{\argmin}{\mbox{argmin}}
\newcommand{\HS}{(\mathbf{H_{\mathcal{S}}})}
\newcommand{\HL}{(\mathbf{H_{Lip}})}
\newcommand{\HD}{(\mathbf{H_{D}})}
\newcommand{\Th}{\Theta_{n}(c)}                                   
\newcommand{\Thh}{\Theta_n(m,c)}                                                                  
\newcommand{\Thhb}{\Theta_{n}(m,M,c,\overline{\lambda})}
\newcommand{\TThl}{\Theta_{n}(M,(\ell_n)_n,\overline{\lambda})}                                                       
\newcommand{\MM}{\mathcal{M}_{\Lambda,\mathfrak{M}}}
\newcommand{\MMn}{\mathcal{M}_{\Lambda_n,\mathfrak{M}_n}}
\newcommand{\ii}{\mathfrak{i}}
\newcommand{\PP}{\ensuremath{\mathbb{P}} }
\newcommand{\RR}{\ensuremath{\mathbb{R}} }
\newcommand{\NN}{\ensuremath{\mathbb{N}} }
\newcommand{\ps}{\ensuremath{\bullet} }
\newcommand{\tr}{\ensuremath{\tilde{r}} }
\newcommand{\te}{\ensuremath{\tilde{e}} }
\newcommand{\EE}{\ensuremath{{\mathbb E}}}
\newcommand{\cB}{{\cal B}}
\newcommand{\cC}{{\cal C}}
\newcommand{\cG}{{\cal G}}
\newcommand{\cI}{{\cal I}}
\newcommand{\cJ}{{\cal J}}
\newcommand{\cV}{{\cal V}}
\newcommand{\LL}{\mathbb{L}}
\numberwithin{equation}{section}
\theoremstyle{plain}
\begin{document}

\begin{frontmatter}
\title{Parameter recovery in two-component contamination mixtures: the $L^2$ strategy }    
\runtitle{Another $\mathbb{L}^2$ look at two-component contamination mixture}

\begin{aug}
\author{\fnms{S\'ebastien} \snm{Gadat}\ead[label=e1]{sebastien.gadat@math.univ-toulouse.fr}} 
\and 
\author{\fnms{Jonas} \snm{Kahn}\ead[label=e4]{jonas.kahn@math.univ-toulouse.fr}} 
\and
\author{\fnms{Cl\'ement} \snm{Marteau}\ead[label=e3]{marteau@math.univ-lyon1.fr}}
\and
\author{\fnms{Cathy} \snm{Maugis-Rabusseau}\ead[label=e2]{cathy.maugis@insa-toulouse.fr}}

\runauthor{Gadat et al.}

\affiliation{}
\address{Toulouse School of Economics\\ 
 Universit\'e Toulouse 1 - Capitole.\\
21 all\'ees de Brienne,\\ 31000 Toulouse, France.\\
\printead{e1}\\}

\affiliation{}
\address{Institut de Math\'ematiques de Toulouse ; UMR5219\\
Universit\'e de Toulouse ; CNRS\\
UPS, F-31062 Toulouse Cedex 9, France\\
\printead{e4}\\}

\affiliation{}
\address{Univ Lyon, Universit\'e Claude Bernard Lyon 1\\
CNRS UMR 5208, Institut Camille Jordan\\
F-69622 Villeurbanne, France. \\
\printead{e3}\\}

\affiliation{}
\address{Institut de Math\'ematiques de Toulouse ; UMR5219\\
Universit\'e de Toulouse ; CNRS\\
INSA, F-31077 Toulouse, France\\
\printead{e2}\\}
\end{aug}

\begin{abstract}
In this paper, we consider a parametric density contamination model. We work with a sample of i.i.d. data with a common density, $f^\star =(1-\lambda^\star) \phi + \lambda^\star \phi(.-\mu^\star)$, where the shape $\phi$ is assumed to be known. We establish the optimal rates of convergence for  the estimation of the mixture parameters $(\lambda^\star,\mu^\star)\in (0,1)\times \mathbb{R}^d$.  In particular, we prove that the classical parametric rate $1/\sqrt{n}$ cannot be reached when at least one of these parameters is allowed to tend to $0$ with $n$. 
\end{abstract}

\begin{keyword}[class=AMS]
\kwd[Primary ]{62G05}
\kwd{62F15}
\kwd[; secondary ]{62G20}
\end{keyword}

\begin{keyword}
\kwd{$\mathbb{L}^2$ contrast, parameter estimation, rate of convergence, two-component contamination mixture model}
\end{keyword}

\end{frontmatter}

\section{Introduction}
Because of their wide range of flexibility, finite mixtures are a popular tool to model the unknown distribution of heterogeneous data. They are found in several domains and have been at the core of several mathematical investigations. For a complete introduction to mixtures, we refer the reader to \cite{McLP_2000} and \cite{FS_2006}. In most cases of interest, a sample $\mathcal{S}_n:=(X_1,\ldots, X_n)$ of i.i.d. data is at our disposal, and each entry admits the probability density $f^\star$ w.r.t. the Lebesgue measure. For a finite mixture model, the density $f^\star$ is assumed to have the following shape:
\begin{equation}
f^\star = \sum_{k=1}^K \lambda_k \phi_k.
\label{eq:densiteK}
\end{equation}
With such a representation, the population of interest can in some sense be decomposed into $K$ different groups where each group $k$ has a proportion $\lambda_k$ and is distributed according to the density $\phi_k$. For practical purposes, parametric models are often considered. In such cases, the densities $\phi_k$ are assumed to be known, at least up to some finite parameters, and the parameter estimation problem is often addressed using an EM-type algorithm \cite{DLR_1977}. In contrast, with the impressive range of applications based on mixtures, theoretical issues related to mixture models are somewhat poorly understood.

Among the available theoretical results for mixtures, some of them are particularly linked to the density estimation problem. The works \cite{GW_2000}, \cite{GV_2001} and \cite{KRV_2010}
develop a nonparametric Bayesian point of view, while exploiting both the approximation capacity of mixtures and their metric entropy size, first with Gaussian distributions and later with exponential power distributions. A Gaussian mixture estimator based on a non asymptotic penalized likelihood criterion is proposed in \cite{MM_2011} and the adaptive properties of this estimator are investigated in \cite{MM_2013}. 
 
In the mixture models, the focus on the parameters themselves has received less theoretical attention because of their great mathematical difficulty despite their natural interest. It is indeed highly informative to obtain the estimation of the mixing distribution, and many applied works use this estimation for descriptive statistics. Among them, the unsupervised clustering with Bayesian interpretation is certainly one of the most widely used applications of mixtures (see, \textit{e.g}, \cite{McLP_2000}). Given a dictionary of densities, \cite{BTWB_2010} propose an estimation procedure based on the minimization of an $\mathbb{L}^2$ empirical criterion with a sparsity constraint, providing an estimation of the parameters of interest when the location parameters $\mu_k^\star$ (here $\phi_k=\phi(.-\mu_k^\star)$) are not too close to each other.  \cite{Chen_95} studied the estimation of the mixing distribution under a strong identifiability condition. As observed in the recent works of \cite{Nguyen_2013}, \cite{ho2016} and  \cite{KH15}, the optimal rate depends on the knowledge of  the number of components. \cite{Ho:2015aa} show that the parameter estimation rates are slower for some weakly identifiable mixtures. Other extensions are available in \cite{ho2016}. Identifiability (and estimation) issues are discussed in \cite{Hunter} under the assumption that the $\phi_k$ can be written as $\phi_k = \phi(.-\mu_k)$ for some sequence $(\mu_k)_{k=1..K}$ and a symmetric probability density $\phi$. 

Finally, the EM algorithm (see, \textit{e.g.}, \cite{DLR_1977}) is a popular alternative for the analysis of the latent structures involved in the mixture models, but the analysis of the convergence rate of the final estimator is somewhat intricate. A first positive result about the \textit{convergence} of this method is given in \cite{W_83} when the density is unimodal and certain smoothness conditions hold. However, when multimodality occurs, the behavior of the EM method remains mysterious and is suspected to fall into local traps of the log-likelihood. Some recent advances in the analysis of this famous method were brought by \cite{BWY_16}, where a general result is given for a convergence of the sample-based EM towards the population one, up to initialization, Lipschitz and concavity conditions. 

In this paper, we focus on the multivariate parameter estimation problem when the density of interest is a two-component contamination mixture:
$$
f^\star = (1-\lambda^\star) \phi + \lambda^\star \phi(.-\mu^\star),
$$
where the density $\phi$ is \textit{known} and the parameters $(\lambda^\star,\mu^\star) \in (0,1) \times \mathbb{R}^d$ are to be estimated. This model is a particular case of the Huber contamination model (\cite{Huber64}).

The estimation of the couple $(\lambda^\star,\mu^\star)$ has already been considered in the literature. In \cite{BMV_2006}, a slightly different model is considered where $f^\star = (1-\lambda^\star) \phi(.-\mu_1^\star) + \lambda^\star \phi(.-\mu_2^\star)$ and $\phi$ is assumed to be symmetric and unknown. Using a recurrence procedure based on an inversion formula, they propose an estimator for $\theta^\star = (\lambda^\star, \mu_1^\star, \mu_2^\star)$ and the function $\phi$. In particular, the parameter $\lambda^\star$ is estimated at the `classical' parametric rate $1/\sqrt{n}$, while the rate $n^{-1/4}$ is obtained for location parameters $(\mu_1^\star, \mu_2^\star)$. A similar problem is addressed in \cite{BV_2014} where the rate $1/\sqrt{n}$ is reached for the estimation of the whole parameter $\theta^\star$. The estimation procedure is based on a computation of an empirical Fourier transform. More recently, \cite{Patra_Sen} considered the situation where the distribution of one of the component of the mixture is known. In such a case, they provide an estimator of both the mixing parameter and of the distribution of the second component. In the setting considered here (i.e., when $f^\star$ is a two-component contamination mixture),
\cite{CJL_2007} proposes an iterative procedure based on the empirical distribution function. In the so-called \textit{sparse} setting where\footnote{All the notation used in this paper are made precise at the end of this section.} $\lambda^\star \ll 1/\sqrt{n}$ and $\mu^\star \sim \sqrt{2r\log(n)}$ for some $r\in (0,1)$ as $n\rightarrow +\infty$, the authors derive rates of convergence for the estimation of $\lambda^\star$. In particular, they prove that the classical parametric rate cannot be attained in such a setting. 

In all the aforementioned contributions except \cite{CJL_2007}, it is implicitly assumed that both location and proportion parameters are fixed with respect to $n$. The main aim of this paper is to fill this gap. We propose a procedure inspired by \cite{BTWB_2010} and derive an estimator $(\hat\lambda_n,\hat\mu_n)$ for the couple $(\lambda^\star,\mu^\star)$. This estimator is based on the minimization of an $\mathbb{L}^2$ contrast instead of a usual maximum likelihood estimator of mixture parameters computed with an EM-type algorithm. Then, given a bound $M$ s.t. $\max_{j=1\dots d}|\mu_j^\star |\leq M$ and under mild assumptions on the shape $\phi$, we prove that:
\begin{equation}
\sup_{(\lambda^\star,\mu^\star)\in (0,1)\times [-M,M]^d} \mathbb{E}_{\lambda^\star,\mu^\star} [ (\lambda^\star)^2 \|\mu^\star\|^2 \|\hat\mu_n-\mu^\star\|^2] \lesssim \frac{\log^2n}{n},
\label{eq:result1}
\end{equation}
and 
\begin{equation}
\sup_{\stackrel{(\lambda^\star,\mu^\star)\in (0,1)\times [-M,M]^d }{\lambda^\star \|\mu^\star\|^2 \gtrsim n^{-1/2}}} \mathbb{E}_{\lambda^\star,\mu^\star} [ \|\mu^\star\|^4 (\hat\lambda_n-\lambda^\star)^2] \lesssim \frac{\log^2n}{n},
\label{eq:result2}
\end{equation}
These results are completed by the corresponding lower bounds that ensure the optimality of \eqref{eq:result1} and \eqref{eq:result2}, up to logarithmic factors. In particular, we can immediately observe that the parametric rate of $1/\sqrt{n}$ is attained when $\lambda^\star$ and $\mu^\star$ are fixed, but is deteriorated as soon as these parameters are allowed to tend to $0$ with $n$. 

Finally, we also obtain an interesting link between the $\mathbb{L}^2$ loss and the Wasserstein loss in our contamination mixture model:
\begin{equation}\label{eq:contriblw}
\|f_{\lambda,\mu}-f_{\lambda',\mu'}\|_2  
\geq c_\phi W_2^{2}(G_{\lambda,\mu},G_{\lambda',\mu'}),
\end{equation}
where the Wasserstein ($L^p$)-transportation distances between two probability measures $m_1$ and $m_2$ on $\Omega $ are defined by
\begin{equation}
\label{defW}
    W_p(m_1, m_2)  := \left[\inf_{\pi \in \Pi(m_1,m_2)} \int d^p(x,y) \mathrm{d}\pi(x,y)\right]^{\frac 1 p},
\end{equation}
$\Pi(m_1,m_2)$ being the set of probability measures on $\Omega \times \Omega$ such that their marginals are $m_1$  and $m_2$; and $G_{\lambda, \mu}  = (1 - \lambda) \delta_0 + \lambda \delta_{\mu}$ is the mixing distribution associated to the density $f_{\lambda,\mu}$, where $\delta_{\theta}$ is the Dirac peak at $\theta$. 
This makes even more explicit the hardness of recovering the unknown parameters of the contamination mixture model.

The paper is organized as follows. First, a preliminary oracle inequality for $\mathbb L^2$ density estimation is established in Section~\ref{s:model}. 
On the basis of this result, some rates of convergence for the estimation of $(\lambda^\star,\mu^\star)$ are deduced (see Section~\ref{sub:cvrates}) under some assumptions on $\phi$ presented in Section~\ref{sec:baseline}. 
Some lower bounds are provided in Section~\ref{s:lb}, first in a strong contamination model ($\|\mu^\star\|>m$ with $m$ independent of $n$; see Section~\ref{sub:lb1}); and second, in a weak contamination model ($\|\mu\|$ can tend to $0$ when $n\to +\infty$; see Section~\ref{sub:lb2}).
The main part of the paper ends with a discussion in Section~\ref{sec:discussion} that reveals several insights between Wasserstein distances among mixing distributions and distances between the probability distributions. A few simulations are presented in Section~\ref{sec:example_distribution}.
 Proofs of the upper bounds (resp. lower bounds) are given in Section~\ref{s:proof:ub} (resp. Appendix~\ref{sect:LWB}) while Section \ref{sec:metric_proof} provides the proof of the link between some Wasserstein transportation cost among mixing distributions and the $\mathbb{L}^2$ loss.
Technical results are presented in Appendix~\ref{s:tec}, whereas Appendix ~\ref{s:CS} is devoted to a needed refinement of the Cauchy-Schwarz inequality. 
\\

\textbf{Notation.} Above and below, we use in this paper some specific notation. For any real sequences $(u_n)_{n\in \mathbb{N}}$ and $(v_n)_{n\in \mathbb{N}}$, $u_n \ll v_n$ means that $u_n/v_n \rightarrow 0$ as $n\rightarrow +\infty$. Similarly, $u_n \sim v_n$ (resp. $u_n \lesssim v_n$ and $u_n \gtrsim v_n$) means that there exists $a,b \in \mathbb{R}^+$ such that $a v_n \leq u_n \leq b v_n$ (resp. $u_n \leq b v_n$ and $av_n \leq u_n$) for any $n\in \mathbb{N}$. For any $x \in \mathbb{R}^d$, $\| x \|$ will denote the classical euclidian norm (namely $\| x \|^2 = \sum_{j=1}^d x_j^2$) while $\| f \|_2$ will denote the $\mathbb{L}^2$ norm of any $f \in \mathbb{L}^2(\mathbb{R}^d)$ associated to the corresponding scalar product $\langle .,. \rangle$. Finally, $\mathbb{P}_\theta$ will alternatively (the meaning will be clear following the context) correspond to the measure of a single observation $X_i$ or of the whole sample $(X_1,\dots, X_n)$ associated to any mixture parameter $\theta = (\lambda,\mu)$. The associated expectation will be alternatively denoted by $\mathbb{E}_\theta$, $\mathbb{E}_{\lambda,\mu}$ or $\mathbb{E}$, according to the context.

\section{A preliminary result on $\mathbb{L}^2$ density estimation}
\label{s:model}

\subsection{Statistical setting and identifiability}
\label{s:setting}

We recall that we have at our disposal an i.i.d. sample of size $n$ denoted $\mathcal{S}_n:=(X_1,\ldots, X_n)$, where the distribution of each $X_i$ is associated with a two-component contamination mixture model. 
More precisely, we assume that each $X_i$ admits an unknown density $f^\star$ with respect to the Lebesgue measure on $\RR^d$, which is given by:
\begin{equation}\label{eq:fmodel}
f^\star = (1-\lambda^{\star}) \phi+\lambda^{\star} \phi(.-\mu^\star).
\end{equation}
In the following text, $ \theta^\star=(\lambda^{\star},\mu^\star) \in (0,1) \times \mathbb{R}^d$ refers to the \textit{parameters} of the two-component contamination mixture model. We assume that the density $\phi$ is a \textit{known} function and that a real contamination of this baseline density $\phi$ occurs ($\lambda^\star>0$). Finally, we assume that the unknown contamination shift $\mu^{\star}$ belongs to a bounded interval $[-M,M]^d$ where $M>0$ is known.\\

Here and below, for any $\theta=(\lambda,\mu) \in (0,1)\times \mathbb{R}^d$, we write: 
$$ f_\theta = f_{\lambda,\mu} = (1-\lambda) \phi + \lambda\phi_\mu,$$
where $\phi_{\mu}$ is defined according to the standard notation in location models:
$$
\forall \mu \in \RR^d \qquad \phi_{\mu}:x \longmapsto \phi(x-\mu). 
$$
In particular, as a slight abuse of notation, we write $f^\star = f_{\theta^\star}=f_{\lambda^\star,\mu^\star}$ and (when the meaning is clear following the context) $\hat f = f_{\hat \theta} = f_{\hat\lambda,\hat\mu}$ for any estimator $\hat\theta$ of $\theta^\star$. \\

We aim to recover the unknown parameter $\theta^\star$ from the sample $\mathcal{S}_n$. This might be possible according to the next identifiability result, whose proof is given in Appendix~\ref{s:tec}.
\begin{prop}\label{prop:id}
Any two-component contamination mixture model is identifiable: $f_{\theta_1}=f_{\theta_2}$ if and only if $\theta_1=\theta_2$.
\end{prop}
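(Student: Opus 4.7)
The plan is to pass to Fourier space and exploit the analytic structure of the resulting equation. Since $\phi$ is an integrable density and translation by $\mu$ becomes multiplication by $e^{-i\langle \mu,\xi\rangle}$ in Fourier space, the characteristic function factorizes as
\[
\widehat{f_\theta}(\xi) \;=\; \widehat{\phi}(\xi)\,\bigl[(1-\lambda)+\lambda\, e^{-i\langle \mu,\xi\rangle}\bigr].
\]
Because $\phi$ is a density, the Riemann--Lebesgue lemma gives that $\widehat{\phi}$ is continuous with $\widehat{\phi}(0)=1$, hence $\widehat{\phi}$ does not vanish on some open neighborhood $U$ of the origin.

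Assuming $f_{\theta_1}=f_{\theta_2}$, I would equate Fourier transforms and divide by $\widehat{\phi}$ on $U$, which after elementary rearrangement yields
\[
\lambda_1\bigl(1-e^{-i\langle\mu_1,\xi\rangle}\bigr)=\lambda_2\bigl(1-e^{-i\langle\mu_2,\xi\rangle}\bigr),\qquad \xi\in U.
\]
Both sides are entire functions of $\xi\in\mathbb{C}^d$, so by analytic continuation the identity persists on all of $\mathbb{R}^d$.

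I would then extract the parameters by Taylor-expanding at the origin. Matching the coefficients of $\xi_j$ gives the vector identity $\lambda_1\mu_1=\lambda_2\mu_2$, and matching the coefficients of $\xi_j\xi_k$ gives the matrix identity $\lambda_1\mu_1\mu_1^{\top}=\lambda_2\mu_2\mu_2^{\top}$. In the nondegenerate case $\mu_1\neq 0$, the first identity forces $\mu_2=(\lambda_1/\lambda_2)\mu_1$; inserting this into the matrix identity and cancelling $\mu_1\mu_1^{\top}\neq 0$ produces $\lambda_1=\lambda_1^2/\lambda_2$, i.e.\ $\lambda_1=\lambda_2$, and therefore $\mu_1=\mu_2$.

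The single delicate point, which I would treat separately, is the degenerate configuration $\mu_1=0$: the Fourier identity then forces $\lambda_2\bigl(1-e^{-i\langle\mu_2,\xi\rangle}\bigr)\equiv 0$, so $\phi_{\mu_2}=\phi$, and an $L^1$ density cannot be invariant under a nontrivial translation (iterating such an invariance contradicts integrability), hence $\mu_2=0$. This reduces both parameters to the pure baseline $\phi$, so the identifiability statement is consistent with the standing genuine-contamination convention of the paper. This edge case is the only real obstacle; the bulk of the argument is a routine Fourier computation once the factorization of $\widehat{f_\theta}$ is in hand.
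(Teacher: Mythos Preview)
Your proof is correct and shares the same opening move as the paper: pass to Fourier space, use continuity of $\widehat\phi$ with $\widehat\phi(0)=1$ to cancel it on an open set, and then exploit analyticity of the exponential to extend the resulting identity between trigonometric polynomials.

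Where you diverge from the paper is in how you read off the parameters. The paper takes the imaginary part to obtain $\lambda_1\sin(\xi\!\cdot\!\mu_1)=\lambda_2\sin(\xi\!\cdot\!\mu_2)$ and then runs a coordinate-by-coordinate argument, using the linear independence of $\xi_1\mapsto\sin(\alpha\xi_1)$ for distinct $|\alpha|$ to peel off one component of $\mu$ at a time; positivity of the $\lambda_i$ is used to rule out the sign ambiguity $\mu_1^{(j)}=-\mu_2^{(j)}$. Your route is more algebraic: analytic continuation brings the identity back to a full neighborhood of the origin, and matching first- and second-order Taylor coefficients gives directly $\lambda_1\mu_1=\lambda_2\mu_2$ and $\lambda_1\mu_1\mu_1^{\top}=\lambda_2\mu_2\mu_2^{\top}$, from which $\lambda_1=\lambda_2$ and $\mu_1=\mu_2$ follow by elementary manipulation. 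Your argument is shorter, avoids the recursion over coordinates, and makes the structure (matching of the first two ``moments'' of the mixing measure) more transparent; the paper's argument, on the other hand, does not require working specifically near the origin. You are also explicit about the degenerate case $\mu_1=0$ (where $\lambda$ is genuinely non-identifiable), which the paper's proof leaves implicit; your handling of it via the impossibility of a nontrivial translation-invariant $L^1$ density is fine, though noting that $e^{-i\langle\mu_2,\xi\rangle}\equiv 1$ already forces $\mu_2=0$ would be even quicker.
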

Such an identifiability result is well known in some more general cases up to additional assumptions on the baseline density $\phi$ (see, \textit{e.g.}, \cite{Hunter} or Theorem 2.1 of \cite{BMV_2006} where the symmetry of $\phi$ is added to ensure the identifiability of the general mixture model without contamination). Here, the fact that one of the components of the mixture is constrained to be centered makes it possible to get rid of any additional assumption on $\phi$. In particular, Proposition \ref{prop:id} holds as soon as $\phi$ is non-negative with $\int_{\RR^d} \phi = 1$.

\subsection{Estimation strategy and oracle inequality on the $\LL^2$ norms}

Our estimator will be built according to an optimal $\LL^2$ density estimation constrained to the contamination models. For this purpose, we first  define a grid over the possible values of $\lambda$ and $\mu$ through:
$$
\MM  := \left\{ (\lambda,\mu) \,:  \lambda \in \Lambda=\{\lambda_1,\ldots,\lambda_{p}\} \, \ \text{and} \, \ \mu\in \mathfrak{M} =\{\mu_1,\ldots,\mu_{q}\}\right\},
$$
where $\Lambda,\mathfrak{M}$ will depend on $n$ to obtain good properties both from the statistical and approximation point of view. 
To obtain a good estimation of $f^\star$ and $\theta^\star$, we adopt a SURE  approach (see, \textit{e.g.}, \cite{SURE}) and  choose an estimator that  minimizes
$ \|f^\star-f_{\lambda,\mu}\|_2^2$ over the grid $\MM$. Observing that:
$$\|f^\star-f_{\lambda,\mu}\|_2^2-\|f^\star\|_2^2 = - 2 \langle f^\star,f_{\lambda,\mu}\rangle + 
\|f_{\lambda,\mu}\|_2^2,$$
and since $\|f^\star\|_2^2$ does not depend on $(\lambda,\mu)$, it is natural to introduce the following contrast function:
$$
\forall (\lambda,\mu)  \in \MM \qquad 
\gamma_n(\lambda,\mu) := -\frac{2}{n} \sum_{i=1}^n f_{\lambda,\mu}(X_i)+\|f_{\lambda,\mu}\|_2^2,
$$
leading to the estimator:
\begin{equation}
(\hat \lambda_n,\hat \mu_n) = \arg\min_{(\lambda,\mu) \in \MM} \gamma_n(\lambda,\mu).
\label{eq:estimator}
\end{equation}
Our first main result, stated below, quantifies the performances of $\hat f_n :=f_{\hat \lambda_n,\hat \mu_n}$. 

\begin{theo}
\label{th:oracle}
Let $(\lambda^\star, \mu^\star)\in (0,1)\times \RR^d$. Let $(\hat\lambda_n, \hat\mu_n)$ be the estimator defined in (\ref{eq:estimator}). Then, a positive constant $\mathcal{C}$ exists such that for all $0<\alpha < 1$:
\begin{equation}\label{eq:oracle}\mathbb{E}\left[ \| \hat f_n - f^\star \|_2^2 \right] \leq \left( \frac{1+\alpha}{1-\alpha} \right) \inf_{(\lambda,\mu) \in \MM} \| f_{\lambda,\mu} - f^\star \|_2^2 +  \frac{\mathcal{C} }{2\alpha}\frac{\log^2(|\MM|)}{n}, \end{equation}
where $|\MM|$ corresponds to the cardinality of the grid $\MM$. 
\end{theo}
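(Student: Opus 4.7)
The strategy is the classical Birgé--Massart model selection recipe, adapted to our finite grid $\MM$. First, I would rewrite
$$
\gamma_n(\theta)+\|f^\star\|_2^2 \;=\; \|f_\theta-f^\star\|_2^2 \;-\; \nu_n(f_\theta),
$$
where $\nu_n(g):=\tfrac{2}{n}\sum_{i=1}^n\bigl(g(X_i)-\mathbb{E}g(X_1)\bigr)$ denotes the centered, rescaled empirical process. The very definition $\gamma_n(\hat\theta)\leq \gamma_n(\theta)$ immediately yields, for every $\theta\in\MM$, the starting inequality
$$
\|f_{\hat\theta}-f^\star\|_2^2 \;\leq\; \|f_\theta-f^\star\|_2^2 \;+\; \nu_n(f_{\hat\theta}-f_\theta).
$$
Everything then reduces to controlling the stochastic residual $\nu_n(f_{\hat\theta}-f_\theta)$, despite the fact that $\hat\theta$ is data-dependent.

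Since $\hat\theta$ takes values in the finite grid $\MM$, the natural route is to bound the residual by the supremum over a second free index $\theta'\in\MM$, and then apply Bernstein's inequality to each centered sum $\nu_n(f_{\theta'}-f_\theta)$. Two ingredients are at hand: the envelope $\|f_{\theta'}-f_\theta\|_\infty \leq 2\|\phi\|_\infty$ (since each $f_\tau$ is bounded by $\|\phi\|_\infty$), and the variance bound
$$
\mathrm{Var}\bigl((f_{\theta'}-f_\theta)(X_1)\bigr) \;\leq\; \int (f_{\theta'}-f_\theta)^2 f^\star \;\leq\; \|\phi\|_\infty\,\|f_{\theta'}-f_\theta\|_2^2.
$$
A union bound over the $|\MM|^2$ ordered pairs then produces, with probability $\geq 1-2|\MM|^{-2}$ and simultaneously for every $(\theta,\theta')\in\MM^2$,
$$
\bigl|\nu_n(f_{\theta'}-f_\theta)\bigr| \;\lesssim\; \|f_{\theta'}-f_\theta\|_2\sqrt{\frac{\log|\MM|}{n}} \;+\; \frac{\log|\MM|}{n}.
$$

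The quadratic form needed in the oracle inequality is then obtained by the elementary AM-GM inequality $2ab\leq \alpha a^2+b^2/\alpha$ applied with $a=\|f_{\theta'}-f_\theta\|_2$, yielding
$$
\bigl|\nu_n(f_{\theta'}-f_\theta)\bigr| \;\leq\; \alpha\,\|f_{\theta'}-f_\theta\|_2^2 \;+\; \frac{C}{\alpha}\,\frac{\log^2|\MM|}{n}.
$$
The triangle inequality $\|f_{\theta'}-f_\theta\|_2^2\leq 2\|f_{\theta'}-f^\star\|_2^2+2\|f_\theta-f^\star\|_2^2$, combined with the specialization $\theta'=\hat\theta$ in the starting inequality of Step~1, allows to absorb the term $\alpha\|f_{\hat\theta}-f^\star\|_2^2$ on the left-hand side. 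After rearrangement and a harmless rescaling of $\alpha$, one obtains on the high-probability event
$$
\|f_{\hat\theta}-f^\star\|_2^2 \;\leq\; \frac{1+\alpha}{1-\alpha}\,\|f_\theta-f^\star\|_2^2 \;+\; \frac{C'}{\alpha}\,\frac{\log^2|\MM|}{n}.
$$
The bound \eqref{eq:oracle} then follows by infimizing over $\theta\in\MM$.

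The last step is to pass to expectation. On the exceptional event of probability $\lesssim |\MM|^{-2}$ I would use the deterministic estimate $\|f_{\hat\theta}-f^\star\|_2^2 \leq 4\|\phi\|_\infty$ (coming from $\|f_\tau\|_2^2\leq \|\phi\|_\infty\int f_\tau =\|\phi\|_\infty$), whose contribution is absorbed in the stochastic remainder since $|\MM|^{-2}\ll \log^2|\MM|/n$ in the regime of interest. The one honest obstacle of the proof is exactly the uniform empirical-process control over the \emph{random} index $\hat\theta$: it forces the union bound over $|\MM|^2$ pairs, calls for the variance-sensitive Bernstein rather than a blunt Hoeffding (so as to recover a quadratic form in $\|f_{\theta'}-f_\theta\|_2$), and demands a careful tracking of constants to land precisely on the bias inflation factor $(1+\alpha)/(1-\alpha)$. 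The remaining bookkeeping is routine.
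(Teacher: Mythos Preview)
Your proposal is essentially correct and close in spirit to the paper's argument: both rely on a Bernstein/Bennett concentration inequality for the centered process indexed by the finite grid, followed by Young's inequality $2ab\le\alpha a^2+\alpha^{-1}b^2$ to absorb the quadratic piece. The organizational difference is that the paper fixes the oracle point $(\lambda_0,\mu_0)\in\arg\min_{\MM}\|f_{\lambda,\mu}-f^\star\|_2^2$ from the outset, works with the \emph{normalized} process $\mathcal{G}_n(\lambda,\mu)=\mathcal{E}_n(\lambda,\mu)/\|f_{\lambda,\mu}-f_{\lambda_0,\mu_0}\|_2$, and splits on the event $\mathcal{B}=\{\|\hat f-f_{\lambda_0,\mu_0}\|_2\le n^{-1/2}\}$: on $\mathcal{B}$ the conclusion is immediate, while on $\mathcal{B}^c$ the paper bounds $\mathbb{E}[\mathcal{G}_n(\hat\lambda,\hat\mu)^2\mathds{1}_{\mathcal{B}^c}]$ \emph{directly} by tail integration of the Bennett bound, summed over $|\MM|$ grid points. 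Your variant (union bound over $|\MM|^2$ pairs, then a high-probability statement) is a legitimate and arguably more standard alternative.

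There is, however, one genuine gap in your passage to expectation. The crude bound on the exceptional event contributes a term of order $\|\phi\|_\infty\cdot|\MM|^{-2}$, and your claim that this is absorbed by $\log^2(|\MM|)/n$ is \emph{not} guaranteed by the hypotheses of the theorem: the statement must hold for an arbitrary finite grid $\MM$, in particular one of fixed cardinality as $n\to\infty$, in which case $|\MM|^{-2}$ is a constant and swamps the remainder. The repair is routine but necessary: carry a free level $x\ge 0$ in Bernstein so that the exceptional probability becomes $2|\MM|^2 e^{-x}$, then integrate the tail in $x$. This is exactly what the paper's direct second-moment computation for $\mathcal{G}_n$ achieves, and it is also where the factor $\log^2|\MM|$ (rather than the $\log|\MM|$ your high-probability display would suggest) naturally enters.
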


It is worth mentioning that the result above is almost assumption-free on the two-component contamination mixture model. Nevertheless, this result implicitly requires that the approximation term $\inf_{(\lambda,\mu) \in \MM} \| f_{\lambda,\mu} - f^\star \|_2^2$ is comparable to the residual. In practice, this cannot be achieved unless we have an upper bound on the range for possible values of $\mu$ at our disposal.  The proof of Theorem \ref{th:oracle} is given in Section \ref{s:oracle}.

We stress that Theorem \ref{th:oracle} is not the main interest of our work. It is a minimal requirement to further extend our analysis on the parameter estimation of the mixture models themselves.  In particular, the following question now arises: \textit{does the fact that $\hat f_n$ is a ``good" $\mathbb{L}^2$ estimator of $f^\star$ imply that the corresponding $\hat\theta_n$ provides a satisfying estimator of $\theta^\star$?} The positive answer to this question is the main contribution of our work and is described in the next section. In order to establish this result, some mild restrictions on the class of possible densities $\phi$ are required.

\section{Estimation of the parameter $\theta^\star$}

\subsection{Baseline assumptions}\label{sec:baseline}

We now introduce mild and sufficient assumptions for an optimal recovery of $\theta^\star$ from the oracle inequality (\ref{eq:oracle}) (in terms of convergence rates). In the following, we denote by $\mathcal{C}^{k}(\RR^d)$ the set of continuous functions that admits  $k$ continuous derivatives.


\paragraph{Assumption $\HS$} \textit{The density $\phi$ belongs to $\mathcal{C}^3(\RR^d) \cap \LL^2(\RR^d)$}.

The set of admissible densities considered in Assumption $\HS$ is very large, and contains many possible distributions (Gaussian, Cauchy, Gamma  to name a few).   Note that it is also possible to relax the smoothness assumption and handle piecewise differentiable densities with an additional symmetry assumption (see  Appendix~\ref{s:tec}). Note that since the density $\phi$ is continuous and in $\LL^2(\RR^d)$, this density is necessarily bounded on $\RR^d$.

Our second important assumption is concerned with a tight link that may exist between $\phi-\phi_{\mu}$ and $\mu$ itself. It requires a type of Lipschitz upper bound in the translation model.

\paragraph{Assumption $\HL$} \textit{The density $\phi$ satisfies:}
\begin{equation}\label{eq:glip}
\exists \, g \in \LL^2(\RR^d) \quad \forall x \in \RR^d \quad \forall \mu \in [-M,M]^d \qquad |\phi(x)-\phi_{\mu}(x)| \leq \|\mu\| g(x),
\end{equation}
\textit{and $g$ satisfies the integrability condition:}
$$
\mathcal{J}:= \int_{\RR^d} g^{2}(x) \phi^{-1}(x) dx< + \infty.
$$
This assumption will be of primary importance to obtain estimation results on the parameters of the mixture themselves. In particular, it will make it possible to derive a relationship between  the $\LL^2$ norm  of $\phi-\phi_{\mu}$ and the size of $\|\mu\|$. Hence, under Assumption $\HL$, a good estimation of the density $f^{\star}$ for the $\LL^2$ norm is assumed to yield a good estimation of the mixture parameters.

\begin{rem}
\label{rem:1}
 Instead of listing all the possible densities that both meet Assumptions $\HS$, $\HL$  (and later $\HD$ introduced in Section~\ref{sub:lb2} 
 for our lower bound results), we will show that \emph{any log-concave} distribution $\phi$ written as: 
$$
 \phi(.)=e^{-u(.)}  \quad \text{with} \; u \text{ convex such that} \quad  \|\nabla u\|+\|D^2 u\|=o_{ \infty}(u) ,
$$ 
satisfies these three  conditions\footnote{Hereafter $o_{\infty}(u)$ denotes a quantity negligible compared to $u(x)$ as $\| x\| \rightarrow +\infty$}. The relationships between $\HS,\HL$, $\HD$ and the log-concave distributions are given in Appendix \ref{s:log_concave}.  
\end{rem}

\begin{rem}
An easy consequence of Remark \ref{rem:1} (see also Proposition \ref{prop:lc}) is that the log-concave Gaussian   distributions satisfy assumptions $\HS$ and  $\HL$  so that all the results displayed below apply to these situations. It may be shown as well that our results apply for the Laplace distribution since the smoothness assumption $\HS$ may be replaced by a symmetry property (see  Appendix~\ref{s:tec}).

In the $1$-dimensional Cauchy distribution case, we can compute $\phi-\phi_{\mu}$:
$$
\left| \phi(x)-\phi_{\mu}(x)\right| = |\mu| \frac{|2x-\mu |}{\pi [1+(x-\mu)^2][1+x^2]} \leq C \phi(x) |\mu|,
$$
for a large enough constant $C$. Hence, the assumptions  $\HS$ and $\HL$ are satisfied with $g = C \phi$ for the Cauchy distribution.

The skew Gaussian density\footnote{It is defined as $\phi(.) = 2\psi(.)\Psi(\alpha .)$ where $\psi$ and $\Psi$ denote respectively the density and cumulative function of a standard Gaussian distribution, and $\alpha$ an asymmetry parameter.} $\phi$ satisfies:
$$
\left| \phi(x)-\phi_{\mu}(x)\right| \leq 2 \psi(x) \left| \Psi(\alpha x ) - \Psi(\alpha(x-\mu))\right| + 2 \Psi(\alpha (x-\mu))
\left| \psi( x ) - \psi(x-\mu)\right|.
$$
If we define $g$ as $g(x) :=  4 \sup_{[x-M;x+M]} \psi(t) \times \sup_{[x-M;x+M]} \Psi(\alpha t) $, we can check that $\HS$ and $\HL$ are satisfied. In particular, the integrability condition $\HL$ is satisfied for large $x$ because $\Psi(\alpha x) \longrightarrow 1$ when $x \longrightarrow + \infty$. Conversely, if $x \longrightarrow - \infty$,  we have:
\begin{eqnarray*}
g^2(x) \phi^{-1}(x)& \lesssim &\left[ \psi^{-1}(x) \Psi^{-1}(\alpha x) \right]  \sup_{[x-M;x+M]} \psi^2(t) \times \sup_{[x-M;x+M]} \Psi^2 (\alpha t) \\ &\lesssim & \left[ \alpha x  e^{x^2/2} e^{\alpha^2 x^2/2} \right] e^{-(x-M)^2} \times e^{-\alpha^2 (x-M)^2} [\alpha(x-M)]^{-2}\\
& \lesssim & e^{-(x-2M)^2/4}  e^{-\alpha^2 (x-2M)^2/4},
\end{eqnarray*}
which leads to the integrability condition around $-\infty$.
\end{rem}

In the following text, we maintain a formalism that uses the two assumptions of Section \ref{sec:baseline} for the sake of generality.

\subsection{Consistency rates on the parameters $(\lambda^\star,\mu^\star)$}
\label{sub:cvrates}

We now use our assumptions on $\phi$ to deduce some rates of convergence for the estimation of the couple $(\lambda^\star,\mu^\star)$ from the oracle inequality of Theorem \ref{th:oracle}. According to the assumption $\mu^\star \in  [-M,M]^d$ for some given $M>0$, we define the grid $\mathcal{M}_n = \MM$ as:
\begin{eqnarray}
\lefteqn{\mathcal{M}_n = \left\lbrace (\lambda,\mu): \ \lambda=\frac{i}{\sqrt{n}}, \mu= (\mu^{(1)},\ldots,\mu^{(d)})  \quad \text{with} \quad \mu^{(j)} = \pm \frac{k_j}{\sqrt{n}} \right.}\nonumber\\
& & \hspace{0.5cm}  \left. \mathrm{where} \quad i \in \lbrace 1,\dots, \sqrt{n} \rbrace, \, j \in \lbrace 1,\dots, d \rbrace, \, k_j\in \lbrace  1,\dots, M\sqrt{n} \rbrace  \large\right\}, \label{eq:grille}
 \end{eqnarray}
so that the approximation term $\inf_{(\lambda,\mu)\in \mathcal{M}_n} \|f_{\lambda,\mu}-f^\star\|_2^2$ in Equation \eqref{eq:oracle} can be made lower than $n^{-1}$, while keeping the size of $\log(|\mathcal{M}_n|)$ reasonable and of order $d \log(n)$.
The next result, whose proof is given in Section \ref{s:preuvemu}, explicitly gives a non-asymptotic consistency rate of the estimation of $\mu^{\star}$ in terms of the sample size $n$, of the amount of  contamination $\mu^\star$, and of the probability  $\lambda^\star$ of this contamination itself.

\begin{theo}
\label{thm:vitesse_mu}  Let $(\hat \lambda_n,\hat \mu_n)$ be the estimator defined in \eqref{eq:estimator} with $\mathcal{M}_n$ given in (\ref{eq:grille}). If $\phi$ satisfies Assumptions $\HS$ and $\HL$, a positive constant $C_1$ exists such that:
$$\forall n \in \NN \qquad  \sup_{(\lambda^\star,\mu^\star) \in (0,1) \times [-M,M]^d} \EE_{\lambda^\star,\mu^\star} \left[(\lambda^\star \| \mu^\star\| )^2 \| \hat \mu_n - \mu^\star\| ^2 \right] \leq \frac{C_1 \log^{2}n}{n}.$$
\end{theo}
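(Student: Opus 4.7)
My strategy is to combine the $\mathbb{L}^2$ oracle inequality of Theorem~\ref{th:oracle} with a quantitative \emph{parameter-level} lower bound of the form $\|f_{\lambda,\mu}-f_{\lambda^\star,\mu^\star}\|_2^2 \gtrsim (\lambda^\star\|\mu^\star\|)^2\,\|\mu-\mu^\star\|^2$ holding uniformly on the parameter space. The first half of the argument controls the $\mathbb{L}^2$ loss, while the second converts it into the desired parameter loss.

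\emph{Step 1 (controlling $\mathbb{E}\|\hat f - f^\star\|_2^2$).} Apply Theorem~\ref{th:oracle} with $\alpha=1/2$. By construction of the grid \eqref{eq:grille}, $|\mathcal{M}_n|\leq \sqrt{n}\,(2M\sqrt{n})^d$, hence $\log^2|\mathcal{M}_n|\lesssim d^2\log^2 n$. For the bias term, pick the closest point $(\lambda_0,\mu_0)\in\mathcal{M}_n$ to $(\lambda^\star,\mu^\star)$; then $|\lambda_0-\lambda^\star|\leq 1/\sqrt n$ and $\|\mu_0-\mu^\star\|\leq\sqrt{d/n}$. Writing
\[
f_{\lambda_0,\mu_0}-f^\star \;=\; (\lambda_0-\lambda^\star)(\phi_{\mu_0}-\phi)\;+\;\lambda^\star(\phi_{\mu_0}-\phi_{\mu^\star}),
\]
and applying the Lipschitz bound \eqref{eq:glip} of $\HL$ to each term yields $\|f_{\lambda_0,\mu_0}-f^\star\|_2\lesssim\sqrt{d/n}$. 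Consequently, $\mathbb{E}\|\hat f-f^\star\|_2^2\lesssim \log^2 n/n$, uniformly in $\theta^\star$.

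\emph{Step 2 (from $\mathbb{L}^2$ to parameters).} The critical ingredient is the inequality
\[
\|f_{\lambda,\mu}-f_{\lambda^\star,\mu^\star}\|_2^2 \;\gtrsim\; \bigl(\lambda^\star\|\mu^\star\|\bigr)^2\,\|\mu-\mu^\star\|^2
\]
for every $(\lambda,\mu)\in(0,1)\times[-M,M]^d$. I would establish it by decomposing
\[
f_{\lambda,\mu}-f^\star \;=\; -(\lambda-\lambda^\star)(\phi-\phi_\mu)\;+\;\lambda^\star(\phi_\mu-\phi_{\mu^\star}),
\]
and then isolating, via the refinement of the Cauchy--Schwarz inequality given in Appendix~\ref{s:CS}, the component of $\lambda^\star(\phi_\mu-\phi_{\mu^\star})$ that is $\mathbb{L}^2$-orthogonal to $(\phi-\phi_\mu)$. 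The $\mathcal{C}^3$ smoothness ensured by $\HS$ allows a second-order Taylor expansion $\phi_\nu = \phi - \nabla\phi\cdot\nu + \tfrac12\nu^T\nabla^2\phi\,\nu + O(\|\nu\|^3)$; to first order $f_{\lambda,\mu}\approx \phi-\lambda\nabla\phi\cdot\mu$, so the mixture depends only on the product $\lambda\mu$. This first-order degeneracy is precisely what makes $\phi-\phi_\mu$ and $\phi_\mu-\phi_{\mu^\star}$ nearly collinear and prevents the naive Cauchy--Schwarz from producing anything better than $\lambda^{\star 2}\|\mu-\mu^\star\|^2$. Exploiting the second-order term on the constraint $\lambda\mu=\lambda^\star\mu^\star$, the leading residual is $\tfrac12\bigl(\lambda\mu^T\nabla^2\phi\,\mu-\lambda^\star(\mu^\star)^T\nabla^2\phi\,\mu^\star\bigr)$, which (after the first-order projection) collapses to a quantity proportional to $\lambda^\star\|\mu^\star\|\cdot\|\mu-\mu^\star\|$. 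This is the source of the factor $\lambda^\star\|\mu^\star\|$ in the theorem. The integrability condition of $\HL$ bounds the higher-order tails uniformly.

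\emph{Conclusion and main obstacle.} Plugging the parameter-level lower bound into Step~1 at $(\lambda,\mu)=(\hat\lambda,\hat\mu)$ yields
\[
\bigl(\lambda^\star\|\mu^\star\|\bigr)^2\,\mathbb{E}\|\hat\mu-\mu^\star\|^2 \;\lesssim\; \mathbb{E}\|\hat f-f^\star\|_2^2 \;\lesssim\; \frac{\log^2 n}{n},
\]
uniformly in $(\lambda^\star,\mu^\star)\in(0,1)\times[-M,M]^d$, as required. The main obstacle is Step~2: because of the first-order non-identifiability of $(\lambda,\mu)$ along the curves $\lambda\mu=\mathrm{const}$, the usual Cauchy--Schwarz is too lossy, and one must quantitatively exploit the second-order structure of $\phi$, which is exactly the role of the refined Cauchy--Schwarz of Appendix~\ref{s:CS}. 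A minor separate issue is handling parameters close to the degenerate locus $\mu^\star=0$ or $\lambda^\star=0$: there the LHS is small anyway, and the bound follows from the trivial $\|\hat\mu-\mu^\star\|\leq 2M\sqrt d$ together with the bounds on $\lambda^\star\|\mu^\star\|$ on that region.
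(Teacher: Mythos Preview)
Your proposal is correct and follows essentially the same route as the paper: combine the oracle inequality (Theorem~\ref{th:oracle}) with a bias bound on the grid, and then convert the $\mathbb{L}^2$ control into a parameter-level bound via the refined Cauchy--Schwarz of Appendix~\ref{s:CS}. Your Step~2 is a bit over-elaborate compared to the paper's execution: rather than reasoning through a Taylor expansion along the curve $\lambda\mu=\lambda^\star\mu^\star$, the paper simply expands $\|\hat f-f^\star\|_2^2$ from the very decomposition you wrote, bounds the cross term by Proposition~\ref{prop:CS} as $|\langle\phi-\phi_{\hat\mu},\phi_{\hat\mu}-\phi_{\mu^\star}\rangle|\le\|\phi-\phi_{\hat\mu}\|_2\|\phi_{\hat\mu}-\phi_{\mu^\star}\|_2\bigl(1-c\|\phi-\phi_{\mu^\star}\|_2^2\bigr)$, uses $2ab\le a^2+b^2$, and arrives algebraically at \eqref{eq:vit}; the factor $\|\mu^\star\|^2$ then comes from $\|\phi-\phi_{\mu^\star}\|_2^2\ge\underline\kappa\|\mu^\star\|^2$ (Proposition~\ref{prop:cnorme}). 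The Taylor heuristics you sketch are precisely the content of the \emph{proof} of Proposition~\ref{prop:CS}, not an additional step needed at the level of Theorem~\ref{thm:vitesse_mu}.
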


In the 1-dimensional case ($d=1$), an immediate consequence of Theorem \ref{thm:vitesse_mu} is that for a \textit{fixed} couple $(\lambda^\star,\mu^\star) \in ]0,1[ \times \mathbb{R}\setminus\lbrace 0\rbrace$: 
$$
 \EE_{\lambda^\star,\mu^\star} \left[\left(\frac{\hat \mu_n}{\mu^\star}-1 \right)^2 \right] \leq \frac{C_1 \log^{2}n}{n\{\lambda^\star\}^2 \{\mu^\star\}^4}.
$$
In particular, since $\mu^\star$ is allowed to tend to $0$ with $n$, the estimator $\hat \mu_n$ will be consistent as soon as $\sqrt{n}\lambda^\star \{\mu^\star\}^2 \rightarrow +\infty$ as $n\rightarrow +\infty$. In a detection context,  a two-component mixture distribution can be distinguished from that of a single component as soon as $\sqrt{n} \lambda^\star |\mu^\star| > \mathcal{C}$ for some positive constant $\mathcal{C}$ (see, e.g., \cite{CJJ_2011} or \cite{LMM_2016}).  Naturally, detection is ``easier" than estimation in the sense that the first task requires weaker conditions on the parameters of interest than the second. Since the contamination level $\mu^\star$ is assumed to be upper bounded, it is worth observing that we implicitly require that $\lambda^\star \gg 1/\sqrt{n}$ as $n\rightarrow +\infty$.   \\

Before checking the optimality of this result (see Section \ref{s:lb}), we investigate the estimation of the contamination proportion  $\lambda^{\star}$. According to the previous discussion, we will assume that $\lambda^\star \|\mu^\star\|^2$ is significantly larger than $n^{-1/2} \log^2 n $. This ensures that the contamination level $\mu^\star$ is consistently estimated. For this purpose, we introduce the set $\TThl$ indexed by a sequence $(\ell_n)_n$:
$$
\TThl:= \left\{ \theta=(\lambda,\mu) \, :  \frac{\ell_n}{\|\mu\|^2 \sqrt{n}} \leq \lambda \leq \overline{\lambda}, \|\mu\|_{\infty}\leq M \right\},
$$
for some $\bar\lambda \in (0,1)$.

\begin{theo}
\label{thm:vitesse_lambda}If $\phi$ satisfies Assumptions $\HS$ and $\HL$ and the sequence $(\ell_n)_n$ is such that $ \lim_{n \rightarrow + \infty} \frac{\ell_n}{\log n} =+\infty$,  then a positive constant $C_2$ exists such that:
$$ \sup_{(\lambda^\star,\mu^\star) \in \TThl} \EE_{\lambda^\star,\mu^\star} \left[\|\mu^\star\|^4(\hat \lambda_n - \lambda^\star)^2 \right] \leq \frac{C_2 \log^2 n}{n}.$$
\end{theo}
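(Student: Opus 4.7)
The plan is to convert the $\mathbb{L}^2$ oracle inequality of Theorem~\ref{th:oracle} into a squared bound on $\hat\lambda - \lambda^\star$, using the rate of Theorem~\ref{thm:vitesse_mu} to absorb the perturbation coming from $\hat\mu \neq \mu^\star$. The starting point is the algebraic identity
\[
\hat f - f^\star = (\hat\lambda - \lambda^\star)(\phi_{\mu^\star} - \phi) + \hat\lambda\,(\phi_{\hat\mu} - \phi_{\mu^\star}),
\]
obtained by inserting $\pm\,\hat\lambda \phi_{\mu^\star}$. Taking the $\mathbb{L}^2$ inner product against $\phi_{\mu^\star} - \phi$ and applying Cauchy--Schwarz yields
\[
|\hat\lambda - \lambda^\star|\,\|\phi_{\mu^\star} - \phi\|_2 \leq \|\hat f - f^\star\|_2 + \hat\lambda\,\|\phi_{\hat\mu} - \phi_{\mu^\star}\|_2.
\]

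Two translation estimates are then needed. The upper bound $\|\phi_{\hat\mu} - \phi_{\mu^\star}\|_2 \leq c_1\,\|\hat\mu - \mu^\star\|$ is a change-of-variable consequence of $\HL$. For the matching lower bound, $\HS$ allows Taylor-expanding $\phi_\mu - \phi$ at the origin and identifying the leading quadratic form as $\mu^\top \Sigma_\phi\,\mu$ with $\Sigma_\phi = \int \nabla\phi\,\nabla\phi^\top\,\ud x$ positive definite (since $\phi$ is non-constant on $\RR^d$); a continuity--compactness argument on $[-M,M]^d$ then yields $\|\phi_{\mu^\star} - \phi\|_2 \geq c_2\,\|\mu^\star\|$ uniformly. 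Substituting, multiplying by $\|\mu^\star\|$, squaring, and using $\hat\lambda^2 \leq 2(\lambda^\star)^2 + 2(\hat\lambda - \lambda^\star)^2$ produces
\[
(\hat\lambda - \lambda^\star)^2\,\|\mu^\star\|^4 \lesssim \|\mu^\star\|^2\|\hat f - f^\star\|_2^2 + (\lambda^\star)^2\,\|\mu^\star\|^2\|\hat\mu - \mu^\star\|^2 + (\hat\lambda - \lambda^\star)^2\,\|\mu^\star\|^2\|\hat\mu - \mu^\star\|^2.
\]
The first two pieces on the right-hand side are of order $\log^2 n/n$ in expectation by Theorems~\ref{th:oracle} and \ref{thm:vitesse_mu} (together with $\|\mu^\star\|^2 \leq dM^2$), and the third is to be absorbed into the left-hand side.

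Absorption works on the event $E_n := \{\|\hat\mu - \mu^\star\|^2 \leq \|\mu^\star\|^2/(2C)\}$, with $C$ the implicit constant in the third piece, and this is exactly where the hypothesis $\ell_n/\log n \to +\infty$ enters: Markov applied to Theorem~\ref{thm:vitesse_mu}, together with the constraint $\lambda^\star \geq \ell_n/(\|\mu^\star\|^2\sqrt n)$ defining $\TThl$, yields $\PR(E_n^c) \lesssim 1/\ell_n^2 \to 0$. On $E_n$ the claimed bound of Theorem~\ref{thm:vitesse_lambda} follows directly by taking expectations in the displayed inequality.

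The main obstacle I anticipate is treating the contribution of $E_n^c$ sharply enough to stay within the target rate $\log^2 n/n$. A crude bound $(\hat\lambda - \lambda^\star)^2\|\mu^\star\|^4 \leq M^4 d^2$ combined with the Markov estimate only gives $O(1/\ell_n^2)$, which is $o(1)$ but not $O(\log^2 n/n)$ under the mild assumption $\ell_n/\log n \to +\infty$ alone. Closing this gap appears to require going beyond the in-expectation form of Theorems~\ref{th:oracle} and \ref{thm:vitesse_mu}: the proof of the former relies on a Bernstein-type concentration for the centered empirical process $f \mapsto n^{-1}\sum_i f(X_i) - \int f\,f^\star$, which delivers an exponential tail $\PR(\|\hat f - f^\star\|_2 > c\log n/\sqrt n) \leq n^{-2}$; transferring this through the Cauchy--Schwarz step upgrades the Markov control of $\|\hat\mu - \mu^\star\|$ to a sub-exponential tail, making $\PR(E_n^c) \lesssim n^{-2}$ and rendering the bad-event contribution negligible.
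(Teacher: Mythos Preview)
Your approach is correct and your anticipation of the obstacle—and its resolution via Bernstein tails—is exactly right. The paper follows the same overall architecture: a good-event/bad-event split, a deterministic pathwise bound on the good event showing $\|\hat\mu-\mu^\star\|\le \tfrac12\|\mu^\star\|$, and a Bernstein-based probability bound $\PP(\text{bad event})\le C/n$ (so the crude bound $(\hat\lambda-\lambda^\star)^2\|\mu^\star\|^4\le d^2M^4$ suffices there). Concretely, the paper works with the events $\mathcal{B}=\{\|\hat f-f_{\lambda_0,\mu_0}\|_2\le n^{-1/2}\}$ and $\mathcal{D}=\{|\mathcal{G}_n(\hat\lambda,\hat\mu)|\le c\log(n|\mathcal{M}_n|)/\sqrt{n}\}$ coming from Lemma~\ref{prop:controlG}; on $\mathcal{B}\cup(\mathcal{B}^c\cap\mathcal{D})$ one has the pathwise oracle bound $\|\hat f-f^\star\|_2^2\lesssim \log^2 n/n$, and $\PP(\mathcal{B}^c\cap\mathcal{D}^c)\le C/n$. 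This is precisely the ``exponential tail'' upgrade you describe.

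Where your argument genuinely diverges is in how the $\LL^2$ bound is converted into control of $\hat\lambda-\lambda^\star$. You use the elementary identity $\hat f-f^\star=(\hat\lambda-\lambda^\star)(\phi_{\mu^\star}-\phi)+\hat\lambda(\phi_{\hat\mu}-\phi_{\mu^\star})$ and ordinary Cauchy--Schwarz, which produces a cross term $(\hat\lambda-\lambda^\star)^2\|\mu^\star\|^2\|\hat\mu-\mu^\star\|^2$ to absorb. The paper instead invokes the \emph{refined} Cauchy--Schwarz inequality of Proposition~\ref{prop:CS}, which yields directly
\[
\|\hat f-f^\star\|_2^2 \;\ge\; c\,(\lambda^\star-\hat\lambda)^2\|\hat\mu\|^2\|\mu^\star\|^2 \;+\; c\,(\lambda^\star)^2\|\mu^\star\|^2\|\hat\mu-\mu^\star\|^2,
\]
so that no absorption is needed; the only work on the good event is replacing $\|\hat\mu\|^2$ by $\|\mu^\star\|^2$ via $\|\mu^\star\|\le 2\|\hat\mu\|$. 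Your route is more elementary for this step (it avoids re-invoking the heavy Appendix~\ref{s:CS} machinery), while the paper's route gives a slightly cleaner pathwise lower bound that is also reused for the Wasserstein comparison in Theorem~\ref{theo:L2W2}. Either way, since you take Theorem~\ref{thm:vitesse_mu} as given and that theorem already rests on Proposition~\ref{prop:CS}, the two proofs have the same dependency graph.
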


The proof is given in Section \ref{s:preuve_l}. Once again, we can immediately deduce from this bound that: 
$$ \EE_{\lambda^\star,\mu^\star} \left[\left(\frac{\hat \lambda_n}{\lambda^\star}-1 \right)^2 \right] \leq \frac{C_2 \log^{2}n}{n\{\lambda^\star\}^2 \|\mu^\star\|^4},$$
which only makes sense when $\sqrt{n}\lambda^\star \|\mu^\star\|^2 \rightarrow +\infty$ as $n\rightarrow +\infty$. We stress that in the particular case of fixed $\lambda^\star$ and $\mu^\star$ (w.r.t. $n$), these quantities can be estimated at the classical parametric rate of $1/\sqrt{n}$ (up to a logarithmic term). 

\begin{rem}
The upper bounds displayed in Theorems \ref{thm:vitesse_mu} and \ref{thm:vitesse_lambda} both involve a $(\log(n))^2$ term. This logarithmic term comes from the oracle inequality in Theorem 2.1 and is related to the complexity of the set, namely $\MM$, over which our contrast is minimized. As we will see in the next section, such a term is missing from our lower bound. Up to our knowledge, a logarithmic gap between lower and upper bounds is a classical outcome when dealing with contrast minimization estimators.  
\end{rem}

\section{Lower bounds}\label{s:lb}

We now derive some lower bounds on the estimation of $\lambda^\star$ and $\mu^\star$ and show that our previous results are \textit{minimax optimal} with respect to the values of $n$, $\lambda^\star$ and $\mu^\star$ up to some $\log^2 n$ terms. 

\subsection{Strong contamination model}
\label{sub:lb1}

For this purpose, we split our study into two cases and first consider the ``standard" situation of a strong contamination, meaning that $\|\mu^\star\|$ is bounded from below by a constant independent on $n$: it translates the fact that the contamination is not negligible when $n \longrightarrow +\infty$. 
Let $m$ and $c$ be two positive constants, and:
$$
\Thh:= \left\{ \theta= (\lambda,\mu) : \frac{c}{\|\mu\|^2 \sqrt{n} } \leq \lambda,\  m \leq \|\mu\|    \right\}.  
$$

Note that this still allows a weak effect of contamination since $\lambda^\star$ can be on the order of $n^{-1/2}$. In this case, we obtain the lower bounds that matches (up to a log term) the upper bounds obtained in Theorems  \ref{thm:vitesse_mu} and \ref{thm:vitesse_lambda}. 

\begin{theo}
\label{th:lb:R1}  Consider two positive constants $m$ and $c$ such that $0<\frac{c}{m^2 \sqrt{n}}<1$ so that $\Thh$ is non empty. 
A density $\phi$ that satisfies $\HS$ and $\HL$ exists such that:
\begin{itemize}
\item[$(i)$] a positive constant $C_1$ exists such that: 
\begin{equation}
\label{eq:lbmu:R1}
\underset{(\hat\lambda,\hat\mu)}{\inf}\  \underset{(\lambda,\mu)\in\Thh }{\sup}\ \EE[\lambda^2 \|\hat \mu - \mu\|^2] \geq \frac{C_1}{n},
\end{equation}
\item[$(ii)$] a positive constant $C_2$ exists such that: 
\begin{equation}
\label{eq:lblambda:R1}
\underset{(\hat\lambda,\hat\mu)}{\inf}\  \underset{(\lambda,\mu)\in\Thh }{\sup}\ \EE[(\hat \lambda - \lambda)^2] \geq \frac{C_2}{n},
\end{equation}
\end{itemize}
where the infimum is taken over all estimators $\hat \theta=(\hat\lambda,\hat\mu)$ in  \eqref{eq:lbmu:R1} and \eqref{eq:lblambda:R1}. The constants $C_1$ and $C_2$ depend on $c$, $m$ and $\mathcal J$ (defined in $\HL$).
\end{theo}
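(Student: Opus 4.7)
The plan is to apply Le Cam's two-point method with $\phi$ taken as the standard $d$-dimensional Gaussian density; this $\phi$ satisfies $\HS$ and $\HL$ via the log-concavity criterion of Remark~\ref{rem:1} (the potential $U(x) = \|x\|^2/2$ is convex and $\|\nabla U\| + \|D^2 U\| = o_\infty(U)$). For each of (i) and (ii) I exhibit two parameters $\theta_0, \theta_1 \in \Thh$ separated by order $1/\sqrt{n}$ in the relevant coordinate and satisfying $TV(P_{\theta_0}^{\otimes n}, P_{\theta_1}^{\otimes n}) \leq 1/2$; Le Cam's lemma then yields a lower bound of order $1/n$. I assume $n$ is large enough that $c/(m^2\sqrt{n}) \leq 1/4$; smaller $n$ are covered by shrinking the constants.

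For (i), take $\theta_0 = (1/2, m e_1)$ and $\theta_1 = (1/2, (m + c_1/\sqrt{n}) e_1)$: both lie in $\Thh$ and the loss separation equals $\lambda_0^2 \|\mu_0 - \mu_1\|^2 = c_1^2/(4n)$. For (ii), take $\theta_0 = (1/4, m e_1)$ and $\theta_1 = (1/4 + c_2/\sqrt{n}, m e_1)$: both in $\Thh$ and $(\lambda_0 - \lambda_1)^2 = c_2^2/n$. In both cases I combine $\mathrm{KL} \leq \chi^2$ and $f_{\lambda,\mu} \geq (1-\lambda)\phi$ to obtain
$$
\chi^2\bigl(f_{\theta_0},f_{\theta_1}\bigr) \leq \frac{A^2}{1-\lambda_{\max}} \int_{\RR^d} \frac{\psi^2}{\phi}\,dx,
$$
with $(A,\psi) = (\lambda_0, \phi_{\mu_0} - \phi_{\mu_1})$ in (i) and $(A,\psi) = (c_2/\sqrt{n}, \phi_{\mu} - \phi)$ in (ii). The Gaussian identity $\int \phi_{\mu_a}\phi_{\mu_b}/\phi\,dx = e^{\mu_a\cdot\mu_b}$ reduces the first integral to $e^{\|\mu_0\|^2} + e^{\|\mu_1\|^2} - 2e^{\mu_0 \cdot \mu_1}$, which a Taylor expansion in $\delta = c_1/\sqrt{n}$ shows equals $(1+m^2)e^{m^2}\delta^2 + O(\delta^3)$; the second integral is $e^{m^2} - 1$, a constant depending only on $m$. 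Hence $n\chi^2 = O(c_i^2)$ in each part, and Pinsker's inequality gives $TV \leq 1/2$ for $c_i$ small enough, at which point Le Cam's lemma concludes.

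The only delicate step is the $\delta^2$ scaling of $e^{\|\mu_0\|^2} + e^{\|\mu_1\|^2} - 2 e^{\mu_0\cdot\mu_1}$: each individual term has size $e^{m^2}$, and the second-order cancellation (essentially the Gaussian Fisher information for location) is what yields the correct quantitative bound. An alternative derivation based on $\HL$, via $(\phi_{\mu_0}-\phi_{\mu_1})(x)^2 \leq \|\mu_0-\mu_1\|^2 g(x-\mu_1)^2$, would also give $O(\delta^2)$ but requires verifying that $\int g(\cdot-\mu_1)^2/\phi\,dx$ is bounded uniformly in $\mu_1$, which is immediate for the Gaussian since $g$ has Gaussian tails.
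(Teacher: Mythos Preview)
Your proof is correct. It differs from the paper's in several interesting ways, though the overall Le Cam two-point strategy is the same.

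For item~(i), the paper places $\lambda$ at the \emph{lower} boundary of $\Thh$, namely $\lambda=c/(m^2\sqrt n)$, and then chooses $\mu_1,\mu_2$ with separation of \emph{constant} order (so that $\lambda\|\mu_1-\mu_2\|$ is of order $n^{-1/2}$). You instead fix $\lambda=1/2$ and let $\|\mu_0-\mu_1\|$ be of order $n^{-1/2}$. Both balances yield $\lambda^2\|\mu_0-\mu_1\|^2\asymp 1/n$, but your choice has a pleasant side effect: since $\lambda_0=\lambda_1$, the asymmetric loss $\rho(\theta,\theta')=\lambda\|\mu-\mu'\|$ becomes symmetric on your pair, so the standard Le Cam lemma applies and you do not need the paper's generalized version (Lemma~\ref{propgen-lowerbound}) handling the weak triangle inequality~\eqref{condrho}.

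On the information side, the paper bounds the Kullback divergence generically through Assumption~$\HL$, obtaining $\KL(\PP_{\theta_1},\PP_{\theta_2})\le \lambda^2\|\mu_1-\mu_2\|^2\cJ/(1-\bar\lambda)$; this works for \emph{any} $\phi$ satisfying $\HL$. You specialize to the Gaussian, pass through $\chi^2$, and use the closed form $\int \phi_{\mu_a}\phi_{\mu_b}/\phi=e^{\mu_a\cdot\mu_b}$ together with a second-order Taylor expansion. Your route is more explicit and identifies the constant as essentially the Gaussian location Fisher information; the paper's route is shorter and immediately portable to other baselines. You correctly note that the $\HL$ argument would also yield the $O(\delta^2)$ bound, with the only extra check being the uniform finiteness of $\int g(\cdot-\mu_1)^2/\phi$; in fact this is precisely what the paper uses (with the shift absorbed differently), so the two arguments are morally the same.

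For item~(ii) the two proofs are essentially identical: same $\mu$, $\lambda$'s separated by order $n^{-1/2}$, and $\KL$ (or $\chi^2$) controlled by $(\lambda_1-\lambda_2)^2\|\mu\|^2$ times a constant.
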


Even though the proof relies on a Le Cam argument and leads to a $n^{-1}$ rate, it  clearly deserves a careful study for at least two reasons: the loss is asymmetric in $(\lambda,\mu)$ in $i)$ and the balance between $\lambda,\mu$ and $n$ is unclear. We give the proof of this result in Appendix \ref{sub:LWBR1}.

\subsection{Weak contamination model}
 \label{sub:lb2}
We now study the situation when the contamination $\|\mu\|$ is not yet bounded from below and can therefore tend to $0$ as $n \longrightarrow + \infty$. 
Let $c>0$, and:
$$
\Th := \left\{ \theta= (\lambda,\mu) : \frac{c}{\|\mu\|^2 \sqrt{n} } \leq \lambda  \right\} .    
$$


We introduce a sub-class of densities $\phi$ that satisfy the following assumption:
\paragraph{Assumption $\HD$} \textit{The density $\phi$ satisfies:}
\begin{equation}
\label{IHD}
\cI_\phi := \sup_{1 \leq j \leq d} \int \{d_{j,j}\phi(x)\}^2 \phi^{-1}(x) dx < +\infty,
\end{equation}
where $d_{j,j}$ refers to the second derivative of $\phi$ with respect to the variable $j$.
Note that Assumption $\HD$ is needed for our lower bound results but is not necessary to obtain good estimation properties. However, this assumption is very mild and is again satisfied for many probability distributions as pointed out in Remark \ref{rem:1}. Moreover, from the minimax paradigm, it is enough to obtain our lower bound results with a restricted subset of densities $\phi$.

\begin{theo}
\label{th:lb:R2}
An integer $N>0$ and a function $\phi$ that satisfies $\HS$ and $\HD$ exists such that, for all $n>N$:
\begin{itemize} 
\item[$(i)$]  a positive constant $C_1$ exists such that:
\begin{equation}
\label{eq:lbmu:R2}
\underset{(\hat\lambda,\hat\mu)}{\inf}\  \underset{(\lambda,\mu)\in\Th}{\sup}\ \EE[\|\mu\|^4 (\lambda-\hat\lambda)^2] \geq \frac{C_1}{n},
\end{equation}
\item[$(ii)$] a positive constant $C_2$ exists such that: 
\begin{equation}
\label{eq:lblambda:R2}
\underset{(\hat\lambda,\hat\mu)}{\inf}\  \underset{(\lambda,\mu)\in\Th}{\sup}\ \EE[\lambda^2 \|\mu\|^2 \|\mu-\hat\mu\|^2] \geq \frac{C_2}{n},
\end{equation}
\end{itemize}
where the infimum is taken over all estimators $\hat \theta=(\hat\lambda,\hat\mu)$ in  \eqref{eq:lbmu:R2} and \eqref{eq:lblambda:R2}.
The constant $C_1$ and $C_2$ depend on $c$ and $\mathcal I_\phi$ (defined in $\HD$).
\end{theo}
Finally, we should also remark that estimating $\mu$ when $\lambda$ becomes negligible comparing to $n^{-1/2}$ appears to be impossible as pointed out in $(ii)$ of Theorem \ref{th:lb:R2}.

\section{Discussion}\label{sec:discussion}

\subsection{Related works on distances inequalities and mixture models}

In this paragraph, we provide some additional remarks on the links between several metrics used to describe mixture models in the particular situation of our two-component contamination model.
As pointed out in \cite{ho2016} and \cite{KH15}, relating distances between probability distributions on the observations, and Wasserstein distances (defined in \eqref{defW}) on the space of mixture measures is a popular subject of investigation. Of course, it makes sense when we handle some strong-identifiable models as remarked in the cited previous works.
We will rely the rates for estimating contamination mixtures to rates for general mixtures. The latter are usually stated in terms of transportation distance between the mixing distributions $G$. For a contamination mixture, it reads:

\begin{align}
    \label{eq:Gcontam}
    G_{\lambda, \mu} & = (1 - \lambda) \delta_0 + \lambda \delta_{\mu},
\end{align}

where $\delta_{\theta}$ is the Dirac peak at $\theta $. 


In \cite{ho2016}, it is shown that the Total Variation distance denoted $V(f_{\lambda,\mu},f_{\lambda^\star,\mu^\star})$ between the probability distributions dominates the Wasserstein distance $W_1(G_{\lambda, \mu}, G_{\lambda^\star,\mu^\star})$ when the number of components is known. When it is unknown, but we are only interested in the distance of the estimator to the true distribution, the rate deteriorates to $V(f_{\lambda,\mu},f_{\lambda^\star,\mu^\star}) \gtrsim W_2^2(G_{\lambda, \mu}, G_{\lambda^\star,\mu^\star})$, under appropriate identifiability conditions. 

When we are interested in local minimax rates of convergences, the situation worsens, as proved in \cite{KH15}. It is shown that the supremum norm between the probability distributions $\|.\|_{\infty}$ dominates the Wasserstein distance $W_{2m-1}^{2m-1}$ where essentially $2m-1$ is the number of unknown positions to be estimated in the mixture model (the $m$ possible locations and the $m-1$ dimensional weights distribution):
$$
\left\lVert f_{\lambda,\mu} - f_{\lambda^\star,\mu^\star}  \right\rVert_{\infty}  \gtrsim W_{2m-1}^{2m-1}(G_{\lambda, \mu}, G_{\lambda^\star,\mu^\star}). 
$$
The Dvoretzky-Kiefer-Wolfowitz inequality then allows \cite{KH15}  to deduce a $n^{-1/(4m-2)}$ rate of convergence on the parameters. 

Notice that for two components, the above speed is in $n^{-1/6}$, whereas our
speeds here are in $n^{-1/4}$. This is because the bound by  \cite{KH15} is for
generic mixture models, while in this work, we deal with a specific
two-component contaminated model. Specifically,  in typical cases, the minimax
speed for estimating the parameters of mixture models is $n^{-1/2d}$ where $d$
is the number of parameters. The generic two-component model has three
parameters, whereas our contamination model has only two.

\subsection{Comparing $W_2$ and $\|.\|_2$ in a two-component contamination model}

In this work, we have chosen to handle the $\mathbb L^2$ distance on probability distributions, instead of  $V$ or $\|.\|_{\infty}$, nevertheless a relationship between $\|.\|_2$ and $W_p$ should exist. The next result essentially states this dependency.

\begin{theo}\label{theo:L2W2}
For any density $\phi$ that satisfies $\HS$ and $\HL$, a constant $c_\phi>0$ exists such that:
$$\forall (\lambda,\lambda') \in (0,1)^2 \quad \forall (\mu,\mu')\in [-M,M]^d \qquad 
\|f_{\lambda,\mu}-f_{\lambda',\mu'}\|_2  \geq c_\phi W_2^{2}(G_{\lambda,\mu},G_{\lambda',\mu'}).
$$
\end{theo}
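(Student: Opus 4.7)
Without loss of generality, assume $\lambda \leq \lambda'$. The plan is to combine three ingredients: an explicit transport plan to upper bound $W_2^2$, a quadratic lower bound on $\|\phi - \phi_\nu\|_2$ in terms of $\|\nu\|^2$, and a refined Cauchy--Schwarz argument to prevent cancellation in the natural decomposition of $f_{\lambda,\mu} - f_{\lambda',\mu'}$. First, moving $\lambda$ units of mass from $\mu$ to $\mu'$ and the remaining $\lambda' - \lambda$ units from $0$ to $\mu'$ yields
\[
W_2^2(G_{\lambda,\mu},G_{\lambda',\mu'}) \leq \lambda\, \|\mu - \mu'\|^2 + (\lambda' - \lambda)\, \|\mu'\|^2.
\]

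Second, I would establish the quadratic lower bound $\|\phi - \phi_\nu\|_2 \geq c_\phi \|\nu\|^2$ for every $\nu \in [-2M,2M]^d$. The proof splits into two regimes. For $\|\nu\|$ small, the smoothness from $\HS$ yields the Taylor expansion $\|\phi - \phi_\nu\|_2^2 = \nu^\top A\, \nu + o(\|\nu\|^2)$ with $A_{ij} = \langle \partial_i\phi, \partial_j\phi\rangle$; the matrix $A$ is positive definite, since if $\sum_i c_i \partial_i\phi \equiv 0$ for some nonzero $c\in\RR^d$ then $\phi$ would be constant along the direction $c$, contradicting $\int\phi=1$. Hence $\|\phi - \phi_\nu\|_2 \gtrsim \|\nu\|$ near the origin. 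For $\|\nu\|$ bounded away from zero in the compact set $[-2M,2M]^d$, continuity of $\nu \mapsto \|\phi - \phi_\nu\|_2$ combined with the identifiability of Proposition~\ref{prop:id} (which ensures $\|\phi-\phi_\nu\|_2 > 0$) provides a uniform positive lower bound. In either case, the boundedness of $\|\nu\|$ converts the linear or constant lower bound into the claimed quadratic one.

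The core of the argument then relies on the decomposition
\[
f_{\lambda,\mu} - f_{\lambda',\mu'} = (\lambda' - \lambda)(\phi - \phi_{\mu'}) + \lambda(\phi_\mu - \phi_{\mu'}),
\]
which cleanly separates the weight and the location variations. Applying the quadratic lower bound at $\nu = \mu'$ and at $\nu = \mu - \mu'$ gives $\|\phi - \phi_{\mu'}\|_2 \geq c_\phi \|\mu'\|^2$ and $\|\phi_\mu - \phi_{\mu'}\|_2 \geq c_\phi \|\mu - \mu'\|^2$. Using the refined Cauchy--Schwarz inequality of Appendix~\ref{s:CS} to control the cross product $\langle \phi - \phi_{\mu'}, \phi_\mu - \phi_{\mu'}\rangle$, I expect to obtain a constant $c_1 > 0$ depending only on $\phi$ and $M$ such that
\[
\|f_{\lambda,\mu} - f_{\lambda',\mu'}\|_2^2 \geq c_1 \left[(\lambda' - \lambda)^2 \|\phi - \phi_{\mu'}\|_2^2 + \lambda^2 \|\phi_\mu - \phi_{\mu'}\|_2^2\right].
\]
Combining this with the elementary inequality $a^2 + b^2 \geq (a+b)^2/2$ and with the previous two steps then yields
\[
\|f_{\lambda,\mu} - f_{\lambda',\mu'}\|_2 \geq c_\phi'\left[(\lambda'-\lambda)\|\mu'\|^2 + \lambda \|\mu - \mu'\|^2\right] \geq c_\phi'\, W_2^2(G_{\lambda,\mu},G_{\lambda',\mu'}),
\]
which is the announced inequality.

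The main obstacle will be the uniform refined Cauchy--Schwarz estimate. The two scores $\phi - \phi_{\mu'}$ and $\phi_\mu - \phi_{\mu'}$ can approach colinearity, and potentially anti-parallelism, precisely in the degenerate regimes $\mu' \to 0$ or $\mu \to \mu'$, where the corresponding term of the decomposition vanishes anyway. Resolving this requires either a compactness and continuity argument that isolates these two limiting configurations and uses the vanishing of the problematic term to obtain the bound directly from the surviving one, or a sharper quantitative Cauchy--Schwarz bound that absorbs the near-degenerate geometry; this is precisely what the estimate of Appendix~\ref{s:CS} is designed to provide.
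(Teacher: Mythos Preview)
Your overall architecture matches the paper's, but step 4 contains a genuine gap: the inequality
\[
\|f_{\lambda,\mu} - f_{\lambda',\mu'}\|_2^2 \;\geq\; c_1\Bigl[(\lambda'-\lambda)^2\|\phi-\phi_{\mu'}\|_2^2 + \lambda^2\|\phi_\mu-\phi_{\mu'}\|_2^2\Bigr]
\]
with a uniform $c_1>0$ is \emph{false}. Take $\lambda'=2\lambda$, $\mu=2\mu'$ and let $\mu'\to 0$. Then $f_{\lambda,\mu}-f_{\lambda',\mu'}=\lambda(\phi-2\phi_{\mu'}+\phi_{2\mu'})$, a second difference of size $\lambda\|\mu'\|^2\|\phi''\|_2$, so the left side is of order $\lambda^2\|\mu'\|^4$; but the right side is of order $\lambda^2\|\mu'\|^2$. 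Your diagnosis of the degenerate regimes is also off: anti-parallelism of $\phi-\phi_{\mu'}$ and $\phi_\mu-\phi_{\mu'}$ occurs precisely in the joint limit $\mu,\mu'\to 0$ with $\mu\approx t\mu'$, $t>1$, and in that regime neither summand dominates the other, so you cannot ``fall back on the surviving term''.

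What Proposition~\ref{prop:CS} actually delivers is
\[
\bigl|\langle \phi-\phi_{\mu'},\phi_\mu-\phi_{\mu'}\rangle\bigr|\;\leq\;\|\phi-\phi_{\mu'}\|_2\,\|\phi_\mu-\phi_{\mu'}\|_2\,\bigl(1-c\,\|\phi-\phi_{\mu}\|_2^2\bigr),
\]
so that after expanding $\|u+v\|_2^2$ you obtain the paper's inequality \eqref{eq:vit} with an \emph{extra prefactor} $\|\phi-\phi_\mu\|_2^2\sim\|\mu\|^2$ (and symmetrically $\|\mu'\|^2$), not the bare bound you wrote. The paper then does not compare this directly with $W_2^2$; instead it reshapes the transport bound via $\|\mu'\|\leq\|\mu\|+\|\mu-\mu'\|$ into
\[
W_2^4 \;\lesssim\; (\lambda'-\lambda)^2\|\mu\|^2\|\mu'\|^2 + \bigl(\lambda^2\|\mu\|^2+\lambda'^2\|\mu'\|^2\bigr)\|\mu-\mu'\|^2,
\]
and matches this against the symmetrized version of \eqref{eq:vit}. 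That extra algebraic step is not optional: it is exactly what absorbs the missing factor your argument drops.
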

Hence, $\hat{f}_n := f_{\hat{\lambda}_n,\hat{\mu}_n}$ defined by \eqref{eq:estimator} satisfies 
$$
\mathbb{E}_{\lambda^\star,\mu^\star} \left[  W_2^4(G_{\hat\lambda_n,\hat\mu_n},G_{\lambda^\star,\mu^\star}) \right] \lesssim \mathbb{E} \left[\|\hat f_n - f_{\lambda^{\star},\mu^{\star}}\|_2^2\right] \lesssim \frac{(\log n)^2}{n}.
$$
In other words, the $\mathbb{L}^2$ strategy investigated in this paper allows in fact to control the Wasserstein distance between the estimated mixture distribution $G_{\hat\lambda_n,\hat\mu_n}$ and the target $G_{\lambda^\star,\mu^\star}$.
On the other hand, a lower bound on the minimax rate of convergence in term of the Wasserstein distance may not be directly deduced from our results displayed in Theorems \ref{th:lb:R1} or \ref{th:lb:R2} because of the
lack of symmetry in $(\lambda, \mu)$ with respect to $(\hat{\lambda},
\hat{\mu})$.

\section{Simulation study}\label{sec:example_distribution}

\subsection*{Distributions}
In this section, we assess the performance of the $\LL^2$-estimator given in \eqref{eq:estimator} on four particular cases  ($d=1$) of baseline density $\phi$. We study the following features:
\begin{itemize}
\item Standard Gaussian case with $\phi(x)=\frac{1}{\sqrt{2\pi}} e^{-\frac 1 2 x^2}.$
\item Non-smooth distribution with the Laplace density $\phi(x)=\frac 1 2 e^{-|x|}.$
\item Heavy tailed distribution with the Cauchy density:
$
\phi(x)=\frac{1}{\pi (1+x^2)}.$
\item Asymmetry with the skew Gaussian density:
$ \phi(x)= 2 \psi(x) \Psi(\alpha x),$
where $\psi$ and $\Psi$, respectively, denote the density and the cumulative function of the standard Gaussian distribution and where $\alpha$ is the asymmetry  parameter different from $0$ (in the simulations, 
we fix $\alpha=10$). This example of asymmetric distributions has been introduced by \cite{azzalini85}.
\end{itemize}

Our estimator requires the calculation of the contrast $\gamma_n$ and, in particular, the value of the $\LL^2$ norm: 
$$\|f_{\lambda,\mu}\|_2^2 = \left[\lambda^2+(1-\lambda)^2\right] \|\phi\|_2^2+2 \lambda(1-\lambda)\langle \phi,\phi_{\mu}\rangle,$$ that involves the value of inner product $\langle\phi, \phi_\mu\rangle$ for any value of the location parameter $\mu \in [-M,M]$. In the first three examples of distributions, a closed formula exists:
 \begin{itemize}
\item Gaussian density: $\langle\phi, \phi_\mu\rangle = (4\pi)^{-\frac 1 2} \exp\left[-\frac 1 4 \mu^2\right]$
\item Laplace density: $\langle\phi, \phi_\mu\rangle = \frac 1 4 e^{-|\mu|} (1+|\mu|)$
\item Cauchy density:  $\langle\phi, \phi_\mu\rangle = \frac{2}{\pi(4+\mu^2)}$
\end{itemize}
Unfortunately, such a formula is not available (to our knowledge) for the skew Gaussian density: there is no analytical expression of $\langle\phi, \phi_\mu\rangle$.  Instead, we used a Monte-Carlo procedure to evaluate this quantity for each value of $\mu$ in our grid $\mathcal{M}_{n}$ given in \eqref{eq:grille}. To obtain a sufficient approximation of these inner products, we used a number of Monte-Carlo iterations $T_{MC}$ each time of the order $T_{MC} \propto n^2$ (where $n$ will be the sample size used for our estimation problem).

\subsection*{Statistical setting} We have worked in 1-D with a fixed value of $\lambda^\star = \frac 1 4$ while $\mu^\star$ is allowed to vary with $n$. Below, we used the following relationship between $\mu^\star$ and $n$:
$$\mu^{\star} = \sqrt{\frac{1}{\lambda^\star  n^\nu}} \qquad \text{with} \quad \nu= \frac{\alpha}{24},\ \alpha\in\left\{ 1,\dots, 24\right\}.$$ 
For each value of the parameter $\mu^{\star}$, we used $10^3$ Monte-Carlo simulations to obtain reliable results, while the grid size is determined by fixing the maximal value of the unknown $|\mu^{\star}|$ as $M=10$. Finally, we sampled a set of $n=5000$ observations each time.

\begin{figure}[h!]
\begin{minipage}[b]{0.4\linewidth}
\centerline{\includegraphics[width=7cm]{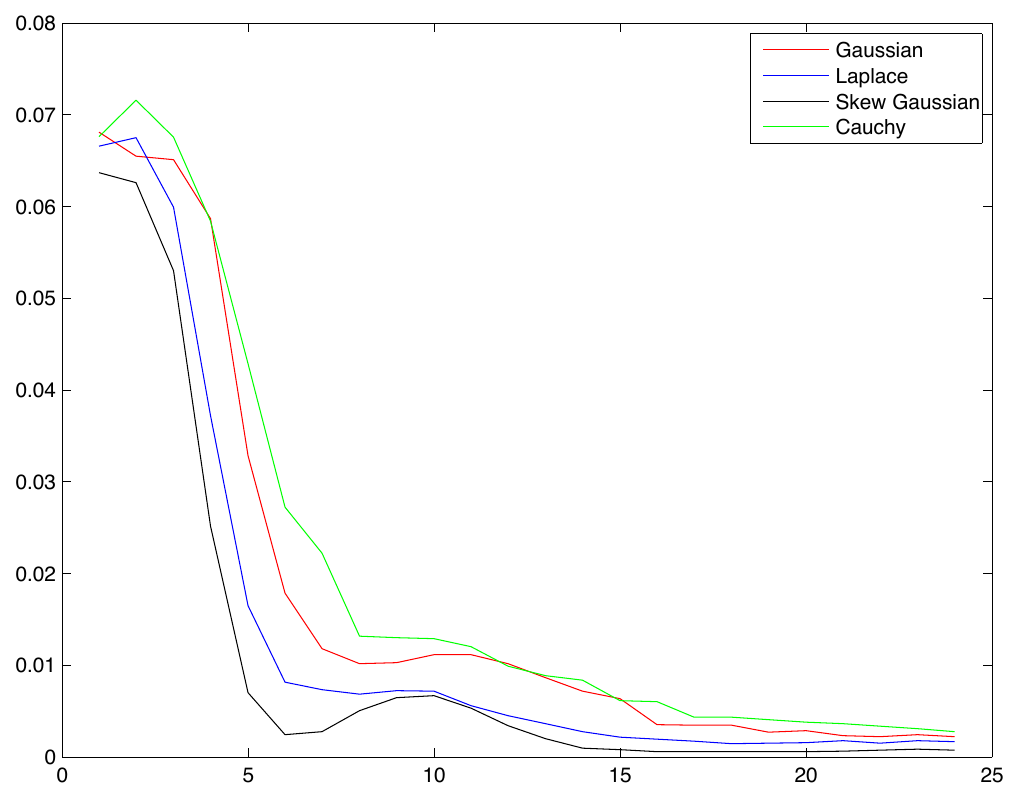}}
\end{minipage}\hfill
\begin{minipage}[b]{0.4\linewidth}
\centerline{\includegraphics[width=7cm]{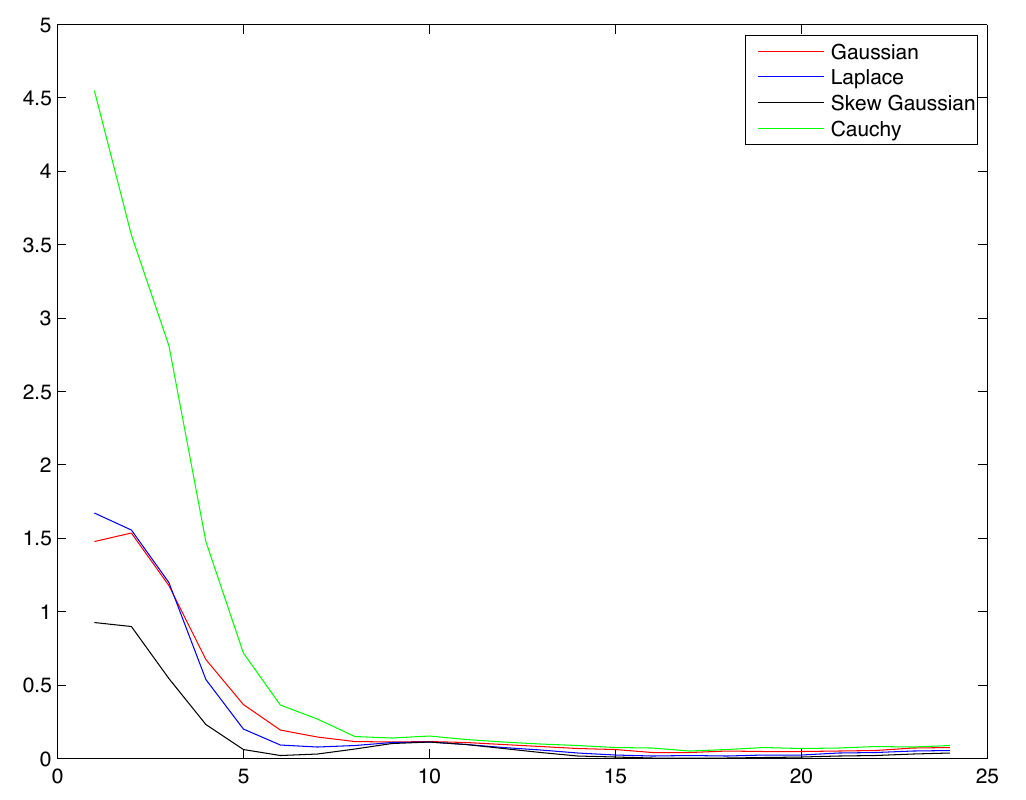}}
\end{minipage}
\caption{Mean square error of estimating $\lambda^\star$ (left) and $\mu^\star$ (right) for the 24 values of $\nu$ in descending order.\label{fig:mse}}
\end{figure}

In Fig. \ref{fig:mse}, for each case of the mixture model, we represent the evolution of the mean square error for the estimation of $\lambda^\star$ and of $\mu^\star$ when $\nu$ varies between $1/24$ and $1$:
$$\nu \longmapsto \mbox{MSE}(\lambda) = \frac{1}{10^3}\underset{j=1}{\stackrel{10^3}{\sum}}  (\hat \lambda_j - \lambda^\star)^2$$ 
and  
$$\nu \longmapsto \mbox{MSE}(\mu) = \frac{1}{10^3}\underset{j=1}{\stackrel{10^3}{\sum}}  (\hat \mu_j - \mu^\star)^2.$$
As pointed out in Fig. \ref{fig:mse}, the estimation of $\lambda^\star$ and $\mu^\star$ performs quite well as soon as $\nu$ is lower than $1/2$ but becomes completely inconsistent when $\nu>1/2$, even if we use a sample size of $5000$ observations.

We also represent the violin plot of these estimations indicating the same behavior in each particular case (Gaussian and Laplace in Fig. \ref{fig:gauss-lap}; Cauchy and skew Gaussian in Fig. \ref{fig:cauchy-skew}).

\begin{figure}[h!]
\begin{minipage}[c]{0.4\linewidth}
\centerline{\includegraphics[width=7cm]{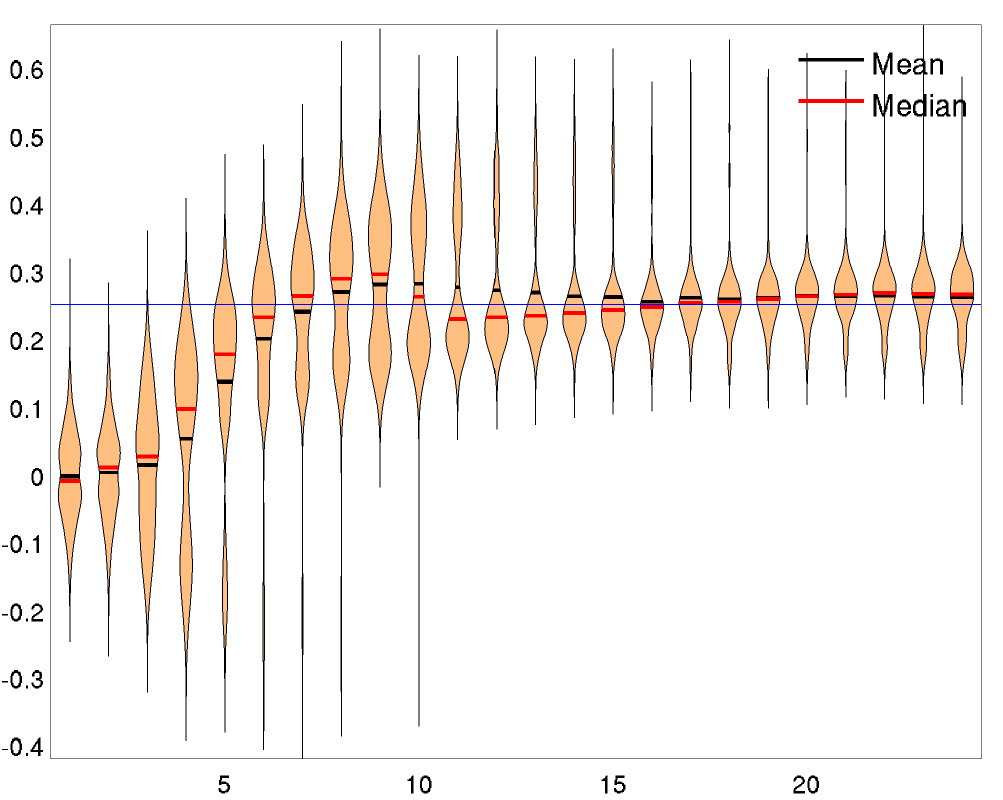}}
\end{minipage}\hfill
\begin{minipage}[c]{0.4\linewidth}
\centerline{\includegraphics[width=7cm]{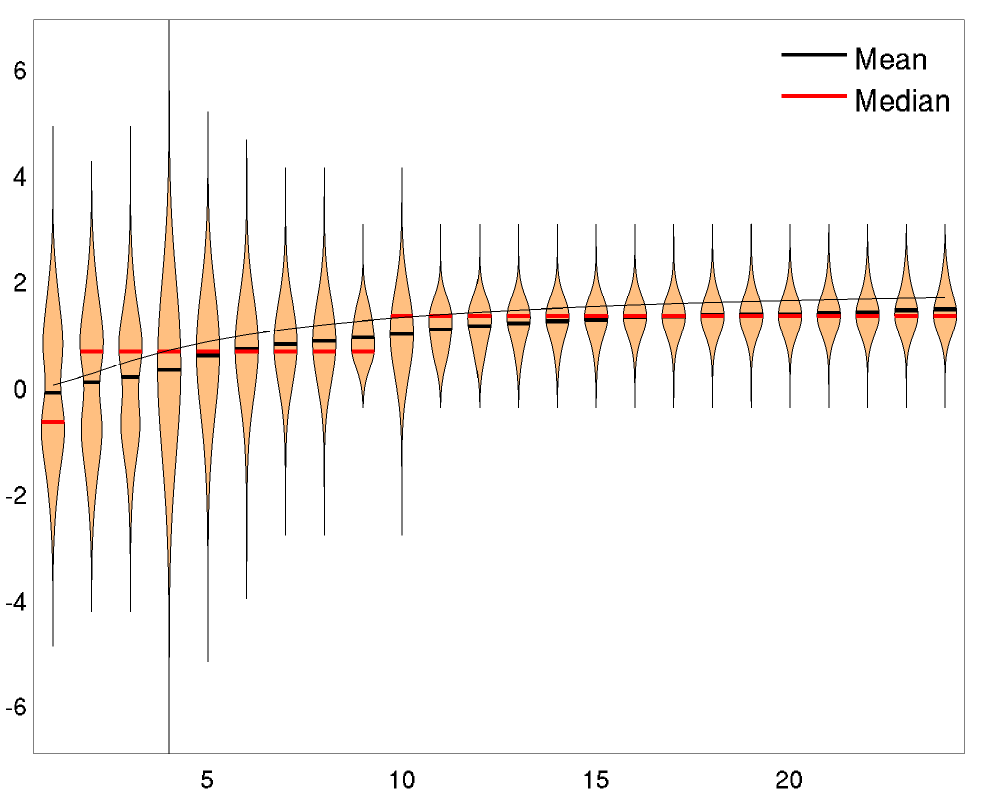}}
\end{minipage}
\begin{minipage}[c]{0.4\linewidth}
\centerline{\includegraphics[width=7cm]{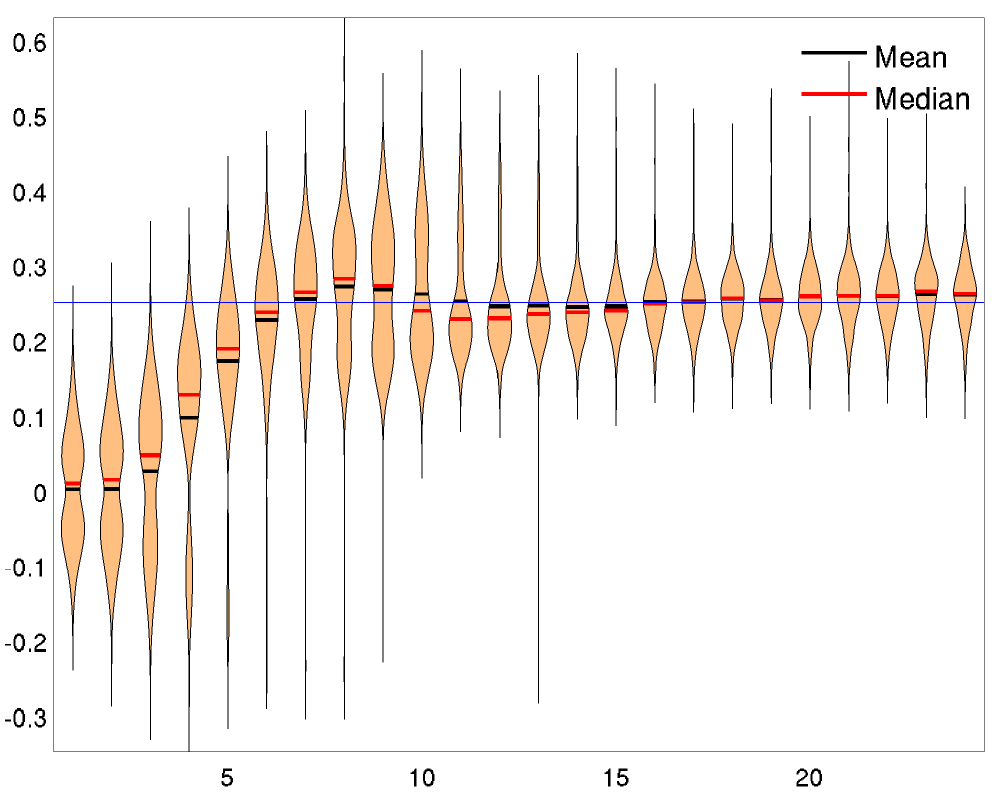}}
\end{minipage}\hfill
\begin{minipage}[c]{0.4\linewidth}
\centerline{\includegraphics[width=7cm]{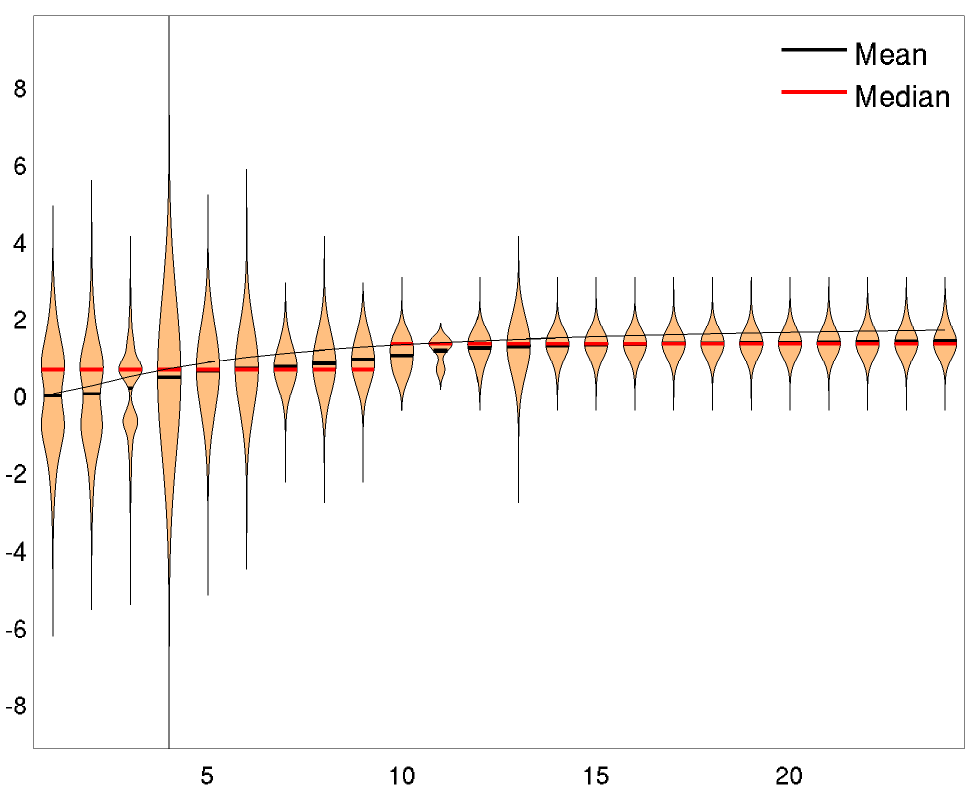}}
\end{minipage}
\caption{Evaluation of $\lambda^\star$ (on the left) and $\mu^\star$ (on the right) for our estimators when \textbf{Gaussian} mixtures (top) and \textbf{Laplace} mixtures (bottom) are considered, for the 24 values of $\nu$ in descending order.}\label{fig:gauss-lap}
\end{figure}

\begin{figure}[h!]
\begin{minipage}[b]{0.4\linewidth}
\centerline{\includegraphics[width=7cm]{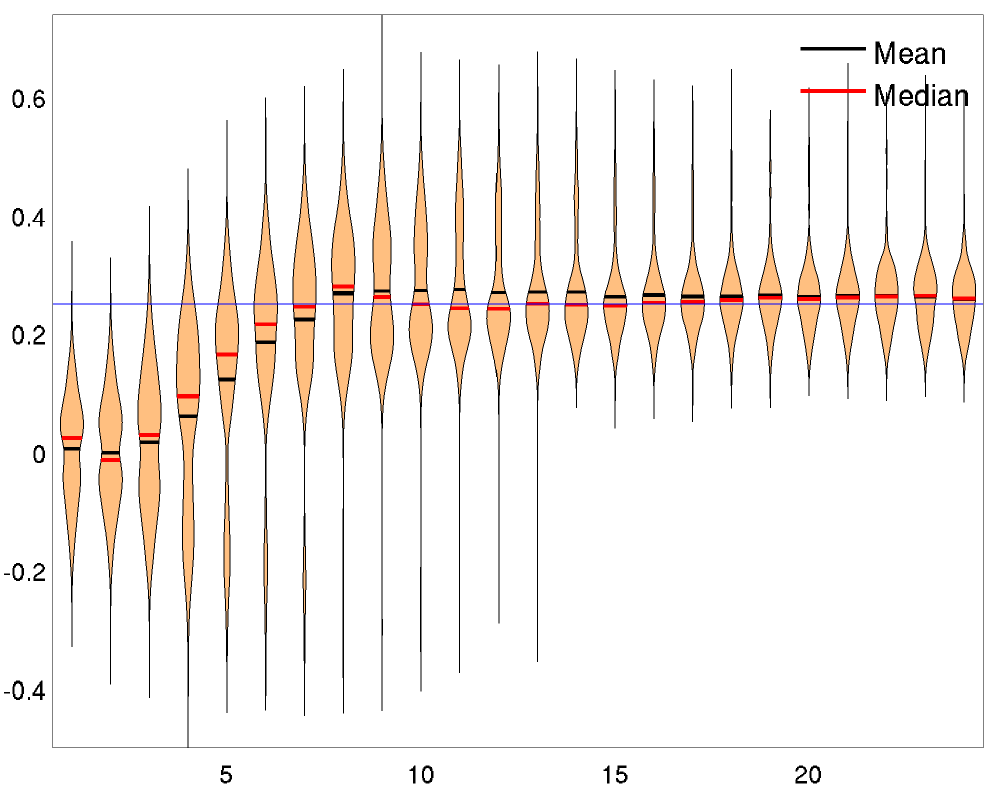}}
\end{minipage}\hfill
\begin{minipage}[b]{0.4\linewidth}
\centerline{\includegraphics[width=7cm]{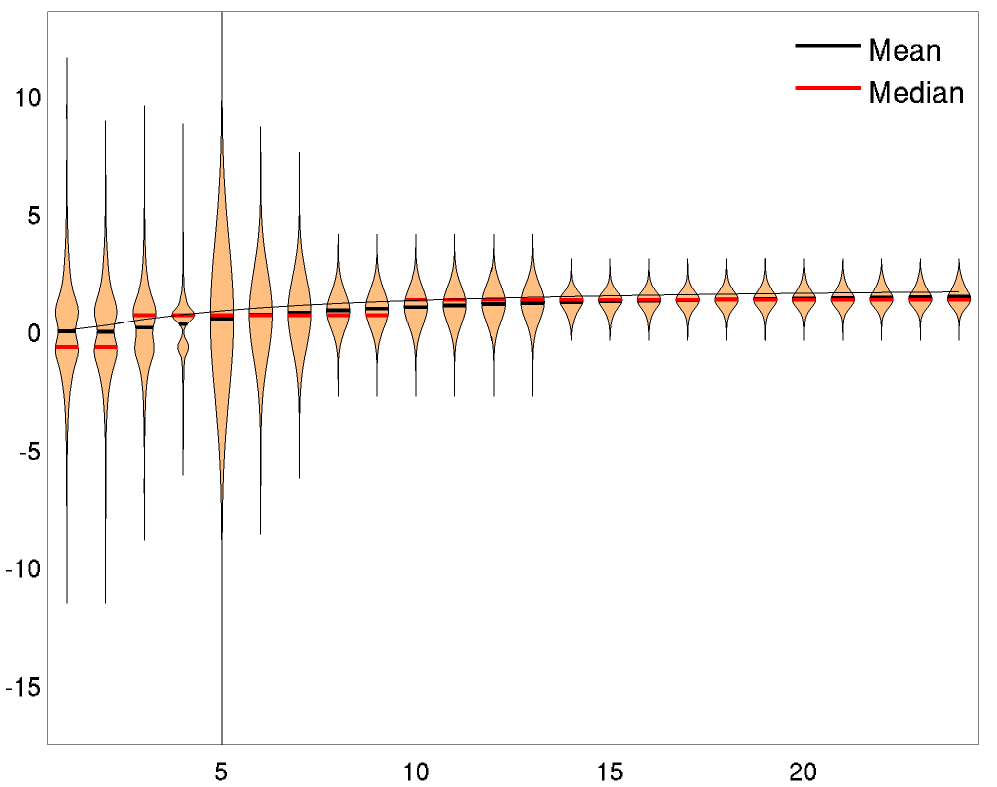}}
\end{minipage}
\begin{minipage}[b]{0.4\linewidth}
\centerline{\includegraphics[width=7cm]{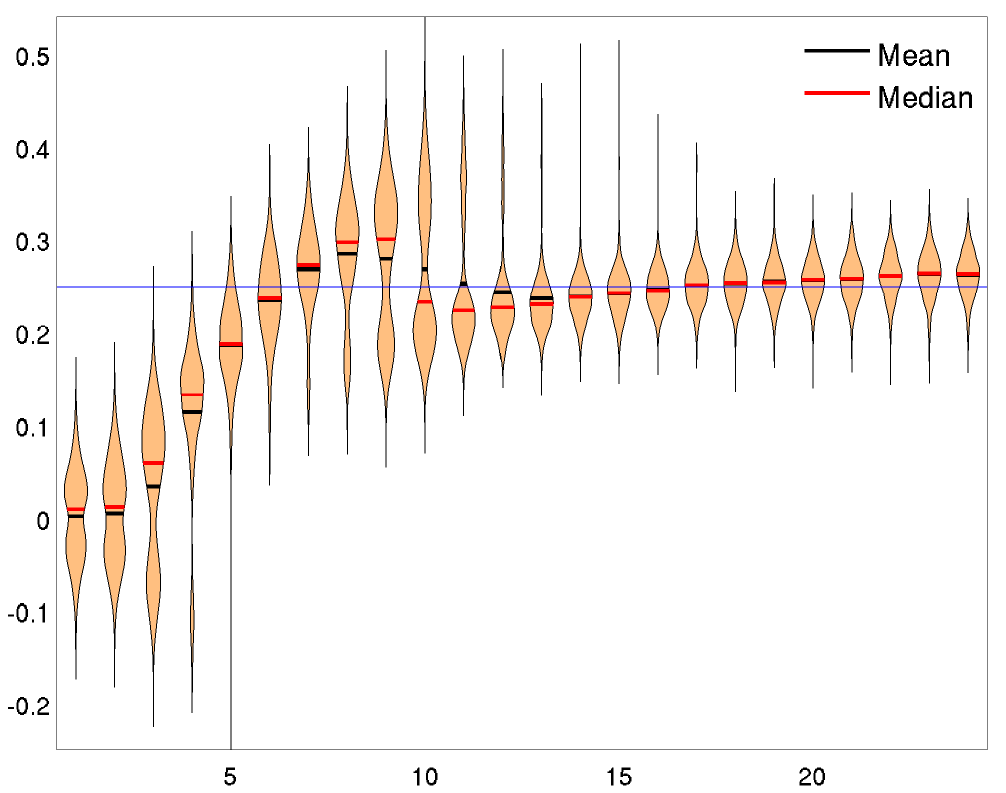}}
\end{minipage}\hfill
\begin{minipage}[b]{0.4\linewidth}
\centerline{\includegraphics[width=7cm]{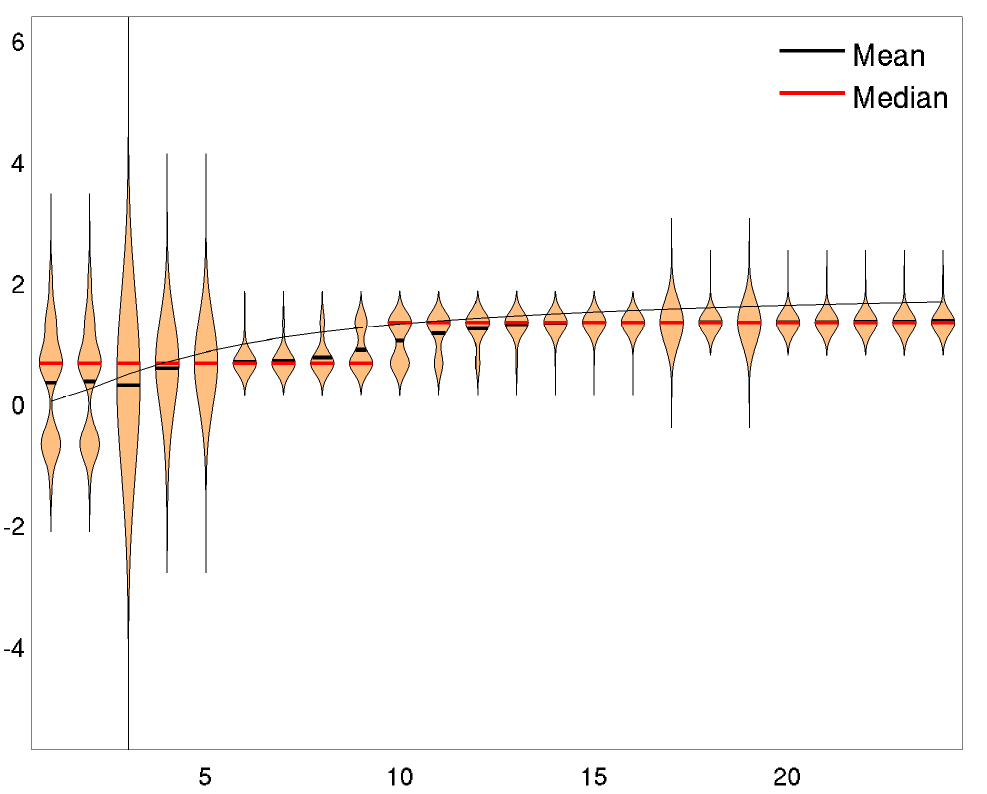}}
\end{minipage}
\caption{Evaluation of $\lambda^\star$ (on the left) and $\mu^\star$ (on the right) for our estimators when \textbf{Cauchy} mixtures (top) and \textbf{skew Gaussian} mixtures (bottom) are considered, for the 24 values of $\nu$ in descending order.}\label{fig:cauchy-skew}
\end{figure}

Again, a similar conclusion holds: the estimators derived from \eqref{eq:estimator} exhibit a low bias and variance when $\nu$ is chosen small enough (lower than $1/2$, which corresponds to values greater than 12 in the horizontal axes of Figs. \ref{fig:gauss-lap}-\ref{fig:cauchy-skew}). In contrast, the estimation is seriously damaged for values of $\nu$ greater than $1/2$ (which corresponds to values lower than 11 in the horizontal axes of Figs. \ref{fig:gauss-lap}-\ref{fig:cauchy-skew}). Finally, it should be noted that the shape of the density $\phi$ does not seem to have a big influence on the estimation ability, even though the Cauchy distribution settings may be seen as the most difficult problem (as represented by the green MSE in Fig. \ref{fig:mse}).

\section{Proofs of the upper bounds}
 \label{s:proof:ub}

\subsection{Preliminary oracle inequality}\label{s:oracle} 
We first establish a technical proposition that will be used to derive the proof of Theorem \ref{th:oracle}. For a given grid $\MM$, we first introduce the theoretical minimizer of the $\LL^2$-norm on this grid:
\begin{equation}\label{eq:approximation}
(\lambda_0,\mu_0) = \arg \min_{(\lambda,\mu)\in \MM} \| f_{\lambda,\mu} - f^\star \|_2^2.
\end{equation}
We then define $\mathcal{E}_n( \lambda, \mu)$ the empirical process indexed by $(\lambda,\mu)\in\MM$  as:
$$
\mathcal{E}_n ( \lambda, \mu) 
 = \frac{2}{n} \sum_{i=1}^n \left\lbrace   f_{\lambda,\mu}(X_i) - f_{\lambda_0,\mu_0}(X_i) - [ \langle f_{\lambda,\mu} - f_{\lambda_0,\mu_0},f^\star \rangle  ] \right\rbrace.
$$
For all $(\lambda,\mu) \in \MM$, the term $\mathcal{E}_n(\lambda,\mu)$ can be rewritten as:
\begin{equation}
\mathcal{E}_n(\lambda,\mu) = \frac{1}{n} \sum_{i=1}^n ( Y_i - \mathbb{E}[Y_i]) \quad \mathrm{where} \quad Y_i := 2 [ f_{\lambda,\mu}(X_i) -f_{\lambda_0,\mu_0}(X_i) ].
\label{eq:def_E}
\end{equation}
In particular, $\EE[\mathcal{E}_n(\lambda,\mu) ]=0$ and:
\begin{eqnarray*}
\mathrm{Var}(Y_i) \leq \mathbb{E}[Y_i^2]
& = & 4 \mathbb{E}[ (f_{\lambda,\mu}(X_i) - f_{\lambda_0,\mu_0}(X_i) )^2] , \\
& = & 4 \int_\mathbb{R} [ f_{\lambda,\mu}(x) - f_{\lambda_0,\mu_0}(x) ]^2 f^{\star}(x)dx , \\
& \leq & 4 \| \phi \|_\infty \| f_{\lambda,\mu} - f_{\lambda_0,\mu_0} \|_2^2,
\end{eqnarray*}
since $\| f^\star \|_\infty \leq \| \phi \|_\infty$.  We will use a normalized version of this process below, which naturally leads 
to the introduction of $\mathcal{G}_n(\lambda,\mu)$:
$$
\forall (\lambda,\mu) \in \MM \setminus \{(\lambda_0,\mu_0)\} \qquad 
\mathcal{G}_n ( \lambda, \mu)=\frac{\mathcal{E}_n ( \lambda, \mu)}{\| f_{\lambda,\mu} - f_{\lambda_0,\mu_0}\|_2}.
$$

Our estimator   $(\hat\lambda_n, \hat\mu_n)$ defined in (\ref{eq:estimator}) satisfies  the following useful property.
\begin{lemma}\label{prop:controlG}~\\
\begin{itemize}
\item[$(i)$]
For any $(\lambda,\mu)$ such that $\|f_{\lambda,\mu}-f_{\lambda_0,\mu_0}\|_2 \geq n^{-1/2}$:
\begin{equation}
\label{controlG}
\forall s >0 \qquad 
\PP \left( | \mathcal{G}_n(\lambda,\mu)  | > s \right) 
\leq \exp \left( - \frac{n s^2 }{8 \|\phi\|_\infty \left[ 1 +   \frac{s\sqrt{n}}{3}\right]}  \right).
\end{equation}
\item[$(ii)$]
We can find $C>0$ such that:
\begin{equation}
\label{controlEG2}
\EE\left[\cG^2_n(\hat \lambda_n,\hat \mu_n) \mathds{1}_{\cB^c}\right] \leq \frac{C \log^2(|\MM|)}{n},
\end{equation}
where $\mathcal{B}$ is the event defined as 
$ 	\mathcal{B} = \left\lbrace \| \hat f_n - f_{\lambda_0,\mu_0} \|_2 \leq \frac{1 }{\sqrt{n}}  \right\rbrace.
$ 
\end{itemize}
\end{lemma}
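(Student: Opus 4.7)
The plan is to establish $(i)$ via a direct application of Bernstein's inequality to the i.i.d.\ sum defining $\cE_n$, and $(ii)$ by combining $(i)$ with a union bound over the finite grid $\MM$ and a tail integration.

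For part $(i)$, starting from the representation $\cE_n(\lambda,\mu) = n^{-1}\sum_{i=1}^n(Y_i - \EE[Y_i])$ given in \eqref{eq:def_E}, I note that any contamination density $f_{\lambda,\mu}$ is a convex combination of translates of $\phi$ and therefore satisfies $0\leq f_{\lambda,\mu}(x)\leq \|\phi\|_\infty$. Consequently $|Y_i|\leq 2\|\phi\|_\infty$ and $|Y_i-\EE[Y_i]|\leq 4\|\phi\|_\infty$; meanwhile the variance bound $\mathrm{Var}(Y_i)\leq 4\|\phi\|_\infty\sigma^2$, with $\sigma:=\|f_{\lambda,\mu}-f_{\lambda_0,\mu_0}\|_2$, is already recorded in the paragraph preceding the lemma. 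Applying Bernstein's inequality at deviation level $t=s\sigma$ produces a bound of the shape $\exp\bigl(-ns^2/[8\|\phi\|_\infty(1 + s/(3\sigma))]\bigr)$; the assumption $\sigma\geq n^{-1/2}$, i.e.\ $1/\sigma\leq \sqrt{n}$, replaces the sub-exponential correction by $s\sqrt{n}/3$ and yields the announced bound on $|\cG_n(\lambda,\mu)|$.

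For part $(ii)$, I first observe that on the complement $\cB^c$ the random pair $(\hat\lambda,\hat\mu)$ satisfies $\|f_{\hat\lambda,\hat\mu} - f_{\lambda_0,\mu_0}\|_2>n^{-1/2}$, so that $(i)$ is applicable at $(\hat\lambda,\hat\mu)$. A union bound over the finite grid $\MM$ then gives, for every $s>0$,
\[
\PP\bigl(|\cG_n(\hat\lambda,\hat\mu)|\mathds{1}_{\cB^c}>s\bigr)\leq |\MM|\,q(s),\qquad q(s):=\exp\!\Bigl(-\tfrac{ns^2}{8\|\phi\|_\infty(1+s\sqrt{n}/3)}\Bigr).
\]
Writing the second moment via the layer-cake formula $\EE[\cG_n(\hat\lambda,\hat\mu)^2\mathds{1}_{\cB^c}]=\int_0^\infty 2s\,\PP(|\cG_n(\hat\lambda,\hat\mu)|\mathds{1}_{\cB^c}>s)\,ds$, I calibrate a threshold $s_0=K\log(|\MM|)/\sqrt{n}$. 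A short computation shows that for $K$ large enough (depending only on $\|\phi\|_\infty$) one has $|\MM|\,q(s_0)\leq |\MM|^{-1}$, since the exponent at $s_0$ is dominated by its sub-exponential piece and is therefore of order $K\log|\MM|/\|\phi\|_\infty$. Bounding the probability by $1$ on $[0,s_0]$ contributes $s_0^2=O(\log^2(|\MM|)/n)$, the target size.

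Finally, the tail $\int_{s_0}^\infty 2s\,|\MM|\,q(s)\,ds$ is split according to whether $s\sqrt{n}/3\leq 1$ (Gaussian regime, $q(s)\lesssim e^{-cns^2}$) or $s\sqrt{n}/3>1$ (exponential regime, $q(s)\lesssim e^{-c's\sqrt{n}}$). Both contributions integrate in closed form to a quantity of order at most $1/n$, easily absorbed into $\log^2(|\MM|)/n$. The main technical difficulty is precisely this last step: one has to calibrate $K$ so that $|\MM|\,q(s_0)$ is summable simultaneously in both Bernstein regimes and to keep track of absolute constants so that the final constant $C$ depends only on $\|\phi\|_\infty$.
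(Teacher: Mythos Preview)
Your proposal is correct and follows essentially the same approach as the paper: part $(i)$ is exactly the paper's Bernstein/Bennett computation, and part $(ii)$ is the same union-bound-plus-tail-integration argument, with only cosmetic differences (you use the layer-cake formula directly while the paper writes $t^2 + \sum_{(\lambda,\mu)}\EE[\cG_n^2\mathds{1}_{|\cG_n|>t}]$, and your Gaussian/exponential split of the tail is in fact unnecessary since, with your choice of $s_0$, the threshold already satisfies $s_0\sqrt{n}/3\geq 1$, so only the sub-exponential regime is active---precisely what the paper exploits).
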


\begin{proof} In this proof, $C$ refers to a constant that is independent of $n$, whose value may change from line to line.\\

\underline{Proof of $(i)$:}
thanks to the Bennett inequality, we obtain for all $s>0$:
\begin{eqnarray*}
\lefteqn{\PP \left( | \mathcal{G}_n(\lambda,\mu)  | > s \right) }\\
& \leq & \exp \left( - \frac{n^2 s^2 \| f_{\lambda,\mu}-f_{\lambda_0,\mu_0} \|_2^2}{8n \|\phi\|_\infty \| f_{\lambda,\mu}-f_{\lambda_0,\mu_0} \|_2^2 + 8n\| \phi\|_\infty s\| f_{\lambda,\mu}-f_{\lambda_0,\mu_0} \|_2/3}  \right), \\
& = & \exp \left( - \frac{n s^2 }{8 \|\phi\|_\infty \left[ 1 +   s\| f_{\lambda,\mu}-f_{\lambda_0,\mu_0} \|_2^{-1}/3\right]}  \right).
\end{eqnarray*}
Using the fact that $\|f_{\lambda,\mu}-f_{\lambda_0,\mu_0}\|_2 \geq n^{-1/2}$, we obtain:
$$
\PP \left( | \mathcal{G}_n(\lambda,\mu)  | > s \right)  \leq  \exp \left( - \frac{n s^2 }{8 \|\phi\|_\infty \left[ 1 +   \frac{s\sqrt{n}}{3}\right]}  \right),
$$
which is the desired Inequality \eqref{controlG}.

\underline{Proof of $(ii)$:}
 observe that for all $t>0$,
\begin{eqnarray}
\mathbb{E}\left[  \mathcal{G}_n^2(\hat\lambda_n,\hat\mu_n) \mathds{1}_{\mathcal{B}^c}\right]
& \leq & t^2 + \mathbb{E} \left[ \mathcal{G}_n^2(\hat\lambda_n,\hat\mu_n)\mathds{1}_{\lbrace | \mathcal{G}_n(\hat\lambda_n,\hat\mu_n)  | > t \rbrace}  \mathds{1}_{\mathcal{B}^c} \right] \nonumber,\\
& \leq & t^2 + \mathbb{E} \left[\sup_{(\lambda,\mu): \|f_{\lambda,\mu}-f_{\lambda_0,\mu_0}\| \geq n^{-1/2}} \left\lbrace \mathcal{G}_n^2(\lambda,\mu) \mathds{1}_{\lbrace | \mathcal{G}_n(\lambda,\mu)  | > t \rbrace} \right\rbrace\right],\nonumber \\
& \leq & t^2+ \sum_{(\lambda,\mu) : \|f_{\lambda,\mu}-f_{\lambda_0,\mu_0}\| \geq n^{-1/2} } \mathbb{E} \left[ \mathcal{G}_n^2(\lambda,\mu) \mathds{1}_{\lbrace | \mathcal{G}_n(\lambda,\mu)  | > t \rbrace} \right] \label{eq:tecG}. 
\end{eqnarray}

Integrating by parts, we can remark that:
$$ \mathbb{E} \left[ \mathcal{G}_n^2(\lambda,\mu) \mathds{1}_{\lbrace | \mathcal{G}_n(\lambda,\mu)  | > t \rbrace} \right]  = t^2  \ \PP( | \mathcal{G}_n(\lambda,\mu)  | > t ) + \int_{t^2}^{+\infty} \PP( | \mathcal{G}_n(\lambda,\mu)  | > \sqrt{x} )dx.$$
Thus, if we choose $t = \left( \frac{16 \| \phi\|_\infty \log (| \MM|)}{3} \vee 3\right) n^{-1/2}$, then $t \sqrt{n}/3 \geq 1$, so that  for any $s \geq t$ and for a fixed  $(\lambda,\mu)$, \eqref{controlG} yields:
\begin{eqnarray*}
\lefteqn{
\mathbb{E} \left[ \mathcal{G}_n^2(\lambda,\mu) \mathds{1}_{\lbrace | \mathcal{G}_n(\lambda,\mu)  | > t \rbrace} \right]
}\\
&\leq & t^2 \exp \left( - \log (| \MM|) \right) + \int_{t^2}^{+\infty}
\exp \left( - \frac{3\sqrt{nx }}{16\|\phi\|_\infty }\right) dx \\
& \leq & C \frac{ \log^2 (| \MM|)}{n  } \times \frac{1}{| \MM|}+ 2 \int_{t}^{+\infty} u \exp \left( -\frac{3\sqrt{n }u }{16\|\phi\|_\infty }\right) du,
\end{eqnarray*}
for large enough $C$, 
where the last line comes from the size of $t$ for the left-hand side, and from the change of variable $u = \sqrt{x}$ in the integral. The remaining integral  may be integrated by parts, which in turn leads to: 
$$
\mathbb{E} \left[ \mathcal{G}_n^2(\lambda,\mu) \mathds{1}_{\lbrace | \mathcal{G}_n(\lambda,\mu)  | > t \rbrace} \right] \leq  C \frac{\log^2 (| \MM|)}{n } \times \frac{1}{| \MM|}.$$

If we plug the above upper bound into \eqref{eq:tecG}, we then obtain that a sufficiently large constant $C$ exists such that: 
\begin{equation*}
\mathbb{E} \left[ \mathcal{G}_n^2(\hat\lambda_n,\hat\mu_n) \mathds{1}_{\mathcal{B}^c} \right]
\leq C \frac{ \log^2(| \MM|)}{n} \times \frac{| \MM| }{| \MM|} | =  C \frac{ \log^2(| \MM|)}{n}.
\end{equation*}
\end{proof}

We are now interested in the proof of the oracle inequality.
\begin{proof}[Proof of Theorem \ref{th:oracle}]
The best approximation term $(\lambda_0,\mu_0)$ over the grid $\MM$ is defined in \eqref{eq:approximation} and the event $\mathcal{B} = \left\lbrace \| \hat f_n - f_{\lambda_0,\mu_0} \|_2 \leq \sqrt{\frac{1 }{n}}  \right\rbrace$ is introduced in Proposition \ref{prop:controlG}.
On the event $\mathcal{B}$, the situation is easy using the Young inequality $2 ab \leq \alpha a^2+ \alpha^{-1} b^2$ so that for all $\alpha>0$,
\begin{eqnarray}
\EE \left[\| \hat f_n - f^\star \|_2^2 \mathds{1}_\mathcal{B}\right] 
& \leq & (1+ \alpha) \| f_{\lambda_0,\mu_0} - f^\star \|_2^2 + (1+\alpha^{-1})\EE \left[\| \hat f_n - f_{\lambda_0,\mu_0} \|_2^2 \mathds{1}_\mathcal{B}\right],  \nonumber\\
& \leq & (1+\alpha) \| f_{\lambda_0,\mu_0} - f^\star \|_2^2 + \frac{1+\alpha^{-1}}{n}.
\label{eq:m1}
\end{eqnarray}

We provide below a similar control on the event $\mathcal{B}^c$. First, observe that according to the definition of $(\hat\lambda_n, \hat\mu_n)$, for all $(\lambda,\mu) \in \MM$, we have:
\begin{eqnarray*}
& & \gamma_n(\hat \lambda_n, \hat \mu_n) + \| f^\star \|_2^2  \leq  \gamma_n(\lambda,\mu) + \| f^\star \|_2^2, \\
& \Leftrightarrow & \| \hat f_n - f^\star \|_2^2 \leq \| f_{\lambda,\mu} - f^\star \|_2^2 + 2 \left[  \frac{1}{n} \sum_{i=1}^n \hat f_n(X_i) - \langle \hat f_n,f^\star \rangle \right] \\ 
& &  \hspace{3cm}- 2 \left[ \frac{1}{n} \sum_{i=1}^n f_{\lambda,\mu}(X_i) - \langle f_{\lambda,\mu} , f^\star \rangle    \right].
\end{eqnarray*}
This inequality being true for $(\lambda,\mu)= (\lambda_0,\mu_0)$, we obtain:
$$
\| \hat f_n - f^\star \|_2^2\mathds{1}_{\mathcal{B}^c}   \leq \| f_{\lambda_0,\mu_0} - f^\star \|_2^2 + \mathcal{E}_n(\hat \lambda_n,\hat\mu_n) \mathds{1}_{\mathcal{B}^c}.
$$

This implies that for all $0<\alpha<1$:
\begin{eqnarray*}
&  & \| \hat f_n - f^\star \|_2^2 \mathds{1}_{\mathcal{B}^c} \leq \| f_{\lambda_0,\mu_0} - f^\star \|_2^2 + \| \hat f_n - f_{\lambda_0,\mu_0} \|_2 \frac{\mathcal{E}_n(\hat \lambda_n,\hat\mu_n)}{\| \hat f_n - f_{\lambda_0,\mu_0} \|_2}\mathds{1}_{\mathcal{B}^c}, \nonumber \\
& \Rightarrow & \| \hat f_n - f^\star \|_2^2 \mathds{1}_{\mathcal{B}^c} \leq \| f_{\lambda_0,\mu_0} - f^\star \|_2^2 + \frac{\alpha}{2} \| \hat f_n - f_{\lambda_0,\mu_0} \|_2^2 \mathds{1}_{\mathcal{B}^c} + \frac{1}{2 \alpha} \cG_n^2(\hat\lambda_n,\hat\mu_n) \mathds{1}_{\mathcal{B}^c}. \nonumber \\
\end{eqnarray*}
Using $\|u+v\|^2 \leq 2 \|u\|^2+2\|v\|^2$, we then deduce that: 
\begin{equation}
 \| \hat f_n - f^\star \|_2^2 \mathds{1}_{\mathcal{B}^c} \leq \frac{(1+\alpha ) }{(1-\alpha) } \| f_{\lambda_0,\mu_0} - f^\star \|_2^2  +  \frac{1}{2\alpha} \cG_n^2(\hat\lambda_n,\hat\mu_n)\mathds{1}_{\mathcal{B}^c}.
\label{eq:inter1}
\end{equation}
We can conclude the proof taking \eqref{controlEG2} in \eqref{eq:inter1}, and \eqref{eq:m1} together.
\end{proof}

\subsection{Proof of Theorem \ref{thm:vitesse_mu}}
\label{s:preuvemu}
We aim to apply the oracle  inequality established in Theorem \ref{th:oracle}. First, we need an upper bound on the approximation term given by $ \| f_{\lambda_0,\mu_0} - f^\star \|_2^2 $ when $(\lambda_0,\mu_0)$ belongs to our grid $\mathcal{M}_n$.
We can observe that for all $(\lambda,\mu) \in (0,1) \times \mathbb{R}^d$,
\begin{eqnarray} 
\| f_{\lambda,\mu} - f^\star \|_2^2 
& = & \| (1-\lambda) \phi + \lambda \phi_{\mu} - (1-\lambda^\star)\phi - \lambda^\star \phi_{\mu^\star} \|_2^2\nonumber \\
& = & \| (\lambda^\star - \lambda) \lbrace \phi - \phi_{\mu} \rbrace + \lambda^\star \lbrace \phi_{\mu} - \phi_{\mu^\star}  \rbrace \|_2^2\label{eq:interm}  \\
& \leq & 2(\lambda^\star - \lambda)^2 \| \phi - \phi_{\mu} \|_2^2 + 2 \{\lambda^\star\}^2 \| \phi_{\mu} - \phi_{\mu^\star} \|_2^2.\nonumber
\end{eqnarray}
Using Proposition \ref{prop:cnorme}, we can find two positive constants $\overline{\kappa}$ and $\underline{\kappa}$ such that:
\begin{equation}
\label{linkphimu}
\forall (\mu,\tilde{\mu})\in\RR^d \times\RR^d   \qquad 
\underline{\kappa} \|\mu-\tilde \mu\|^2 \leq \| \phi_{\mu} - \phi_{\tilde \mu} \|_2^2 \leq \overline{\kappa} \|\mu - \tilde\mu\|^2,
\end{equation}
which in turn implies that:
$$
\| f_{\lambda,\mu} - f^\star \|_2^2 
\leq  8 \|\phi\|_2^2 (\lambda^\star - \lambda)^2 + 2\overline{\kappa} \,  \{\lambda^\star\}^2 \|\mu-\mu^\star\|^2.
$$
In particular, the definition of $\mathcal{M}_n$ given in (\ref{eq:grille}) makes it possible to find a constant $C>0$ such that:
\begin{equation}
 \| f_{\lambda_0,\mu_0} - f^\star \|_2^2=
\inf_{(\lambda,\mu) \in \mathcal{M}_n} \| f_{\lambda,\mu} - f^\star \|_2^2 \leq \frac{C}{n}.
\label{eq:maj}
\end{equation}

At the same time, observe that \eqref{eq:interm} leads to:
\begin{eqnarray} 
\| \hat f_n - f^\star \|_2^2
& = &  (\lambda^\star - \hat\lambda_n)^2 \| \phi - \phi_{\hat\mu_n} \|_2^2   + \{\lambda^\star\}^2 \| \phi_{\hat\mu_n} - \phi_{\mu^\star} \|_2^2 \nonumber\\
& & \hspace{1.5cm} + 2 (\lambda^\star - \hat\lambda_n) \lambda^\star \langle  \phi - \phi_{\hat\mu_n} , \phi_{\hat\mu_n} - \phi_{\mu^\star} \rangle.\nonumber
\end{eqnarray}
Using Proposition \ref{prop:CS} with $a = \hat\mu_n$ and $b=\mu^\star - \hat\mu_n$ and \eqref{linkphimu},  a positive constant $c$ exists such that:
\begin{eqnarray*}
\lefteqn{
\| \hat f_n - f^\star \|_2^2}\nonumber\\
& \geq &  (\lambda^\star - \hat\lambda_n)^2 \| \phi - \phi_{\hat\mu_n} \|_2^2   + \{\lambda^\star\}^2 \| \phi_{\hat\mu_n} - \phi_{\mu^\star} \|_2^2 \nonumber \nonumber\\
& & \hspace{0.1cm} - 2\left| \lambda^\star - \hat\lambda_n \right| \lambda^\star \| \phi - \phi_{\hat\mu_n} \|_2 \| \phi_{\hat\mu_n} - \phi_{\mu^\star} \|_2 \left(1-c \| \phi  - \phi_{\mu^\star} \|_2^2\right) \nonumber \\
& \geq & (\lambda^\star - \hat\lambda_n)^2 \| \phi - \phi_{\hat\mu_n} \|_2^2   + \{\lambda^\star\}^2 \| \phi_{\hat\mu_n} - \phi_{\mu^\star} \|_2^2 \nonumber \nonumber\\
& & \hspace{0.1cm} - \left[ (\lambda^\star - \hat\lambda_n)^2 \| \phi - \phi_{\hat\mu_n} \|_2^2 +  \{ \lambda^\star\}^2  \| \phi_{\hat\mu_n} - \phi_{\mu^\star} \|_2^2 \right] \left(1-c \| \phi  - \phi_{\mu^\star} \|_2^2\right) \nonumber \\
& \geq & c (\lambda^\star - \hat\lambda_n)^2 \| \phi - \phi_{\hat\mu_n} \|_2^2 \| \phi - \phi_{\mu^\star} \|_2^2 
+ c \{\lambda^\star\}^2 \| \phi_{\hat\mu_n} - \phi_{\mu^\star} \|_2^2  \| \phi - \phi_{\mu^\star} \|_2^2.    \nonumber  
\end{eqnarray*}
We then obtained the crucial inequality:
\begin{equation}\label{eq:vit}
\| \hat f_n - f^\star \|_2^2 \geq  c \underline{\kappa}^2 (\lambda^\star - \hat\lambda_n)^2  \|\hat\mu_n\|^2 \|\mu^\star\|^2 + c \underline{\kappa}^2 \{\lambda^\star\}^2\|\mu^\star\|^2 \|\hat\mu_n - \mu^\star\|^2.
 \end{equation}
 
We see here the central role of the refinement of the Cauchy-Schwarz inequality (see Appendix \ref{s:CS}) to obtain a tractable bound that involves the parameters of the mixture themselves, from the bound on the $\LL^2$-norm of $\hat f_n - f^\star$. We now use the oracle inequality on $\|\hat{f}_n-f^\star\|_2^2$ to deduce that a constant $C>0$ exists such that:
\begin{equation} 
\EE \left[   (\lambda^\star - \hat\lambda_n)^2  \|\hat\mu_n\|^2 \|\mu^\star\|^2 +  \{\lambda^\star\}^2\|\mu^\star\|^2 \|\hat\mu_n - \mu^\star\|^2 \right] \leq \frac{C \log^2 n}{n}.
\label{eq:toto}
\end{equation}

In particular, we immediately deduce from \eqref{eq:toto} that: 
$$ \EE  \left[  \{\lambda^\star\}^2\|\mu^\star\|^2 \|\hat\mu_n - \mu^\star\|^2 \right] \leq \frac{C \log^2 n}{n}.$$
This result is uniform in $(\lambda^\star,\mu^\star)$, we obtain the proof of Theorem \ref{thm:vitesse_mu}. \hfill$\square$

Unfortunately, we cannot directly use a similar approach for the estimation of $\lambda^\star$. Indeed, we have to first ensure that $\hat \mu_n$ is close to $\mu^\star$ with a large enough probability. 

\subsection{Proof of Theorem \ref{thm:vitesse_lambda}}
\label{s:preuve_l}
Let $\mathcal{B}$ and $\mathcal{D}$ be the events respectively defined as: 
\begin{equation} 
\mathcal{B} = \left\lbrace \| \hat f_n - f_{\lambda_0,\mu_0} \|_2 \leq \sqrt{\frac{1 }{n}}  \right\rbrace 
\label{eq:2}
\end{equation}
and 
\begin{equation}
\label{eq:2bis}
\mathcal{D}=  \left\lbrace | \mathcal{G}_n(\hat\lambda_n, \hat\mu_n) | \leq  \frac{16 \| \phi \|_\infty  \log ( n| \mathcal{M}_n |)}{3\sqrt{n}}\right\rbrace.
\end{equation}
Below, the control of the quadratic risk of $\hat\mu_n$ will be investigated according to the partition $\mathcal{B}, \mathcal{B}^c \cap \mathcal{D}$ and $\mathcal{B}^c\cap\mathcal{D}^c$.

\paragraph{Control of the risk on $\mathcal{B}$} 
Equation \eqref{eq:m1} together with (\ref{eq:maj}) indicates that:
$$
 \|\hat{f}_n -f^\star\|_2^2\  \mathds{1}_\mathcal{B}  \leq \frac{C}{n}.
$$
Then, Equation \eqref{eq:vit} implies that:
\begin{equation} 
 \left\| \hat\mu_n - \mu^\star  \right\|^2 \mathds{1}_\mathcal{B} \leq \frac{C }{n\{\lambda^\star\}^2\|\mu^\star\|^2} \leq \frac{C \|\mu^\star\|^2}{\ell_n^2}.
\label{eq:1}
\end{equation}

\paragraph{Control of the risk on $\mathcal{B}^c \cap \mathcal{D}$}
On the set $\mathcal{B}^c \cap \mathcal{D}$, we apply Inequality \eqref{eq:inter1}, which yields:
\begin{eqnarray*}
\|\hat{f}_n -f^\star\|_2^2\ \mathds{1}_{\mathcal{B}^c \cap \mathcal{D}}
& \leq& \frac{(1+\alpha)}{(1-\alpha)} \|f_{\lambda_0,\mu_0}-f^\star\|_2^2+\frac{1}{2\alpha} | \mathcal{G}_n(\hat\lambda_n, \hat\mu_n) |^2\ \mathds{1}_{\mathcal{B}^c \cap \mathcal{D}} \\
&\leq& C \frac{\log^2 (n|\mathcal{M}_n|)}{n}
\end{eqnarray*}
for some positive constant $C$. Since the size of $|\MMn|$ is a polynomial of $n$,  we can find a constant $C$ 
such that Equation \eqref{eq:vit} leads to:
\begin{equation}
  \left\|  \hat\mu_n - \mu^\star \right\|^2 \mathds{1}_{\mathcal{B}^c \cap \mathcal{D}} \leq C \frac{\log^2 n}{n\{\lambda^\star\}^2\|\mu^\star\|^2} \leq C \frac{\log^2 n}{\ell_n^2} \|\mu^\star\|^2.
\label{eq:3}
\end{equation}
Since we assume that $(\lambda^\star,\mu^\star) \in \TThl$ with 
$\ell_n / \log n \longrightarrow + \infty$ when $n \longrightarrow + \infty$, Equations \eqref{eq:1} and \eqref{eq:3} imply that for large enough $n$,
\begin{equation*}
\left\| \hat\mu_n - \mu^\star\right\|^2 \left[ \mathds{1}_{\mathcal{B}} +  \mathds{1}_{\mathcal{B}^c \cap \mathcal{D}} \right]\leq \frac{\|\mu^{\star}\|^2}{4}.
\end{equation*}
Remark that  for any $x$ and $y$: $\|x-y\| \leq \frac{\|y\|}{2}$ implies that $\|y\| \geq 2 \|y\|-2\|x\|$ (using the triangle inequality), which in turns yields $\|y\| \leq 2 \|x\|$. Applying this simple remark to the former inequality leads to:
\begin{equation}\label{eq:majomu}\|\mu^\star\|^2  \left[ \mathds{1}_{\mathcal{B}} +  \mathds{1}_{\mathcal{B}^c \cap \mathcal{D}} \right]\leq 4 \|\hat\mu_n\|^2 \left[ \mathds{1}_{\mathcal{B}} +  \mathds{1}_{\mathcal{B}^c \cap \mathcal{D}} \right].\end{equation}

\paragraph{Control of the risk on $\mathcal{B}^c \cap \mathcal{D}^c$} 
Applying (\ref{controlG}) we can check that: 
$$
\PP( \mathcal{B}^c \cap \mathcal{D}^c ) \leq \PP( \mathcal{D}^c)  \leq \frac{C}{n} 
$$
for some positive constant $C$. 

\paragraph{Synthesis}
Using \eqref{eq:majomu}, a large enough $N$ exists  such that for $n \geq N$:
\begin{eqnarray*}
\lefteqn{\EE [ (\hat\lambda_n- \lambda^\star)^2 \|\mu^\star\|^4 ] }\\
& = & \EE [ (\hat\lambda_n- \lambda^\star)^2 \|\mu^\star\|^4 (\mathds{1}_{\mathcal{B}}+\mathds{1}_{\mathcal{B}^c\cap\mathcal{D}})] + \EE [ (\hat\lambda_n - \lambda^\star)^2 \|\mu^\star\|^4 \mathds{1}_{\mathcal{B}^c\cap\mathcal{D}^c}] , \\
& \leq & 4 \EE [ (\hat\lambda_n - \lambda^\star)^2 \|\mu^\star\|^2 \|\hat\mu_n\|^2 ] + d^2 M^4 \PP(\mathcal{D}^c),\\
& \leq & \frac{C \log^2(n)}{n},
\end{eqnarray*}
for some constant $C>0$, according to \eqref{eq:toto}. This result being uniform in $(\lambda^\star,\mu^\star)$, we obtain the proof   of Theorem \ref{thm:vitesse_lambda}. \hfill $\square$

\section{Link between the $\|.\|_2$ norm and the Wasserstein distance(s) }\label{sec:metric_proof}

\begin{proof}[Proof of Theorem \ref{theo:L2W2}]

Below, we will establish that the following inequality (stated in Theorem \ref{theo:L2W2}) holds:
\begin{equation}
\label{eq:comparaisonWL2}
W_2^4(G_{\lambda,\mu},G_{\lambda',\mu'}) \lesssim \|f_{\lambda,\mu}-f_{\lambda',\mu'}\|_2^2.
\end{equation}

\noindent \textit{Expression of $W_2$:}
below, we make explicit the link between the $\mathbb{L}^2-$loss on the densities $f_{\lambda,\mu}$ and $f_{\lambda',\mu'}$ and the Wasserstein distance between 
$
G_{\lambda,\mu}= (1-\lambda) \delta_{0} + \lambda \delta_{\mu}$ and $G_{\lambda',\mu'}= (1-\lambda') \delta_{0} + \lambda' \delta_{\mu'},
$
where $\delta_{a}$ refers to the Dirac mass at point $a$. First, we provide an expression for the term $W_2(G_{\lambda,\mu},G_{\lambda',\mu'})$. Since the role played by $(\lambda,\mu)$ and $(\lambda',\mu')$ is symmetric, in the following, we assume without loss of generality that $\lambda \leq \lambda'$.  First, the quantity $W_2(G_{\lambda,\mu},G_{\lambda',\mu'})$ can be rewritten as
$$ W_2^2(G_{\lambda,\mu},G_{\lambda',\mu'}) = \inf_{q\in \mathcal{Q}} \left[ q_{12} \| \mu'\|^2 + q_{21} \| \mu \|^2 + q_{22} \| \mu - \mu' \|^2   \right],$$
where 
\begin{eqnarray*}
\mathcal{Q} &=& \left\lbrace q=(q_{11},q_{12},q_{21},q_{22}) \in [0,1]^4: \right. \\
& & \hspace*{0.3cm} \left. q_{11}+q_{12} = 1-\lambda \ ; \ q_{21} + q_{22} = \lambda \ ; \ q_{11}+q_{21} = 1-\lambda' \ ; \ q_{12}+q_{22} = \lambda'    \right\rbrace.
\end{eqnarray*}
After some computations, the set $\mathcal{Q}$ can be rewritten as
$$ \mathcal{Q} = \left\lbrace q \in [0,1]^4: \ q_{12} = \lambda' - q_{22} \ ; \ q_{21}  = \lambda - q_{22} \ ; \ q_{11} = 1- \lambda - \lambda' + q_{22} \right\rbrace.$$
Hence, 
$$ 
W_2^2(G_{\lambda,\mu},G_{\lambda',\mu'}) = \inf_{q_{22} \in [ (\lambda+\lambda'-1)\vee 0 , \lambda]} \left[ (\lambda' - q_{22}) \| \mu' \|^2 + (\lambda - q_{22}) \| \mu \|^2 + q_{22} \| \mu - \mu' \|^2   \right].
$$
The last equation yields
\begin{eqnarray} 
\lefteqn{W_2^2(G_{\lambda,\mu},G_{\lambda',\mu'})}  \label{eq:revision1}\\
 & \hspace{-0.3cm}= & \hspace{-0.3cm} \left\lbrace
\begin{array}{lcl}
(\lambda' - \lambda) \|\mu' \|^2 + \lambda \| \mu-\mu' \|^2 & \mathrm{if} & \| \mu \|^2 + \|\mu' \|^2 \geq \| \mu - \mu' \|^2, \\ 
  & & \\
\lambda \| \mu \|^2 + \lambda' \| \mu' \|^2 & \mathrm{if} & \| \mu \|^2 + \|\mu' \|^2 < \| \mu - \mu' \|^2 \ \mathrm{and} \ \lambda +\lambda' \leq 1,\\
 & & \\
(1-\lambda') \| \mu \|^2 + (1-\lambda) \| \mu' \|^2  &  \mathrm{if} & \| \mu \|^2 + \|\mu' \|^2 < \| \mu - \mu' \|^2 \ \mathrm{and} \ \lambda +\lambda' > 1.\\
+ (\lambda + \lambda' - 1 ) \| \mu - \mu' \|^2 & \\
\end{array} \right. \nonumber
\end{eqnarray}

\noindent \textit{Upper bound on $W_2$:}
The previous expression for $W_2(G_{\lambda,\mu},G_{\lambda',\mu'})$ allows to prove that
\begin{equation} 
W_2^2(G_{\lambda,\mu},G_{\lambda',\mu'}) \leq  (\lambda'-\lambda)\|\mu'\|^2+ \lambda\|\mu-\mu'\|^2 .
\label{eq:revision2}
\end{equation}
Indeed, according to (\ref{eq:revision1}), this bound turns to be an equality when $\| \mu \|^2 + \|\mu' \|^2 \geq \| \mu - \mu'\|^2$. When, $\| \mu \|^2 + \|\mu' \|^2 < \| \mu - \mu' \|^2 \ \mathrm{and} \ \lambda +\lambda' \leq 1$, we have 
\begin{eqnarray*} 
W_2^2(G_{\lambda,\mu},G_{\lambda',\mu'}) & = & (\lambda' -\lambda) \|\mu' \|^2 + \lambda \| \mu-\mu' \|^2 + \lambda (\| \mu' \|^2 + \| \mu \|^2 - \| \mu- \mu' \|^2) \\
& \leq &  (\lambda' -\lambda) \|\mu' \|^2 + \lambda \| \mu-\mu' \|^2.
\end{eqnarray*}
In the last case displayed in (\ref{eq:revision1}), namely when $\| \mu \|^2 + \|\mu' \|^2 < \| \mu - \mu' \|^2 \ \mathrm{and} \ \lambda +\lambda' > 1$, we obtain 
\begin{eqnarray*}
W_2^2(G_{\lambda,\mu},G_{\lambda',\mu'})  & = & (1-\lambda') \| \mu \|^2 + (1-\lambda) \| \mu' \|^2 + (\lambda + \lambda' - 1 ) \| \mu - \mu' \|^2 \\
& = & (\lambda' - \lambda) \| \mu' \|^2 + \lambda \| \mu - \mu' \|^2 + (1-\lambda') \left[ \| \mu' \|^2 + \| \mu \|^2 - \| \mu- \mu' \|^2 \right]. \\
& \leq & (\lambda' -\lambda) \|\mu' \|^2 + \lambda \| \mu-\mu' \|^2.
\end{eqnarray*}
This entails (\ref{eq:revision2}). We get from this inequality, still assuming $\lambda \leq \lambda'$
\begin{eqnarray*}
W_2^2(G_{\lambda,\mu},G_{\lambda',\mu'}) 
& \leq  & (\lambda'-\lambda)\|\mu'\|^2+ \lambda\|\mu-\mu'\|^2 \\
& \leq & (\lambda'-\lambda)\|\mu'\|^2+ \lambda( \| \mu\| + \| \mu' \|)\|\mu-\mu'\|, \\
& \leq & (\lambda'-\lambda)\|\mu'\|^2+ ( \lambda\| \mu\| + \lambda'\| \mu' \|)\|\mu-\mu'\|, \\
& \leq & (\lambda'-\lambda)\|\mu'\| \| \mu \| +  (\lambda'-\lambda)\|\mu'\| \| \mu - \mu'\| + ( \lambda\| \mu\| + \lambda'\| \mu' \|)\|\mu-\mu'\|, \\
& \leq & (\lambda'-\lambda)\|\mu'\| \| \mu \|  + 2( \lambda\| \mu\| + \lambda'\| \mu' \|)\|\mu-\mu'\|.
\end{eqnarray*}
From this latter inequality, we obtain 
\begin{equation}
W_2^4(G_{\lambda,\mu},G_{\lambda',\mu'}) \leq 8 \left[ (\lambda'-\lambda)^2 \|\mu'\|^2 \| \mu \|^2 +  ( \lambda\| \mu\| + \lambda'\| \mu' \|)^2 \|\mu-\mu'\|^2 \right].
\label{eq:revision3}
\end{equation}
In the other hand, Inequality (\ref{eq:vit}) indicates that
$$\| f_{\lambda,\mu} - f_{\lambda',\mu'} \|_2^2 \geq  c \underline{\kappa}^2 (\lambda' - \lambda)^2  \|\mu\|^2 \|\mu'\|^2 + c \underline{\kappa}^2 \{\lambda'\}^2\|\mu'\|^2 \|\mu - \mu'\|^2.$$
Since the role played by $(\lambda,\mu)$ and $(\lambda',\mu')$ is symmetric, we obtain in fact 
$$ \| f_{\lambda,\mu} - f_{\lambda',\mu'} \|_2^2 \geq  c \underline{\kappa}^2 (\lambda' - \lambda)^2  \|\mu\|^2 \|\mu'\|^2 + \frac{c \underline{\kappa}^2}{2} \left(  \{\lambda'\}^2\|\mu'\|^2 + \{\lambda\}^2\|\mu\|^2 \right) \|\mu - \mu'\|^2,$$
which together with (\ref{eq:revision3}) implies \eqref{eq:comparaisonWL2}. Using this inequality with $f_{\hat\lambda_n,\hat\mu_n}$ and $f_{\lambda^\star,\mu^\star}$, and according to Theorem~\ref{th:oracle}, we conclude the proof of Theorem \ref{theo:L2W2}.
\end{proof}


\newpage
\appendix
\section{Technical results}\label{s:tec}

\subsection{Identifiability result}

\begin{proof}[Proof of Proposition \ref{prop:id}]
We assume that two parameters $\theta_1=(\lambda_1,\mu_1)$ and $\theta_2=(\lambda_2,\mu_2)$ exist such that $f_{\theta_1}=f_{\theta_2}$. In that case, consider the Fourier transform of $X$ whose density is $f_{\theta_1}$. This Fourier transform is given by
$$
\varphi_X(\xi) 
= \mathbb{E}[e^{\ii \xi \ps X}] 
= \left[ (1-\lambda_1) + \lambda_1 e^{\ii \xi \ps \mu_1}\right] \hat{\phi}(\xi),
$$
where $\hat{\phi}$ is the Fourier transform of $\phi$ and $\ii$ is the complex number such that $\ii^2=-1$. Since $f_{\theta_1} = f_{\theta_2}$, we then deduce that:
$$
\forall \xi \in \RR^d \qquad 
\left[ (1-\lambda_1) + \lambda_1 e^{\ii \xi \ps \mu_1}\right] \hat{\phi}(\xi) = \left[ (1-\lambda_2) + \lambda_2 e^{\ii \xi \ps \mu_2}\right] \hat{\phi}(\xi).
$$
Since $\phi \in \LL^1(\RR^d)$, $\hat{\phi}$ is continuous and cannot be zero everywhere. Thus, we can find an open set $I \subset \RR^d$ such that $\hat{\phi}(\xi) \neq 0$ in $I$ and the Lebesgue measure of $I$ is strictly positive. Hence, 
$$
\forall \xi \in I \quad (1-\lambda_1) + \lambda_1 e^{\ii \xi \ps \mu_1} = (1-\lambda_2) + \lambda_2 e^{\ii \xi  \ps \mu_2},
$$
and from the analytical property of the exponential map, we deduce that:
$$
\forall \xi \in I \qquad (1-\lambda_1) + \lambda_1 [ \cos(\xi \ps\mu_1)+ \ii \sin (\xi\ps \mu_1)] = (1-\lambda_2) + \lambda_2 [ \cos(\xi \ps\mu_2)+ \ii \sin (\xi \ps\mu_2)] 
$$
Identifying now the imaginary parts yields: 
$$
\forall \xi \in I \qquad \lambda_1 \sin ( \xi \ps\mu_1)=\lambda_2 \sin ( \xi \ps \mu_2).
$$
If we write $\mu_1=(\mu_1^{(1)},\ldots,\mu_1^{(d)})$ and $\mu_2=(\mu_2^{(1)},\ldots,\mu_2^{(d)})$, we deduce that
\begin{align*}
\forall \xi=(\xi_1,\ldots,\xi_d): \quad  &
\lambda_1 \left[\sin ( \xi_1 \mu_1^{(1)}) \cos (\sum_{j=2}^d \xi_j \mu_1^{(j)}) + \cos (\xi_1 \mu_1^{(1)}) \sin (\sum_{j=2}^d \xi_j \mu_1^{(j)})\right] \\
& = \lambda_2 \left[\sin ( \xi_1 \mu_2^{(1)}) \cos (\sum_{j=2}^d \xi_j \mu_2^{(j)}) + \cos (\xi_1 \mu_2^{(1)}) \sin (\sum_{j=2}^d \xi_j \mu_2^{(j)})\right].
\end{align*}
Considering now the function of the variable $\xi_1$, it is classical that the family of functions $(\xi_1 \mapsto \sin (\alpha_1 \xi_1),\xi_1\mapsto \sin(\alpha_2 \xi_1))$ is linearly independent if and only if $|\alpha_1| \neq |\alpha_2|$. We  can deduce that, necessarily, $\mu_1^{(1)} = \pm \mu_2^{(1)}$ and therefore
$\cos (\xi_1 \mu_1^{(1)}) = \cos (\xi_1 \mu_2^{(1)})$, which shows that
$ \lambda_1 \sin (\sum_{j=2}^d \xi_j \mu_1^{(j)}) = \lambda_2 \sin (\sum_{j=2}^d \xi_j \mu_2^{(j)})$ for all $\xi \in I$. We then end the argument with an easy recursion: we obtain that $\lambda_1 \sin (\xi_d \mu_1^{(d)}) = \lambda_2 \sin (\xi_d \mu_2^{(d)})$ so that $\mu_1^{(d)} = \pm \mu_{2}^{(d)}$. 
Since $\lambda_1$ and $\lambda_2$ are positive,  then $\mu_1^{(d)}=\mu_2^{(d)}$, which in turn implies that $\mu_1^{(j)}=\mu_2^{(j)}$ for all the coordinates $j \in \{1,\ldots,d\}$.
\end{proof}

\subsection{Connection between $\|\phi-\phi_{\mu}\|_2$ and $|\mu|$}

\begin{prop}
\label{prop:cnorme} Let any $M>0$ be given and assume that $\phi$ satisfies $\HS$ and $\HL$, then
two constants $0<\underline{\kappa}<\overline{\kappa}<+\infty$ exist such that:
\begin{equation}\label{eq:norme2_mu}
\forall (\mu,\tilde\mu) \in [-M,M]^d\times[-M,M]^d \qquad 
 \underline{\kappa}\|\mu-\tilde\mu\|^2 \leq \|\phi_\mu - \phi_{\tilde\mu}\|_2^2 \leq \overline{\kappa} \|\mu-\tilde\mu\|^2.
 \end{equation}
\end{prop}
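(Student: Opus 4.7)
Both inequalities are translation-invariant: setting $\nu := \mu - \tilde\mu \in [-2M,2M]^d$, a change of variable gives
\[
\|\phi_\mu - \phi_{\tilde\mu}\|_2^2 = \|\phi - \phi_\nu\|_2^2,
\]
so it suffices to prove a two-sided bound for $\|\phi-\phi_\nu\|_2^2$ in terms of $\|\nu\|^2$ on the compact set $K_0 := [-2M,2M]^d$. The upper bound is then immediate from $\HL$ (applied on $K_0$, which is allowed since $\HL$ is stated with a generic constant $M$ that we may enlarge): $|\phi(x) - \phi_\nu(x)| \leq \|\nu\| g(x)$ with $g \in \LL^2(\RR^d)$ yields $\overline\kappa := \|g\|_2^2$.

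\textbf{Local lower bound via a quadratic form.} For the lower bound, the smoothness of $\phi$ from $\HS$ yields the representation
\[
\phi(x) - \phi_\nu(x) = \int_0^1 \nabla \phi(x - s\nu) \cdot \nu \, ds,
\]
whence, by Fubini,
\[
\|\phi - \phi_\nu\|_2^2 = \nu^\top B_\nu \, \nu, \qquad (B_\nu)_{ij} := \int_0^1\!\!\int_0^1 \int_{\RR^d} \partial_i \phi(x-s\nu)\, \partial_j \phi(x-t\nu) \, dx\, ds\, dt.
\]
By taking directional difference quotients in $\HL$ along the coordinate axes and letting the step tend to $0$, one has the pointwise bound $|\partial_i \phi(x)| \leq g(x)$, so $\nabla\phi \in \LL^2(\RR^d)$ and dominated convergence gives $B_\nu \xrightarrow{\nu \to 0} A$, where $A_{ij} := \int_{\RR^d} \partial_i \phi \, \partial_j \phi$.

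\textbf{Non-degeneracy of $A$ (the key step).} I claim that $A$ is positive definite. Indeed, if $c \in \RR^d$ satisfies $c^\top A c = \|c \cdot \nabla\phi\|_2^2 = 0$, then $c \cdot \nabla\phi = 0$ in $\LL^2$; taking the Fourier transform yields $(c\cdot\xi)\,\hat\phi(\xi) = 0$ for a.e.\ $\xi$. Since $\phi \in \LL^1(\RR^d)$, $\hat\phi$ is continuous with $\hat\phi(0) = 1$, hence $\hat\phi \neq 0$ on some neighborhood of the origin, which forces $c=0$. Let $\alpha := \lambda_{\min}(A) > 0$. By continuity of $\nu \mapsto B_\nu$ at $0$, there exists $r_0 > 0$ such that $\lambda_{\min}(B_\nu) \geq \alpha/2$ for $\|\nu\| \leq r_0$, giving $\|\phi-\phi_\nu\|_2^2 \geq (\alpha/2)\|\nu\|^2$ on this region.

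\textbf{Patching via compactness and identifiability.} On the compact set $\{\nu \in K_0 : \|\nu\| \geq r_0\}$, the map $\nu \mapsto \|\phi - \phi_\nu\|_2^2$ is continuous (by strong $\LL^2$-continuity of translations, since $\phi \in \LL^2$) and strictly positive by the identifiability statement in Proposition~\ref{prop:id}. Hence it attains a positive minimum $\beta > 0$, and so $\|\phi-\phi_\nu\|_2^2 \geq \beta \geq \frac{\beta}{4M^2 d}\|\nu\|^2$ on this annular region. Choosing $\underline\kappa := \min\{\alpha/2,\ \beta/(4M^2 d)\}$ concludes the proof. The principal obstacle is the non-degeneracy argument for $A$; the Taylor representation only yields a \emph{local} quadratic lower bound, so the combination of compactness and identifiability is essential to cover the displacements of moderate size.
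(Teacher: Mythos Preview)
Your proof is correct and follows the same two-part architecture as the paper: the upper bound comes directly from $\HL$, and the lower bound is obtained by a local analysis near $\nu=0$ combined with compactness and identifiability away from $0$.

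The one substantive difference lies in the local step. The paper argues along a fixed direction $e$, showing via dominated convergence that $\|\phi-\phi_{\mu}\|_2^2/\|\mu\|^2 \to \|d_e[\phi]\|_2^2>0$, and then asserts that the ratio map extends continuously to all of $[-M,M]^d$ so that its infimum is positive. You instead write $\|\phi-\phi_\nu\|_2^2=\nu^\top B_\nu\,\nu$ and prove that the limiting Gram matrix $A=\bigl(\langle\partial_i\phi,\partial_j\phi\rangle\bigr)_{ij}$ is positive definite via the Fourier argument. Your formulation is somewhat tidier on this point: the paper's claim that $\psi(\mu)=\|\phi-\phi_\mu\|_2^2/\|\mu\|^2$ belongs to $\mathcal{C}^0([-M,M]^d)$ glosses over the fact that the limit at $0$ is direction-dependent (it equals $e^\top A e$), whereas your quadratic-form setup makes the uniformity over directions explicit through $\lambda_{\min}(A)>0$, and your separate treatment of the compact annulus $\{\,r_0\le\|\nu\|\,\}$ avoids that ambiguity. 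Conversely, the paper's argument is shorter once one accepts that the directional limits are uniformly positive by compactness of the sphere. Either route yields the same constants and the same conclusion.
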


\begin{proof}
We prove the upper and lower bounds separately. According to the shift invariance of the $\LL^2$ norm, we only establish these inequalities when $\tilde{\mu}=0$. Using $\HL$, the upper bound simply derives from:
$$
\|\phi  - \phi_{\mu}\|_2^2 = \int_{\RR^d} \left[\phi(x)-\phi(x-\mu)\right]^2 dx \leq \int_{\RR^d} \|\mu\|^2 g^2(x) dx  = \|\mu\|^2 \|g\|_2^2,
$$
which is the desired inequality if we choose $\overline{\kappa}=\|g\|^2$. Concerning the lower bound, we have:
$$
\frac{\|\phi(.) - \phi(.-\mu)\|_2^2}{\|\mu\|^2} = \int_{\RR^d} \left[\frac{\phi(x)-\phi(x-\mu)}{\|\mu\|}\right]^2 dx.
$$
We write $\mu=\|\mu\| e$ where $e$ is a unit vector of the sphere.
Inequality \eqref{eq:glip} brought by Assumption $\HL$ makes it possible to apply the Lebesgue convergence theorem, which implies:
\begin{eqnarray*} 
\lim_{\|\mu\| \longrightarrow 0} \frac{\|\phi(.) - \phi(.-\mu)\|^2}{\|\mu\|^2} 
& = & \int_{\RR^d} \lim_{\|\mu\|\rightarrow 0}\left[\frac{\phi(x)-\phi(x-\mu)}{\|\mu\|}\right]^2 dx ,\\
& = & \|\nabla \phi \bullet e \|^2 = \|d_e[\phi]\|^2> 0.
\end{eqnarray*}
Indeed, $\phi$ being differentiable ($\phi \in \mathcal{C}^1(\RR^d)$), $
  \frac{\phi(x)-\phi(x-\mu)}{\|\mu\|} \longrightarrow d_e[\phi](x)$ almost surely when $\|\mu\| \longrightarrow 0$.

Now, $\phi$ is continuous and 
$\psi: 
\mu \longrightarrow \frac{\|\phi - \phi_{\mu}\|_2^2}{\|\mu\|^2} \in \mathcal{C}^{0}([-M,M]^d,\RR)
$ from the Lebesgue convergence theorem.  This continuous map $\psi$ attains its lower bound on $[-M,M]^d$ and the identifiability result of Proposition \ref{prop:id} implies that this lower bound is positive. This leads to the existence of $\underline{\kappa}>0$ such that:
$$ \|\phi - \phi_{\mu}\|_2^2 \geq \underline{\kappa}  \|\mu\|^2.$$
\end{proof}

\subsection{Log-concave distributions}\label{s:log_concave}
In this section, we establish that most of the log-concave real distributions satisfy the assumptions $\HS,\HL$ and $\HD$. For this purpose, we introduce the associated class of probability measures:
$$
\mathcal{LC} := \left\{ \phi(.) = e^{-u(.)}: u \text{ is convex}, u \in \mathcal{C}^2(\RR^d) \, \text{and} \, \|\nabla u\|+\|D^2 u\|=o_{ \infty}(u) \right\}. 
$$
The set of possible densities is rich and contains Gaussian or Gamma distributions. However, the set $\mathcal{LC}$ does not capture the situation where $u(x) = e^{|x|}$ or $u(x)=e^{x^2}$ since $u$ exhibits variations that are too great for large values of $x$.

\begin{prop}\label{prop:lc}
Assume that $\mu$ varies in $[-M,M]^d$ and that $\phi \in \mathcal{LC}$. Let $\varepsilon \in (0,M)$. If we set:
$$
	g(x) := g_1(x) \vee g_2(x) \vee g_3(x)$$
	with
{\small
	$$
	g_1(x) := 
	\sqrt{\frac{\sup_{e \in \mathcal{S}^1} \int_{[x-M e,x]} \langle \nabla \phi(t),e\rangle^2 dt}{\varepsilon}}, \, 
	g_2(x) := 
	\sqrt{\frac{\sup_{e \in \mathcal{S}^1} \int_{[x,x+M e ]} \langle \nabla \phi(t),e\rangle^2 dt}{\varepsilon}},$$
	and $$	g_3(x) := \underset{t\in B(x,\varepsilon)}{\sup} \|\nabla \phi(t)\|.$$}
Then, $\HL$ and $\HD$ hold:
\begin{itemize}
\item[$i)$] $\forall \mu \in [-M,M]^d \quad \forall x \in \RR^d \qquad |\phi(x)-\phi_{\mu}(x)| \leq \|\mu\| \,  g(x).$
\item[$ii)$]  $ g \phi^{-1/2}\in \mathbb{L}^2(\RR^d)$
\item[$iii)$] $D^2\phi\ \phi^{-1/2} \in\mathbb{L}^2(\RR^d) $
\end{itemize}
\end{prop}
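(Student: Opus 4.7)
The plan is to establish (i), (ii), (iii) in turn, treating (i) by splitting according to whether $\|\mu\|\leq\varepsilon$ or $\|\mu\|>\varepsilon$. For (i), I would write $\mu=\|\mu\|e$ with $e\in\mathcal{S}^1$ and expand by the fundamental theorem of calculus:
\[
\phi(x)-\phi_\mu(x)=\int_0^{\|\mu\|}\langle\nabla\phi(x-te),e\rangle\,dt.
\]
If $\|\mu\|\leq\varepsilon$, the segment $\{x-te:0\leq t\leq\|\mu\|\}$ lies in $B(x,\varepsilon)$, so the integrand is pointwise bounded by $g_3(x)$, giving $|\phi(x)-\phi_\mu(x)|\leq\|\mu\|\,g_3(x)$. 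If $\|\mu\|>\varepsilon$, Cauchy--Schwarz and $\|\mu\|\leq M$ yield
\[
(\phi(x)-\phi_\mu(x))^2\leq\|\mu\|\int_0^{M}\langle\nabla\phi(x-te),e\rangle^2\,dt,
\]
and the change of variables $u=x-te$ turns the right-hand side into a line integral over $[x-Me,x]$, which by definition of $g_1$ is at most $\|\mu\|\,\varepsilon\,g_1(x)^2\leq\|\mu\|^2 g_1(x)^2$ (since $\varepsilon\leq\|\mu\|$). Replacing $e$ by $-e$ gives the segment $[x,x+Me]$ and brings in $g_2$. Taking the maximum yields $|\phi(x)-\phi_\mu(x)|\leq\|\mu\|\,g(x)$.

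For (ii), since $g^2\leq g_1^2+g_2^2+g_3^2$, it suffices to control $\int g_k^2/\phi$ for each $k$. The key identity is $\nabla\phi=-(\nabla U)\phi$, hence $\|\nabla\phi(u)\|^2=\|\nabla U(u)\|^2\phi(u)^2$. To relate $\phi(u)$ to $\phi(x)$ when $u\in B(x,M)$, I would use the convexity lower bound $U(u)\geq U(x)+\langle\nabla U(x),u-x\rangle\geq U(x)-M\|\nabla U(x)\|$, which yields $\phi(u)^2/\phi(x)\leq\phi(x)\,e^{2M\|\nabla U(x)\|}$. The hypothesis $\|\nabla U\|=o_\infty(U)$ implies $2M\|\nabla U(x)\|\leq U(x)/2$ outside a compact set $K$, so $\phi(u)^2/\phi(x)\leq e^{-U(x)/2}$ there. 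Combining with $\|\nabla U(u)\|^2\lesssim U(x)^2$ on $B(x,M)$ (obtained by transporting the bound from $x$ using $\|D^2 U\|=o_\infty(U)$), we obtain $\|\nabla\phi(u)\|^2/\phi(x)\lesssim U(x)^2 e^{-U(x)/2}$ outside $K$, which is integrable because $U(x)\to\infty$ as $\|x\|\to\infty$ (a standard consequence of convexity and integrability of $\phi$). The segment integrals defining $g_1^2$ and $g_2^2$ are bounded by $M\sup_{u\in B(x,M)}\|\nabla\phi(u)\|^2/\varepsilon$, reducing them to the same estimate, and local continuity of $1/\phi$ disposes of the compact piece.

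For (iii), direct differentiation gives $D^2\phi=\bigl(\nabla U\otimes\nabla U-D^2 U\bigr)\phi$, so
\[
\|D^2\phi(x)\|^2/\phi(x)\leq C\bigl(\|\nabla U(x)\|^4+\|D^2 U(x)\|^2\bigr)\phi(x).
\]
The hypotheses give $\|\nabla U\|^4+\|D^2 U\|^2=o_\infty(U^4)$, hence outside a compact set $\|D^2\phi(x)\|^2/\phi(x)\leq C\,U(x)^4 e^{-U(x)}$, which is integrable; local continuity dispatches the compact part, and $D^2\phi\cdot\phi^{-1/2}\in\mathbb{L}^2(\mathbb{R}^d)$.

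The hard part will be step (ii) for $g_1$ and $g_2$: unlike in (i), the relevant scale is the fixed $M$ rather than the adjustable $\varepsilon$, so $\phi(u)$ and $\phi(x)$ on $B(x,M)$ are \emph{a priori} only comparable through the exponential factor $e^{M\|\nabla U(x)\|}$, which is not small. The growth condition $\|\nabla U\|=o_\infty(U)$ is exactly what is needed to convert this factor into $e^{U(x)/2}$, so that the remaining $e^{-U(x)/2}$ absorbs polynomial prefactors; the complementary ingredient is $\|D^2 U\|=o_\infty(U)$, used to propagate bounds on $\|\nabla U\|$ from $x$ to nearby points $u$.
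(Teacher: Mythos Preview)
Your proof is correct. Part (i) matches the paper exactly. For (ii), you diverge: the paper does not bound the line integral by the supremum of the integrand, but instead uses the monotonicity of $U_e'(s):=\langle\nabla U(x+(s-M)e),e\rangle$ (from convexity) to write $\int_0^M (U_e')^2 e^{-2U_e}\le U_e'(M)\int_0^M U_e' e^{-2U_e}$ and then integrates the second factor exactly, obtaining $g_1^2(x)/\phi(x)\lesssim \|\nabla U(x)\|\,e^{-(1-\eta)U(x)}$; integrability then follows because this is, up to constants, a partial derivative of $e^{-(1-\eta)U}$. Your route---bounding by $\sup_{B(x,M)}\|\nabla\phi\|^2$ and transporting bounds on $U$ and $\nabla U$ from $x$ to $u$ via convexity and $\|\nabla U\|=o_\infty(U)$---is more elementary and sidesteps the paper's (somewhat under-justified) assumption that $U_e'>0$ on the whole segment, at the price of needing the at-least-linear growth of $U$ (a consequence of convexity plus integrability of $e^{-U}$) to conclude that $U^2 e^{-U/2}\in\mathbb{L}^1$. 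For (iii), the paper again reduces to total derivatives by an integration-by-parts device, whereas your direct bound $(\|\nabla U\|^4+\|D^2 U\|^2)\phi\lesssim U^4 e^{-U}$ is cleaner and rests on the same linear-growth fact.
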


\begin{proof}
We provide a proof in the case when $\phi \in \mathcal{C}^2$. This proof can be extended when $\phi \in \mathcal{C}^2_p$ according to some small modifications that are left to the reader, it then makes possible to extend our results to the Laplace distributions for example.\\
\underline{Proof of $(i)$:}
Remark first that $\forall \mu \in [-M,M]^d$, a unit vector $e \in \mathcal{S}^1$ exists such that $\mu = \|\mu\| e$ and in that case
$$
 \forall x \in  \RR^d \qquad 
|\phi(x)-\phi_{\mu}(x)| = \left| \int_{[x-\mu,x]} \langle \nabla \phi(t),e\rangle dt \right| \leq \sqrt{\|\mu\|} \sqrt{\int_{[x-\mu, x]}  \langle \nabla \phi,e\rangle ^2},
$$
where $[x-\mu,x]$ refers to the segment that joins $x-\mu$ to $x$ in $\RR^d$ and the last upper bound comes from the Cauchy-Schwarz inequality. Let $\varepsilon \in (0,M)$. If $\|\mu\| \in [\varepsilon, M]$, 
we obtain that: 
$$
	|\phi(x)-\phi_{\mu}(x)| \leq \|\mu\| \left( g_1(x)  \vee  g_2(x)\right),
$$
where $g_1$ and $g_2$ are defined in the statement of the Proposition.
Finally, we should remark that if $\|\mu\|\in [0,\varepsilon)$, then
$$
	|\phi(x) - \phi_\mu(x)| \leq \|\mu\|  \underset{t\in B(x,\varepsilon)}{\sup} \|\nabla \phi(t)\| := \|\mu\| g_{3}(x).
$$
It proves that $g=g_1 \vee g_2 \vee g_3$ satisfies the desired inequality.

\underline{Proof of $(ii)$:}
In order to prove that $g \phi^{-1/2} \in \mathbb{L}^2(\RR^d)$, we separately prove that $g_1^2 \phi^{-1}, g_2^2 \phi^{-1}$ and $g_{3}^2 \phi^{-1}$ belong to $\LL^1(\mathbb{R}^d)$.
We should remark that since $g_1, g_2$ and $g_3$ are continuous functions, then we only have to check the integrability when $\|x\| \longrightarrow + \infty$.
$g_1$ and $g_2$ are rather similar and we only handle the integrability of $g_1^2 \phi^{-1}$.\\

We write
\begin{eqnarray*}
g_1^2(x) \phi^{-1}(x)& = &\varepsilon^{-1} e^{u(x)} \sup_{e \in \mathcal{S}^1} \int_{[x-M e,x]} \langle \nabla \phi(t),e\rangle^2 dt \\
& = & \varepsilon^{-1} \sup_{e \in \mathcal{S}^1}  e^{u(x)}  \int_{[x-M e,x]} \langle \nabla \phi(t),e\rangle^2 dt\\
& = &  \varepsilon^{-1} \sup_{e \in \mathcal{S}^1}  \underbrace{e^{u(x)}  \int_{[x-M e,x]} \langle \nabla u(t),e\rangle^2  e^{-2 u(t)} dt}_{:=G_e(x)}.
\end{eqnarray*}
At this stage, we are driven to consider the $1$-dimensional fonction $u_e(t) = u(x+(t-M) e)$, which is a convex function. We then have
$$
G_e(x) = e^{u_e(M)} \int_{0}^M u'_e(s)^2 e^{-2 u_e(s)} ds.
$$
We shall now produce a $1$-dimension argument with the convex function $u_e$. We assume that $u_e(M) \geq u_e(0)$, and know that $u_e'$ is an increasing map and positive: 
\begin{eqnarray*}
G_e(x) & \leq & u_e'(M) e^{u_e(M)} \int_{0}^M u_e'(s) e^{-2u_e(s)}  ds \\
& \leq & \langle \nabla u(x),e \rangle e^{u(x)} \frac{e^{-2u(x-M e)}-e^{-2 u(x)}}{2}\\
& \leq & \frac{\langle \nabla u(x),e \rangle}{2} e^{-2 u(x-M e)+u(x)}.
\end{eqnarray*}
The mean value theorem leads to:
$$ \exists \xi \in [x-Me, x] \qquad 
u(x-M e) = u(x) - M \langle \nabla u(\xi),e\rangle \geq u(x) - M \langle \nabla u(x),e\rangle.
$$

Consequently, we obtain:
$$G_e(x) \leq \frac{\langle u(x),e\rangle}{2} e^{-u(x)+2 M \|\nabla u(x)\|}.
$$
The density $\phi \in \mathcal{LC}$ and we can find $K$ large enough such that:
$$
\forall \|x\| \geq K \quad \forall e \in \mathcal{S}^1 \qquad -u(x) + 2 M \|\nabla u(x)\| \leq -(1-\eta) u(x)
$$
For such an $x$, we have $G_e(x)   \leq \frac{\langle \nabla u(x),e \rangle }{2} e^{-(1-\eta) u(x)} \in \mathbb{L}^1(\RR^d)$.\\
Concerning $g_2(x)\phi(x)^{-1}$, we can produce an almost identical argument left to the reader.
 We now consider $g_{3,\varepsilon}^2 \phi^{-1}$:
$$
g_{3,\varepsilon}^2(x) \phi^{-1}(x) =  \underset{t\in B(x,\varepsilon)}{\sup} \|\nabla u(t)\|^2 e^{-2u(t)+u(x)}.
$$
If $t\in[x-\varepsilon, x]$, the mean value theorem leads to:
\begin{eqnarray*}
u(t) &=& u(x) - \langle (x-t),\nabla u(\xi)\rangle \textrm{ with } \xi\in]t,x[\\
&\geq & u(x) - \varepsilon \sup_{B(x,\epsilon)} \|\nabla u\|.
\end{eqnarray*}

Using the fact that $\|D^2 u\|+\|\nabla u\|=o_{\infty}(u)$, we can find a positive constant $C>0$, a parameter $\eta \in (0,1)$ and
for $K$ large enough such that $\forall  \|x\| \geq K$:
\begin{equation}
\label{eqg3-1}
\|u\|(t)^2 e^{-2u(t) + u(x)} \leq C \|u(x)\| e^{-(1-\eta) u(x)}.
\end{equation}
Thus, \eqref{eqg3-1} imply that $g_{3,\varepsilon}^2\phi^{-1}\in\LL^1(\RR^d)$.
As a maximum of three functions in $\mathbb{L}^1(\RR^d)$, we deduce that $g^2 \phi^{-1} \in \LL^1(\RR^d)$.

\underline{Proof of $(iii)$:}
A direct computation shows that, almost surely:
$$
\{d_{jj}\phi\}^2 \phi^{-1} = [ d_{jj}u-\{d_{j}u\}^2]^{2} e^{-u} \leq 2 \{d_{jj}u\}^{2} e^{-u} + 2 \{d_j u\}^4 e^{-u}.
$$ 
Again, using the fact that $\|D^2 u\|+\|\nabla u\|=o_{\infty}(u)$, we can find a positive constant $C>0$, a parameter $\eta \in (0,1)$ and
a  large enough $K$ such that $\forall \|x\| \geq K$:
\begin{eqnarray*}
\{d_{jj}u\}^{2}(x) e^{-u(x)} &\leq &C  d_{jj}u(x) e^{-(1-\eta) u(x)} \\
& \leq & C d_j(d_j u(x) e^{-(1-\eta) u(x)})+ C (1-\eta) \{d_{j}u(x)\}^2 e^{-(1-\eta) u(x)} \\
& \leq & C d_j(d_j u(x) e^{-(1-\eta) u(x)}) + C^2 (1-\eta) d_j u(x)e^{-(1-\eta)^2 u(x)},
\end{eqnarray*}
which is integrable when $\|x\| \longrightarrow + \infty$. A similar argument leads to $ d_ju^4 e^{-u} \leq C d_ju e^{-(1-\eta)u}$.
We can repeat the same argument when $\|x\| \longrightarrow - \infty$ with an adaptation of the sign of $d_j u(x)$. 
We can conclude that $\{d_{jj}\phi\}^2 \phi^{-1} \in \LL^1(\RR^d)$.
\end{proof}

\section{Refinement of a Cauchy-Schwarz inequality}
\label{s:CS}


In this section, without loss of generality, we normalize the density $\phi$ to $1$ over $\RR^d$, meaning (with a slight abuse of notation) that:
 $$
\forall \mu \in \RR^d \qquad \|\phi_{\mu}\|_2=1.
$$
In what follows, we assume that $\phi$ satisfies $\HS$ and $\HL$. In particular, these conditions imply the ``asymptotic decorrelation" of the location model.
\begin{prop}\label{prop:decorrelation}
Assume that $\phi$ satisfies $\HS$, then:
$$\lim_{\|a\|\longrightarrow + \infty} \langle \phi,\phi_{a}\rangle = 0.$$
\end{prop}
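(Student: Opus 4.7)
The plan is to reduce to the case of a compactly supported approximant and exploit the density of $C_c(\RR^d)$ in $\LL^2(\RR^d)$. Since $\HS$ ensures $\phi \in \LL^2(\RR^d)$, for any $\varepsilon > 0$ I can choose $\psi \in C_c(\RR^d)$ with $\mathrm{supp}(\psi) \subset B(0,R)$ for some $R=R(\varepsilon)>0$ and $\|\phi - \psi\|_2 < \varepsilon$. The strategy is to transfer the trivial ``eventual orthogonality'' of $\psi$ with its shifts to the function $\phi$ itself, paying only an $\varepsilon$-price.

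Concretely, I would write the telescoping decomposition
\begin{equation*}
\langle \phi, \phi_a\rangle = \langle \phi - \psi, \phi_a\rangle + \langle \psi, \phi_a - \psi_a\rangle + \langle \psi, \psi_a\rangle.
\end{equation*}
By Cauchy--Schwarz and the translation invariance of the $\LL^2$ norm, the first term is bounded by $\|\phi-\psi\|_2 \|\phi_a\|_2 \leq \varepsilon\|\phi\|_2$, and the second by $\|\psi\|_2 \|\phi_a - \psi_a\|_2 = \|\psi\|_2\|\phi-\psi\|_2 \leq \varepsilon\|\phi\|_2$ (using $\|\psi\|_2 \leq \|\phi\|_2 + \varepsilon$, which can be absorbed into the constant). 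For the third term, the crucial observation is that when $\|a\| > 2R$, the set $\mathrm{supp}(\psi_a) = a + \mathrm{supp}(\psi)$ is disjoint from $\mathrm{supp}(\psi)$, so $\langle \psi, \psi_a\rangle = 0$.

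Combining these estimates, $\limsup_{\|a\|\to\infty}|\langle \phi,\phi_a\rangle| \leq 2\varepsilon\|\phi\|_2 + O(\varepsilon^2)$, and letting $\varepsilon \to 0$ yields the claim. There is no real obstacle here; the result is essentially a reflection of the classical fact that the autocorrelation of an $\LL^2$ function vanishes at infinity (equivalently, by Plancherel, $\langle \phi,\phi_a\rangle$ is the Fourier transform of $|\hat\phi|^2 \in \LL^1$, and Riemann--Lebesgue would also conclude). The smoothness assumptions from $\HS$ (beyond $\LL^2$-integrability) are not needed for this particular statement; the argument relies only on $\phi \in \LL^2(\RR^d)$.
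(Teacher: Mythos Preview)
Your argument is correct. The route you take, however, differs from the paper's. The paper argues that under $\HS$ the density $\phi$ is bounded and satisfies $\phi(x)\to 0$ as $\|x\|\to\infty$; it then applies the Lebesgue dominated convergence theorem to $\int \phi(x-a)\phi(x)\,dx$ with the integrable majorant $\|\phi\|_\infty\,\phi$, using that $\phi(x-a)\to 0$ pointwise as $\|a\|\to\infty$. Your approach instead approximates $\phi$ in $\LL^2$ by a compactly supported $\psi$ and exploits the eventual disjointness of $\mathrm{supp}(\psi)$ and $\mathrm{supp}(\psi_a)$. The trade-off is that the paper's proof is slightly more direct once one accepts boundedness and decay of $\phi$ (facts that actually require a short justification beyond mere continuity), whereas your argument is cleaner in that it uses nothing but $\phi\in\LL^2(\RR^d)$ --- as you correctly observe, the $\mathcal{C}^3$ part of $\HS$ plays no role here. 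Your parenthetical Fourier/Riemann--Lebesgue remark is also a valid third route.
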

\begin{proof}
The continuity of $\phi$ implies that $\phi$ is bounded by a constant $K$ on $\RR^d$ and that:
$$
\lim_{\|x\|\longrightarrow + \infty} \phi(x) = 0,
$$
which in turns implies that: 
$$
\lim_{\|a\|\longrightarrow + \infty} \langle \phi,\phi_{a}\rangle = 
\lim_{\|a\|\longrightarrow + \infty} \int \phi(x-a) \phi(x) dx = 0,
$$
from the Lebesgue dominated convergence theorem.
\end{proof}

\subsection{Main inequality}
We are interested in Proposition \ref{prop:CS}, which can be viewed as a refinement of the Cauchy-Schwarz inequality. Its proof relies on somewhat technical lemmas that are given in Appendix~\ref{sec:teclemma}, and on the following ratio:
\begin{equation}\label{eq:defR}
R(a,b) :=  \frac{\left| \langle \phi - \phi_a,\phi_{a+b}-\phi_a \rangle \right|}{\left\|\phi-\phi_a\right\|_2  \left\|\phi_{a+b}-\phi_a\right\|_2}:= \frac{\left| N(a,b)\right|}{D(a,b)}.
\end{equation}
According to Lemma \ref{lemma:Rcontinu}, the function $(a,b) \mapsto R(a,b)$ defines a continuous map as soon as $a\neq 0$ and $b \neq 0$. 

As indicated above, Proposition \ref{prop:CS} is crucial for the proof of Theorems \ref{thm:vitesse_mu} and \ref{thm:vitesse_lambda}. At this stage, a standard Cauchy-Schwarz inequality would then conclude that $R(a,b) \leq 1$. Indeed, such an upper bound is not enough for our purpose and we need to improve it when $R$ becomes close to $1$. To obtain such an improvement, we will take advantage of the fact that each $\phi_a$ belongs to the unit sphere (\textit{i.e.} $\|\phi_a\|_2=1$ for all $a$), of the identifiability of the model, and of the asymptotic decorrelation  when the location is arbitrarily large: $\langle \phi,\phi_a\rangle \longrightarrow 0$ as $\|a\| \longrightarrow + \infty$.

The main ingredients of the proofs will then use some continuity and differentiability arguments associated with  multivariate second- and third-order expansions of the numerator $N(a,b)$ and denominator $D(a,b)$ involved in $R(a,b)$.
It appears that the next inequality will be shown to be ``easy" as soon as $a$ and $b$ are located outside the diagonal, meaning that $a+b$ is quite different from $0$ since in that case $R$ will be shown to be lift away from 1. This behaviour is described in Lemma  \ref{lemma:out} (see also Figure \ref{fig:roadmap}).

The situation when $a$ is close to $-b$ is more involved and the joint behaviour of $\phi-\phi_a$ and $\phi_a-\phi_{a+b}$ will be crucial. To quantify this link, we will need to consider two cases: first when the diagonal $a+b=0$ is itself near the origin $a=b=0$ (Lemma \ref{lemma:0diagonal}), second when the diagonal is far enough from the origin (Lemma \ref{lemma:diagonal}) (see Figure \ref{fig:roadmap}).

\begin{center}
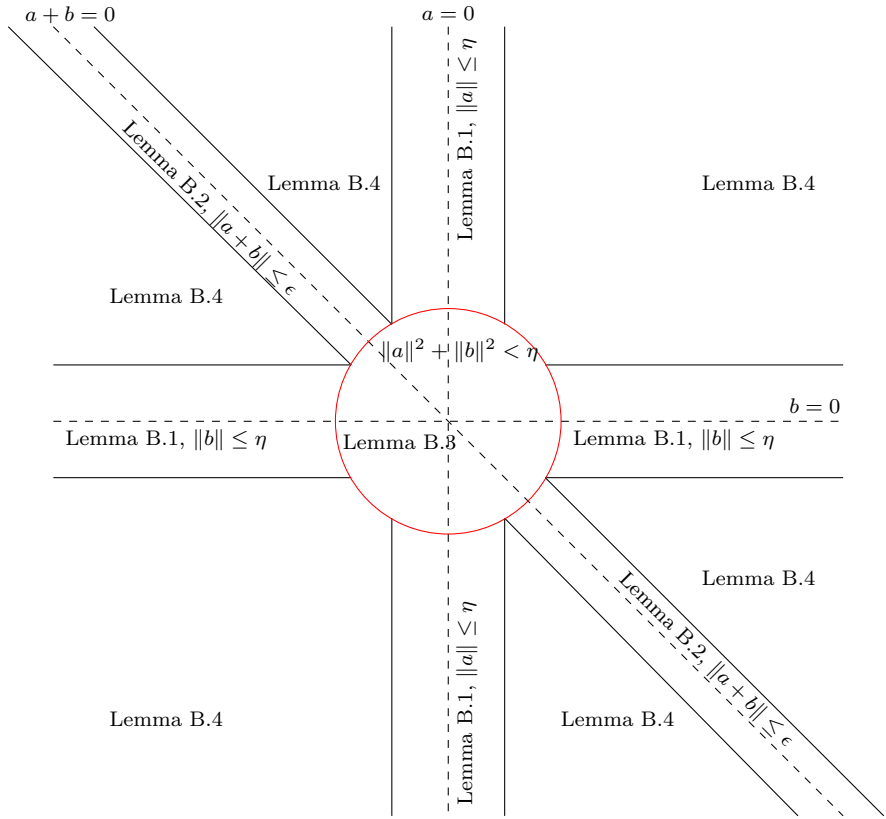
\begin{figure}[htbp]
\begin{tikzpicture}[scale=0.75]
\draw[dashed] (-7,0)--(7,0);
\draw[dashed] (0,7)--(0,-7);
\draw[dashed] (-7,7)--(7,-7);

\draw (-7.8,7)--(-1.72,1);
\draw (6.2,-7)--(1,-1.72);
\draw (-6.28,7)--(-1,1.72);
\draw (1.72,-1)--(7.72,-7);

\draw (-7,1)--(-1.72,1);
\draw (-7,-1)--(-1.72,-1);
\draw (7,-1)--(1.72,-1);
\draw (7,1)--(1.72,1);
\draw (-1,7)--(-1,1.72);
\draw (1,7)--(1,1.72);
\draw (-1,-7)--(-1,-1.72);
\draw (1,-7)--(1,-1.72);
\draw[red] (0,0) circle (2cm);
\draw (4.8,-4) node[below,rotate=-45]{Lemma B.2, $\|a+b\| \leq \epsilon$};
\draw (-4,4) node[below,rotate=-45]{Lemma B.2, $\|a+b\| \leq \epsilon$};

\draw (-6.7,7.5) node[below]{$a+b=0$};
\draw (0,7.5) node[below]{$a=0$};
\draw (6.5,0) node[above]{$b=0$};
\draw (-5,0) node[below]{Lemma B.1, $\|b\| \leq \eta$};
\draw (-5,-5) node[below]{Lemma B.4};
\draw (3,-5) node[below]{Lemma B.4};
\draw (-5,2.5) node[below]{Lemma B.4};
\draw (5.5,-2.5) node[below]{Lemma B.4};
\draw (5.5,4.5) node[below]{Lemma B.4};
\draw (-2.2,4.5) node[below]{Lemma B.4};
\draw (4,0) node[below]{Lemma B.1, $\|b\| \leq \eta$};
\draw (0,5) node[below,rotate=90]{Lemma B.1, $\|a\| \leq \eta$};
\draw (0,-5) node[below,rotate=90]{Lemma B.1, $\|a\| \leq \eta$};
\draw (-0.86,-0.1) node[below]{Lemma B.3};
\draw (0.2,1.6) node[below]{$\|a\|^2+\|b\|^2<\eta$};

\end{tikzpicture}
\caption{\label{fig:roadmap} Roadmap of the proof of Proposition \ref{prop:CS} with the associated partition of $\RR^d \times \RR^d$.}
\end{figure}
\end{center}

The main result is stated below and the demonstration follow the sketch of proof described above.
\begin{prop} 
\label{prop:CS}
If $\phi$ satisfies $\HS$ and $\HL$,  then a constant $c>0$ exists such that $\forall (a,b) \in \RR^d \times \RR^d$:
\begin{equation}\label{eq:cs}
\left| \langle \phi - \phi_a,\phi_{a+b}-\phi_a \rangle \right| \leq \left\|\phi-\phi_a\right\|_2  \left\|\phi_{a+b}-\phi_a\right\|_2 \left( 1 - c \left\|\phi-\phi_{a+b}\right\|_2^2\right).
\end{equation}
\end{prop}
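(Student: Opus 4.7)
\emph{Strategy.} The trivial Cauchy--Schwarz bound $R(a,b) \leq 1$ (with $R$ defined in \eqref{eq:defR}) must be sharpened to $R(a,b) \leq 1 - c\|\phi - \phi_{a+b}\|_2^2$. Since $\|\phi - \phi_{a+b}\|_2^2 \leq 4$ uniformly, it actually suffices to prove the stronger bound $R \leq 1 - c_0$ away from a neighborhood of the "diagonal" $\{a+b = 0\}$, and then to match the second-order rate $1 - R \asymp \|a+b\|^2 \asymp \|\phi - \phi_{a+b}\|_2^2$ on that diagonal. I would follow the partition of $\RR^d \times \RR^d$ drawn in Figure \ref{fig:roadmap}: a disk $\{\|a\|^2 + \|b\|^2 < \eta\}$ at the origin, strips around each of the three singular loci $\{a=0\}$, $\{b=0\}$, $\{a+b=0\}$, and corners away from everything, with a dedicated lemma per zone.

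\emph{Bulk, boundary strips, and infinity (Lemmas B.1 and B.4).} On any compact subset of $\{\|a+b\| \geq \epsilon,\ \|a\| \geq \eta,\ \|b\| \geq \eta\}$, the function $R$ is continuous by Lemma \ref{lemma:Rcontinu}, so a uniform gap $R \leq 1 - c_1$ reduces to pointwise non-collinearity of $\phi - \phi_a$ and $\phi_{a+b} - \phi_a$; linear dependence would force $\phi = \alpha \phi_a + \beta \phi_{a+b}$, contradicting the identifiability Proposition \ref{prop:id} whenever $a \neq 0$ and $a+b \neq 0$. The strips $\{\|a\| \leq \eta\}$ and $\{\|b\| \leq \eta\}$ (still with $\|a+b\| \geq \epsilon$) are handled by Taylor expanding $\phi - \phi_\mu = -\langle \mu, \nabla \phi\rangle + O(\|\mu\|^2)$ in the small variable — an expansion justified in $\LL^2$ by $\HS$ and $\HL$ — and checking that the limit of $R$ as that variable vanishes is still strictly below $1$ (Lemma B.1). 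The outer corners are settled via Proposition \ref{prop:decorrelation}, which drives all inner products to zero and gives $\limsup R \leq 1/\sqrt{2}$ as $\|a\|, \|b\|, \|a+b\|$ jointly tend to infinity (Lemma B.4).

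\emph{The diagonal regime $\|a+b\| \leq \epsilon$ (Lemmas B.2 and B.3).} Set $c = a + b$, $u = \phi - \phi_a$, $v = \phi - \phi_c$; then $\phi_{a+b} - \phi_a = u - v$, and a direct algebraic computation gives
\begin{equation*}
1 - R(a,b)^2 \;=\; \frac{r^2 (1 - \rho^2)}{1 - 2 r \rho + r^2},\qquad r := \frac{\|v\|_2}{\|u\|_2},\quad \rho := \frac{\langle u, v\rangle}{\|u\|_2 \|v\|_2}.
\end{equation*}
As $\|c\| \to 0$ with $\|a\|$ bounded away from zero, $r \asymp \|c\|$ by Proposition \ref{prop:cnorme}, and $v/\|v\|_2$ converges in $\LL^2$ to the normalized directional derivative in the direction $e = c/\|c\|$; consequently $\rho$ tends to the cosine between $\phi - \phi_a$ and $\langle e, \nabla \phi\rangle$. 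The refined inequality follows once one shows this limiting cosine is strictly bounded away from $\pm 1$, uniformly in $a$ and in $e \in \mathcal S^{d-1}$: otherwise $\phi - \phi_a$ would be proportional to a directional derivative of $\phi$, which is ruled out by an integration-by-parts argument (directional derivatives integrate to $0$, while $\phi - \phi_a$ does not in general) combined with identifiability. Combined with $\|\phi - \phi_{a+b}\|_2^2 = \|v\|_2^2 \asymp \|c\|^2$, this delivers $1 - R \gtrsim \|\phi - \phi_{a+b}\|_2^2$ (Lemma B.2). The remaining piece — both $\|a\|$ and $\|c\|$ small (the red disk) — needs a simultaneous second-order expansion in both parameters under $\HS$, reducing the problem to comparing two quadratic forms and extracting a strict gap (Lemma B.3).

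\emph{Main obstacle.} The real difficulty is producing the uniform cosine gap $|\rho| \leq 1 - c_2$ in the diagonal regime, uniform both as $\|a\| \to 0$ and as $\|a\| \to \infty$. The small-$a$ limit requires matching the expansions of $u$ and $v$ simultaneously in $a$ and $c$; the large-$a$ limit uses Proposition \ref{prop:decorrelation} together with $\HL$ to control $\|\phi - \phi_a\|_2^2$ and the relevant projections. A finite open cover of $\RR^d \times \RR^d$ by the pieces above then merges the zonal constants into a single admissible $c$.
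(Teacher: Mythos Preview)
Your strategy matches the paper's: the same partition of $\RR^d\times\RR^d$ (Figure~\ref{fig:roadmap}), a uniform gap $R\le 1-c_0$ off the diagonal via continuity and compactness (Lemma~\ref{lemma:out}) together with the limits computed in Lemma~\ref{lemma:Rcontinu}, and the quadratic rate on the diagonal (Lemmas~\ref{lemma:diagonal} and~\ref{lemma:0diagonal}). Your algebraic identity $1-R^2 = r^2(1-\rho^2)/(1-2r\rho+r^2)$ is a clean repackaging of the paper's direct Taylor expansion of $D(a,-a+h)-N(a,-a+h)$; both routes reduce the diagonal estimate to bounding the cosine between $\phi-\phi_a$ and $d_e[\phi]$ uniformly away from $\pm 1$, which is exactly the quantity $\psi(e,a)$ analysed in the paper's proof of Lemma~\ref{lemma:diagonal}.

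One concrete slip to repair: your stated reason that $\phi-\phi_a$ cannot be collinear with a directional derivative --- ``directional derivatives integrate to $0$, while $\phi-\phi_a$ does not in general'' --- is false, since $\int(\phi-\phi_a)=1-1=0$ as well. The paper's argument (Proposition~\ref{prop:phitec}(iv)) is different: if $d_e[\phi]=\lambda(\phi-\phi_a)$ with $\lambda\neq 0$, evaluate at a maximiser $x^\star$ of $\phi$ to get $\phi(x^\star)=\phi(x^\star-a)$, then iterate along the orbit $x^\star-ka$ to contradict $\phi\in\LL^2$. Similarly, your direct appeal to Proposition~\ref{prop:id} for the bulk non-collinearity is not quite on the nose (that proposition concerns contamination mixtures anchored at $\phi$, not at $\phi_a$), though a translation by $-a$ followed by the same Fourier computation does close that gap.
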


\begin{proof} The proof relies on a partition of $ \RR^d \times \RR^d$ that is detailed in Figure \ref{fig:roadmap}.

Note that when $a=0$ or $b=0$, Inequality \eqref{eq:cs} is trivial. We then consider the cases where $a \neq 0$ and $b \neq 0$.

Around the diagonal $a+b=0$, Lemmas \ref{lemma:diagonal} (far from the origin) and \ref{lemma:0diagonal} (near the origin) show that a couple $(\epsilon,c_{\epsilon})$ exists such that:
$$
\|a+b\| \leq \epsilon \Longrightarrow R(a,b) \leq 1 - c_{\epsilon} \|\phi_{a+b}-\phi\|_2^2.
$$
Therefore, Inequality \eqref{eq:cs} is true near the diagonal when $|a+b| \leq \epsilon$.

Now, outside the diagonal, Lemma \ref{lemma:out} shows that a constant for the value of $\epsilon>0$ found above, a constant $\tilde{c}_{\epsilon}$ exists such that:
$$
\|a+b\| \geq \epsilon \Longrightarrow R(a,b) \leq 1 - \tilde{c}_{\epsilon}.
$$
Since $\|\phi_{a+b}-\phi\|_2^2\leq 2$, it also implies that:
$$
\|a+b\| \geq \epsilon \Longrightarrow R(a,b) \leq 1 - \frac{\tilde{c}_{\epsilon}}{2} \|\phi_{a+b}-\phi\|_2^2.
$$
Then, Equation \eqref{eq:cs} holds outside the diagonal, it ends the proof.
\end{proof}

\subsection{Technical lemmas}\label{sec:teclemma}
\subsubsection{Properties of the location model $(\phi_a)_{a\in \RR^d }$}
In the following text, we will have to compute several Taylor's expansions that involve $(\phi_a)_{a \in \RR^d}$ and its successive derivatives. The $d$-dimensional Euclidean scalar product is denoted by:
$$
\forall (x,y) \in \RR^d \times \RR^d \qquad
x \ps y := \sum_{i=1}^d x_i y_i.
$$
This notation should be distinguished from the one of the scalar product among $\mathbb{L}^2$ functions:
$
\langle f,g\rangle = \int f(x) g(x) dx.
$
Finally, note that for any differentiable functions, the derivative of any function $f : \RR^d \longrightarrow \RR$ in any direction $e \in \mathbb{S}^1$  in any position $x \in \RR^d$ is
$$
d_{e}[f](x):=
\lim_{s \longrightarrow 0} \frac{f(x+s e) - f(x) }{s}.$$
Now, some standard arguments of geometry yield
$$
d_{e}[f](x)
 =  \nabla f(x)\ps e.
$$
We also introduce the successive derivation notation applied on  a twice differentiable function $f$:
$$
\forall (u,v) \in \mathbb{S}^1 \times \mathbb{S}^1 \quad
\forall x \in \RR^d  \qquad d_{u,v}[f]:= d_{u}[d_v[f]].
$$
Note that if $f$ is $\mathcal{C}^2(\RR^d)$, the Schwarz equality holds
$
d_{u,v}[f]=  d_{v,u}[f].
$

\begin{prop}\label{prop:phitec}
If the density $\phi$ satisfies $\HL$ and $\HS$, then for any unitary vectors $(u,v) \in \mathbb{S}^1 \times \mathbb{S}^1$:
\begin{itemize}
\item[$(i)$] $\langle \phi,u \bullet \nabla \phi \rangle =\langle \phi,d_u[\phi] \rangle =  0$.
\item[$(ii)$]$\langle d_u[\phi],d_{u,v}[\phi]\rangle = 0$.
\item[$(iii)$] $\langle d_u[\phi],d_{u,v,v}[\phi]\rangle = 
- \langle d_{u,u}[\phi],d_{v,v}[\phi]\rangle$
\item[$(iv)$] For any $a \in (\RR^d)^{\star}$ and $e \in \mathbb{S}^1$,  $\nabla \phi \ps e$ and $\phi-\phi_a$ are not proportional.
\end{itemize}
\end{prop}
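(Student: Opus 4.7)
The plan is to handle the four items in sequence. For $(i)$–$(ii)$, I would exploit the translation invariance of the $\mathbb{L}^2$-norm: since $a\mapsto\|\phi_a\|_2^2$ is constant in $a$, differentiating under the integral sign at $a=0$ in the direction $u$ yields $-2\langle\phi,d_u[\phi]\rangle=0$, which proves $(i)$. Applying the same argument to $\psi:=d_u[\phi]\in\mathbb{L}^2(\RR^d)$ in place of $\phi$ produces $\langle d_u[\phi],d_{u,v}[\phi]\rangle=0$ and proves $(ii)$. Differentiation under the integral sign is licit under $\HS$ and $\HL$.

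For $(iii)$, I would perform three integrations by parts. A single integration by parts in direction $v$ transforms $\langle d_u[\phi],d_{u,v,v}[\phi]\rangle$ into $-\|d_{u,v}[\phi]\|_2^2$; two further integrations by parts (one in direction $u$, one in direction $v$) rewrite $\|d_{u,v}[\phi]\|_2^2=\langle d_{u,u}[\phi],d_{v,v}[\phi]\rangle$, and combining both identities yields the claim.

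For $(iv)$, I would argue by contradiction. Suppose $\nabla\phi\ps e$ and $\phi-\phi_a$ are proportional with $a\in(\RR^d)^\star$ and $e\in\mathbb{S}^1$. Proposition \ref{prop:cnorme} ensures $\phi-\phi_a\not\equiv 0$, so the proportionality must take the form $\phi-\phi_a=c(\nabla\phi\ps e)$ with $c\neq 0$. Taking Fourier transforms gives $(1-e^{-\ii\xi\ps a})\hat\phi(\xi)=\ii c(\xi\ps e)\hat\phi(\xi)$; since $\hat\phi$ is continuous with $\hat\phi(0)=1$, this reduces to $1-e^{-\ii\xi\ps a}=\ii c(\xi\ps e)$ on an open neighborhood of $\xi=0$. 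Comparing real parts and Taylor-expanding around $\xi=0$ gives $\tfrac12(\xi\ps a)^2+O(\|\xi\|^4)=0$, which along a ray $\xi=t\zeta$ forces $\zeta\ps a=0$ for every $\zeta$, contradicting $a\neq 0$.

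The main obstacle will be the vanishing of the boundary terms in the repeated integration by parts of $(iii)$: under $\HS$ alone, $\phi\in\mathcal{C}^3\cap\mathbb{L}^2$ does not a priori imply pointwise decay of the derivatives at infinity. Combining $\HS$ with $\HL$ (and, for the log-concave densities of Remark \ref{rem:1}, with items $(ii)$–$(iii)$ of Proposition \ref{prop:lc}) provides the $\mathbb{L}^2$-integrability of the derivatives up to order three required for the boundary terms to vanish by a standard truncation–approximation argument.
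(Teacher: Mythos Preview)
Your proposal is correct. Items $(i)$--$(ii)$ coincide with the paper's argument: the paper integrates the total derivatives $d_u[\phi^2/2]$ and $d_v[(d_u[\phi])^2/2]$, which is the same computation as differentiating the constant $a\mapsto\|\phi_a\|_2^2$ (resp.\ $a\mapsto\|d_u[\phi]_a\|_2^2$). For $(iii)$ the paper notes that a single integration by parts in the direction $u$ suffices, since $d_{u,v,v}[\phi]=d_u\bigl[d_{v,v}[\phi]\bigr]$ gives $\langle d_u[\phi],d_u[d_{v,v}[\phi]]\rangle=-\langle d_{u,u}[\phi],d_{v,v}[\phi]\rangle$ directly; your detour through $-\|d_{u,v}[\phi]\|_2^2$ and two further integrations by parts reaches the same identity but is longer. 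Your remark about the boundary terms is a fair caveat that the paper leaves implicit as well.

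Item $(iv)$ is where the two proofs genuinely diverge. The paper argues pointwise: writing $d_e[\phi]=\lambda(\phi-\phi_a)$ with $\lambda\neq 0$, any maximizer $x^\star$ of $\phi$ satisfies $d_e[\phi](x^\star)=0$, hence $\phi(x^\star)=\phi(x^\star-a)$; iterating shows $\phi(x^\star-ka)=\phi(x^\star)$ for all $k$, contradicting $\phi\in\mathbb{L}^2$. The residual case $\lambda=0$ forces $d_e[\phi]\equiv 0$, contradicting identifiability. Your Fourier argument is equally valid and arguably cleaner: it avoids the iteration and the appeal to a global maximizer, at the price of invoking the Fourier transform of $\nabla\phi$. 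Both routes ultimately exploit that the functional equation $1-e^{-\ii\xi\ps a}=\ii c(\xi\ps e)$ (your version) or $\phi(x^\star)=\phi(x^\star-ka)$ (the paper's) is incompatible with $a\neq 0$.
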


\begin{proof}

Item $(i)$
If $\phi$ is $\mathcal{C}^1$, then the conclusion is immediate because
$$
d_e\left[\frac{\phi^2}{2}\right]= \phi\,  e \ps \nabla \phi.
$$
Since $e$ is a unit vector, we can find an orthonormal basis $(e_1=e,e_2,\ldots,e_d)$ and $(i)$ then comes from direct integration over $\RR^d$ of $d_e[\phi^2/2]$ because the Jacobian of the change of basis has value $1$.

%

Item $(ii)$ proceeds from the same kind of argument by considering
$$
d_v \left[ \frac{d_u[\phi]^2}{2}\right] = d_u[\phi] d_{u,v}[\phi],
$$
and using a change of coordinate with $v$.
%

Item $(iii)$: 
this identity is obtained using an integration by parts.

Item $(iv)$: we assume that:
\begin{equation}\label{eq:proportional}
\exists \lambda \in \RR \quad  \forall x \in \RR^d \qquad d_e[\phi](x)= \lambda [\phi(x) -\phi(x-a)]
\end{equation}
If $\lambda \neq 0$,  it implies that $d_e[\phi]$ is continuous everywhere (since $\phi_a$ and $\phi$ are continuous). Considering $x^*\in \arg\max \phi$, we use \eqref{eq:proportional} to obtain:
$$
\nabla \phi(x^*) = 0 \Longrightarrow d_e[\phi](x^*)=0 \Longrightarrow \phi(x^*)=\phi(x^*+a).
$$
In particular, we cannot have $\lim_{\|x\| \longrightarrow + \infty} \phi(x) = 0$, and $\phi \notin \mathbb{L}^2(\RR^d)$. We deduce that, necessarily, $d_e[\phi]=0$ everywhere, meaning that 
$$
\forall x \in \mathbb{R}^d \quad \forall s \in \mathbb{R} \qquad 
\phi(x+s e ) = \phi(x).
$$ This last equality  is  impossible because the location model is identifiable. 
\end{proof}

\subsubsection{Properties of the ratio $R$}

\begin{lemma}\label{lemma:Rcontinu} The function $R$ defined in \eqref{eq:defR} is a continuous function on $\{\RR^d\}^\star \times \{\RR^d\}^\star$ and is bounded from above by $1$. Moreover, we have:
$$
R(a,b)=1 \Longleftrightarrow a+b=0.
$$
Finally, we have
$$
\forall b \in \{\RR^d\}^\star \quad \forall e \in \mathbb{S}^1 \qquad 
\lim_{s \longrightarrow 0} R(s e,b) = \frac{|\langle d_e[\phi],\phi_b-\phi\rangle|}{\|d_e[\phi]\|_2 \|\phi-\phi_b\|_2}<1,
$$
and
$$ 
\forall (e,e') \in \mathbb{S}^1\times \mathbb{S}^1 \quad e \neq e' \Longrightarrow
\lim_{(s,s') \longrightarrow 0} R(s e,s' e') = \frac{|\langle d_e[\phi],d_{e'}[\phi]\rangle|}{\|d_e[\phi]\|_2 \|d_e'[\phi]\|_2}<1.
$$
\end{lemma}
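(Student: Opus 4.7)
I would establish the four statements in order. The continuity of $R$ on $(\RR^d)^\star \times (\RR^d)^\star$ follows from the fact that the translation map $a \mapsto \phi_a$ is continuous from $\RR^d$ into $\LL^2(\RR^d)$, a classical fact which under $\HL$ also follows from dominated convergence with the $\LL^2$-dominant $g$. Both $N(a,b) = \langle \phi - \phi_a, \phi_{a+b} - \phi_a\rangle$ and $D(a,b) = \|\phi - \phi_a\|_2 \|\phi_{a+b} - \phi_a\|_2$ are therefore jointly continuous in $(a,b)$, and $D(a,b) > 0$ on the given domain because $\|\phi - \phi_a\|_2 > 0$ whenever $a \neq 0$ and $\|\phi_{a+b} - \phi_a\|_2 = \|\phi_b - \phi\|_2 > 0$ whenever $b \neq 0$, by translation invariance of the $\LL^2$ norm and the identifiability result of Proposition \ref{prop:id}. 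The upper bound $R \leq 1$ is the ordinary Cauchy-Schwarz inequality.

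For the equality case, assume $R(a,b) = 1$ with $a, b \neq 0$. Equality in Cauchy-Schwarz forces $\phi - \phi_a = \kappa(\phi_{a+b} - \phi_a)$ for some $\kappa \in \RR$, that is, $\phi = (1-\kappa)\phi_a + \kappa\phi_{a+b}$. Translating by $a$ yields $\phi_{-a} = (1-\kappa)\phi + \kappa \phi_b$, and taking Fourier transforms on the open set $I$ on which $\hat\phi \neq 0$ (nonempty by the continuity and nontriviality of $\phi \in \LL^1(\RR^d)$) gives $e^{\ii \xi \ps a} = (1-\kappa) + \kappa e^{-\ii \xi \ps b}$ for every $\xi \in I$. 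As $\xi$ ranges over $I$, the left-hand side sweeps an open arc of the unit circle, while the right-hand side sweeps an open arc of the circle of radius $|\kappa|$ centered at $1-\kappa$; since two distinct circles in the plane meet in at most two points, these circles must coincide, which forces $\kappa = 1$. The relation then reduces to $e^{\ii \xi \ps (a+b)} = 1$ on $I$, whence $a + b = 0$. The converse is immediate since then $\phi_{a+b} - \phi_a = \phi - \phi_a$.

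Both limit claims rely on the $\LL^2$ first-order Taylor expansion $\phi - \phi_{se} = s\, d_e[\phi] + o(s)$ as $s \to 0$: under $\HS$ the pointwise limit $\frac{\phi(x) - \phi(x-se)}{s} \to \nabla\phi(x) \ps e$ is upgraded to $\LL^2$ convergence by dominated convergence with the majorant $g \in \LL^2$ supplied by $\HL$. For the first limit, expanding gives $N(se, b) = s \langle d_e[\phi], \phi_b - \phi\rangle + o(s)$ and $D(se, b) = |s|\, \|d_e[\phi]\|_2 \|\phi - \phi_b\|_2 + o(s)$ (the second factor of $D$ being exact by translation invariance), so the ratio converges to $|\langle d_e[\phi], \phi_b - \phi\rangle|/(\|d_e[\phi]\|_2 \|\phi - \phi_b\|_2)$; the strict inequality with $1$ comes from strict Cauchy-Schwarz, because $d_e[\phi]$ and $\phi - \phi_b$ are non-proportional for $b \neq 0$ by Proposition \ref{prop:phitec}$(iv)$. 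For the second limit, a double Taylor expansion together with the $\LL^2$-continuity of translation yields $\phi_{se + s'e'} - \phi_{se} = -s'\, d_{e'}[\phi] + o(|s|+|s'|)$ in $\LL^2$, so that $N(se, s'e') = -ss' \langle d_e[\phi], d_{e'}[\phi]\rangle + o(|ss'|)$ and $D(se, s'e') = |ss'|\, \|d_e[\phi]\|_2 \|d_{e'}[\phi]\|_2 + o(|ss'|)$; the stated limit follows. The main subtle point---which I view as the key obstacle---is verifying that this limit is strictly less than $1$: by strict Cauchy-Schwarz this requires $d_e[\phi]$ and $d_{e'}[\phi]$ to be non-proportional, which is equivalent to asking that $\nabla\phi \ps (e - \mu e') = 0$ has no solution $\mu \neq 0$, i.e.\ to the linear independence of the unit vectors $e$ and $e'$ (if $e - \mu e' \neq 0$, then $\phi$ would be invariant in a nontrivial direction, contradicting $\phi \in \LL^1(\RR^d)$).
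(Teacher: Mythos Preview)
Your proof is correct and follows the same strategy as the paper: continuity via dominated convergence with the $\HL$-majorant $g$, the $\LL^2$ first-order Taylor expansion $\phi-\phi_{se}=s\,d_e[\phi]+o(s)$ for the two limits, and Cauchy--Schwarz together with non-proportionality arguments for the strict inequalities. The one genuine addition in your write-up is the treatment of the equality case $R(a,b)=1\Leftrightarrow a+b=0$: the paper's own proof actually skips this claim entirely, whereas your Fourier argument---translating the proportionality $\phi-\phi_a=\kappa(\phi_{a+b}-\phi_a)$ into the identity $e^{\ii\xi\ps a}=(1-\kappa)+\kappa e^{-\ii\xi\ps b}$ on $\{\hat\phi\neq 0\}$, and then observing that an arc of the unit circle cannot lie on a distinct circle, which forces $\kappa=1$ and hence $a+b=0$---fills the gap cleanly. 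You also invoke Proposition~\ref{prop:phitec}$(iv)$ explicitly for the strictness of the first limit, which the paper leaves unstated. Finally, your remark that strictness of the second limit really requires $e$ and $e'$ to be \emph{linearly independent} (not merely $e\neq e'$) is correct and in fact exposes a small oversight shared with the paper's own argument: when $e'=-e$ one has $d_{e'}[\phi]=-d_e[\phi]$ and the limit equals $1$.
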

\begin{proof}
The continuity of $R$ when $a \neq 0$ and $b \neq 0$ is clear from the Lebesgue Theorem because $\HL$ implies that
$|\phi(x-a)-\phi(x)| \leq \|a\| g(x)$ with $g \in \mathbb{L}^2(\RR^d)$. We now consider the behaviour of $R$ when $a$ or $b$ are close to $0$.\\

$\bullet$
When $b \neq 0$ is fixed and $a \longrightarrow 0$, the assumption $\HL$ implies that
$|\phi(x-a)-\phi(x)| \leq \|a\| g(x)$ with $g \in \mathbb{L}^2(\RR^d)$. We can apply the Lebesgue Theorem and obtain, when $a =s e \longrightarrow 0$,
\begin{eqnarray*}
N(s e,b)&=&\int [\phi(x)-\phi_{s e}(x)][\phi_{s e+b}(x)-\phi_{s e}(x)] dx\\& \sim &s  \int d_e[\phi](x) [\phi_b(x)-\phi(x)] dx  \quad \mathrm{when} \ a\rightarrow 0.
\end{eqnarray*}
A similar computation shows that, when $a=s e\rightarrow 0$,
$$
D(s e,b) \sim s \sqrt{\int d_e[\phi]^2(x) dx} \sqrt{\int [\phi(x)-\phi_b(x)]^2 dx}.
$$
Hence, $R(s e , b)$ has a limit when $s \longrightarrow 0$ and $b \neq 0$ is fixed. For the sake of convenience, we keep the notation $R(0,b)$ to refer to this limit and the Cauchy-Schwarz inequality shows that:
$$
R_e(0,b) := \lim_{s \longrightarrow 0} R(s e ,b)  = \frac{|\langle e \ps \nabla \phi,\phi_b-\phi\rangle|}{\|e \ps \nabla \phi\|_2 \|\phi_b-\phi\|_2} \leq 1.
$$
For symmetry reasons in $a$ and $b$, the same results hold for $a \longmapsto R_e(a,0)$.\\

$\bullet$ The situation may be dealt with similarly near  $(0,0)$, the  Lebesgue Theorem yields:
$$
\frac{|N(s e,s' e')|}{D(s e,s' e')} \underset{(s,s') \longrightarrow (0,0)}{=}  \frac{|\langle d_e[\phi],d_{e'}[\phi]\rangle|}{\|d_e[\phi]\|_2 \|d_{e'}[\phi]\|_2}.
$$
If  $d_e[\phi]$ and $d_{e'}[\phi]$ were proportional, then $\lambda$ exists such that $d_e[\phi]=\lambda d_{e'}[\phi]$ everywhere, meaning that for all $x$ in $\RR^d$, the function $s \longmapsto \phi(x+s(e- \lambda e'))$ is constant, which is impossible because considering the variation of $\phi$ on the line $x^\star + s (e- \lambda e')$ where $x^\star =  \arg \max \phi$. 
Therefore, the limit is also strictly lower than $1$.
\end{proof}

The next lemma concerns the behavior of $R$ around the diagonal $a+b=0$ when $a$ or $b$ are not close to $0$.
\begin{lemma}\label{lemma:diagonal}
For any $\eta>0$, we can find $\epsilon>0$ such that: 
$$
\forall \|a\| \geq \eta \quad \forall \|h\| \leq \epsilon \qquad 
R(a,-a+h) \leq 1 - c_{\eta} \|\phi_h-\phi\|_2^2.
$$
\end{lemma}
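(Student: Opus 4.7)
The plan is to exploit a Gram-determinant refinement of the Cauchy--Schwarz inequality. Setting $u := \phi - \phi_a$, $w := \phi_h - \phi$, and $v := \phi_h - \phi_a = u + w$, a short computation yields
\[
1 - R(a,-a+h)^2 \;=\; \frac{\|u\|_2^2\,\|w\|_2^2 - \langle u, w\rangle^2}{\|u\|_2^2\,\|v\|_2^2} \;=\; \frac{\|w\|_2^2\,\sin^2\theta(a,h)}{\|v\|_2^2},
\]
where $\theta(a,h)$ denotes the $\mathbb{L}^2$-angle between $u$ and $w$. Since $\|\phi_\mu\|_2 = 1$ yields $\|v\|_2 \leq 2$ and since $1-R \geq (1-R^2)/2$, the lemma reduces to exhibiting constants $c_\eta,\epsilon > 0$ such that $\sin^2\theta(a,h) \geq c_\eta$ whenever $\|a\| \geq \eta$ and $\|h\| \leq \epsilon$; substituting in the displayed identity will then give $1-R \geq (c_\eta/8)\,\|\phi_h - \phi\|_2^2$, which is the desired bound.

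The second step is a limiting analysis as $h \to 0$. Writing $e := h/\|h\|$, the elementary identity
\[
\phi_h(x) - \phi(x) \;=\; -\|h\|\int_0^1 d_e[\phi](x - s h)\,ds,
\]
together with Assumption $\HL$ and dominated convergence, shows that $w/\|h\| \longrightarrow -d_e[\phi]$ in $\mathbb{L}^2$, and this convergence is uniform in $a$ because $\|u\|_2 \leq 2$. Consequently $\sin^2\theta(a,h)$ converges, uniformly in $a$, to
\[
\sin^2\Theta(a,e) \;:=\; 1 \;-\; \frac{\langle \phi - \phi_a,\, d_e[\phi]\rangle^2}{\|\phi - \phi_a\|_2^2\,\|d_e[\phi]\|_2^2},
\]
which is pointwise strictly positive on $\{\|a\| \geq \eta\}\times\mathbb{S}^1$ by Proposition~\ref{prop:phitec}(iv).

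The hard part is to promote this pointwise positivity to a \emph{uniform} lower bound on the non-compact strip $\{\|a\|\geq\eta\}\times \mathbb{S}^1$. I would split into two regimes. On a compact annulus $\{\eta \leq \|a\| \leq A\}\times\mathbb{S}^1$, continuity of $\sin^2\Theta$ and compactness deliver a positive infimum, using Propositions~\ref{prop:cnorme} and~\ref{prop:id} to bound $\|\phi-\phi_a\|_2$ away from zero and compactness of $\mathbb{S}^1$ to do the same for $\|d_e[\phi]\|_2$. For $\|a\| \geq A$ large, Proposition~\ref{prop:decorrelation} gives $\|\phi-\phi_a\|_2^2 \to 2$, while Proposition~\ref{prop:phitec}(i) and a Riemann--Lebesgue argument applied to the convolution of the two $\mathbb{L}^2$ functions $d_e[\phi]$ and $\phi$ (whose Fourier transforms multiply to an $\mathbb{L}^1$ function by $\HS$) yield $\langle \phi - \phi_a,\, d_e[\phi]\rangle \to 0$ as $\|a\| \to \infty$; uniformity in $e \in \mathbb{S}^1$ comes from covering the sphere by a finite $\varepsilon$-net adapted to the continuous map $e \mapsto d_e[\phi] \in \mathbb{L}^2$. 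Hence $\sin^2\Theta(a,e) \to 1$ uniformly in $e$, producing a uniform positive minimum $2 c_\eta$ on the whole strip. Combining this with the uniform $h \to 0$ control from the preceding step then gives $\sin^2\theta(a,h) \geq c_\eta$ for $\|h\| \leq \epsilon$ small enough, and the first display closes the argument.
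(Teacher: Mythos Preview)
Your proof is correct and follows the same core strategy as the paper: reduce to a uniform positive lower bound for the Gram--type quantity
\[
\psi(e,a)\;=\;\|d_e[\phi]\|_2^2\,\|\phi-\phi_a\|_2^2 - \langle d_e[\phi],\,\phi-\phi_a\rangle^2
\]
over $\{\|a\|\geq\eta\}\times\mathbb{S}^1$, handled by a compact/non-compact split in $a$.

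The technical entry point is different, and yours is cleaner. The paper reaches $\psi(e,a)$ by second-order Taylor expanding both $N(a,-a+h)$ and $D(a,-a+h)$ in $h$ and computing $D-N$ to order $\|h\|^2$; this produces $o(\|h\|^2)$ remainders whose uniformity in $a\in B(0,\eta)^c$ must be asserted. Your exact Gram identity $1-R^2=\|w\|_2^2\sin^2\theta/\|v\|_2^2$ sidesteps the expansion entirely; the only limiting step is $\sin^2\theta(a,h)\to\sin^2\Theta(a,e)$, which reduces to the elementary fact that translations act continuously on $\mathbb{L}^2$, plus the bound $\|u\|_2\leq 2$. Conversely, the paper's treatment of the regime $\|a\|\to\infty$ is simpler than yours: they just use $\langle d_e[\phi],\phi_a\rangle^2\leq\|d_e[\phi]\|_2^2$ together with $\|\phi-\phi_a\|_2^2\geq 3/2$, which immediately gives $\psi(e,a)\geq\|d_e[\phi]\|_2^2/2$ without any Riemann--Lebesgue or $C_0$-convolution argument. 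Your route there is correct but heavier than necessary; note also that the $C_0$ property of the convolution of two $\mathbb{L}^2$ functions follows directly by approximation with compactly supported functions and does not require the Fourier justification you sketch.
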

\begin{proof}

To establish the desired inequality, remark that: 	
\begin{eqnarray}
\lefteqn{R(a,-a+h) \leq 1 - c \|\phi_h-\phi\|_2^2}\nonumber\\
 &\Longleftrightarrow & N(a,-a+h) \leq D(a,-a+h) - c  \|\phi_h-\phi\|_2^2 D(a,-a+h)\nonumber \\
&\Longleftrightarrow &D(a,-a+h)-N(a,-a+h) > c  \|\phi_h-\phi\|_2^2 D(a,-a+h).\label{eq:cns}
\end{eqnarray}

\underline{\textit{Point 1): Taylor expansion of $N$ and $D$.}}

We use a Taylor expansion when $h=o(1)$ and compute:
\begin{eqnarray*}
N(a,b) &= & N(a,-a+h) = \langle \phi-\phi_a,\phi_h-\phi_a \rangle\\
& = & \|\phi-\phi_a\|_2^2 -  \langle h \ps \nabla \phi,\phi-\phi_a \rangle  + \frac{1}{2} ^t h \frac{\langle D^2 \phi,\phi-\phi_a \rangle}{2} h + o(\|h\|^2).\\
\end{eqnarray*}
where the $o(\|h\|^2)$ is uniform in $a \in B(0,\eta)^c$.
In the meantime, we have:
\begin{eqnarray*}
\lefteqn{D(a,b) =  D(a,-a+h)}\\
&  = & \|\phi-\phi_a\|_2 \sqrt{\|\phi-\phi_a\|_2^2 - 2   \langle h \ps \nabla \phi,\phi-\phi_a \rangle +  \|h \ps \nabla \phi\|_2^2 + ^t h \langle D^2 \phi,\phi-\phi_a\rangle h + o(\|h\|^2)}\\
&=& \|\phi-\phi_a\|_2^2 \sqrt{1- \frac{ 2 \langle h \ps \phi,\phi-\phi_a \rangle}{\|\phi-\phi_a\|_2^2}+\frac{\|h \bullet \nabla \phi\|_2^2+^t h \langle D^2 \phi,\phi-\phi_a\rangle h}{\|\phi-\phi_a\|_2^2}  + o(\|h\|^2)} \\
& = &  \|\phi-\phi_a\|_2^2-    \langle h \ps \nabla \phi,\phi-\phi_a \rangle \\
&& + \left( \frac{\|h \ps \nabla \phi\|_2^2}{2} + \frac{^t h\langle D^2 \phi,\phi-\phi_a\rangle h}{2} - \frac{ \langle h \ps \nabla \phi,\phi-\phi_a \rangle^2}{2 \|\phi-\phi_a\|_2^2}\right)  +o(\|h\|^2),
\end{eqnarray*}
where the $o(\|h\|^2)$ is uniform in $a \in B(0,\eta)^c$.
Consequently, we obtain:
\begin{eqnarray}\label{eq:D-N}
\lefteqn{D(a,-a+h)-N(a,-a+h)}\nonumber\\
&=&\frac{1}{2 \|\phi-\phi_a\|_2^2}  \left[ \|h \ps \nabla \phi\|_2^2 \|\phi-\phi_a\|_2^2 - \langle h \ps \nabla  \phi,\phi-\phi_a\rangle^2 \right]  +o(\|h\|^2).
\end{eqnarray}
The main term of the right hand side is obviously non negative from the Cauchy-Schwarz inequality. But it requires a deeper inspection to establish Inequality \eqref{eq:cns}.
We introduce the following parametrization: $h = \|h\| e$ where $e \in \mathbb{S}^1$. Equation \eqref{eq:D-N} yields
$$
D(a,-a+h)-N(a,-a+h) = \frac{|h|^2}{2 \|\phi-\phi_a\|_2^2}
\psi(e,a)+ o(\|h\|^2),
$$
where the $o(\|h\|^2)$ is uniform in $a \in B(0,\eta)^c$ with $\psi$ given by
$$\psi(e,a)=\|e \ps \nabla \phi\|_2^2 \|\phi-\phi_a\|_2^2 - \langle e \ps \nabla  \phi,\phi-\phi_a\rangle^2.$$
 We shall prove that
$$
\min_{e \in \mathbb{S}^1} \min_{ a \in B(0,\eta)^c} \psi(e,a)>0.
$$

\underline{\textit{Point 2): $\psi$ is uniformly lower bounded.}}

We remark first that $\psi$ is continuous over $\mathbb{S}^1 \times B(0,\eta)^{c}$ and for any vector $e \in \mathbb{S}^1$ and any $a \in B(0,\eta)^{c}$, we know that $\psi(e,a)>0$ since we have seen in the proof of Lemma \ref{lemma:Rcontinu} that $d_e[\phi]$ and $\phi-\phi_a$ cannot be proportional each other.

We study the behaviour of $\psi$ when $\|a\| \longrightarrow + \infty$ \textit{uniformly in $e \in \mathbb{S}^1$}. A straightforward application of Proposition \ref{prop:decorrelation} shows that
$$
\|\phi-\phi_a\|_2^2  = 2 - 2 \langle \phi,\phi_a\rangle \longrightarrow 2 \quad \text{as} \quad \|a\| \longrightarrow + \infty.
$$
Hence, a large enough $A$ exists such that $\|a\| \geq A \Longrightarrow \|\phi-\phi_a\|_2^2  \geq 3/2$.
In the meantime, we have
$$
[\langle e \ps \nabla \phi,\phi-\phi_a \rangle]^2 = 
[\underbrace{ \langle e \ps \nabla \phi,\phi \rangle}_{:=0 \, \text{from Proposition \ref{prop:phitec}}} -\langle e \ps \nabla \phi_a \rangle]^2 = \langle e \ps \nabla \phi, \phi_a \rangle^2 \leq \|e\ps \nabla \phi \|_2^2.
$$
Therefore, we deduce that
\begin{equation}\label{eq:mino_psi}
\|a\| \geq A \Longrightarrow \forall e \in \mathbb{S}^1 \quad \psi(e,a) \geq \frac{\|e \ps \nabla \phi\|_2^2}{2}.
\end{equation}
Now, $e\in \mathbb{S}^1 \longmapsto \|e \ps \nabla \phi\|_2^2$ is a continuous map that does not vanish on $\mathbb{S}^1$,  otherwise $\phi$ would be constant on each line parallel to a direction of $\mathbb{S}^1$, and in particular would be constant on a line passing through $x^\star$. The compactness of $\mathbb{S}^1$ implies that 
$$
m := \inf_{e \in \mathbb{S}^1}  \|e \ps \nabla \phi\|_2^2  >0.
$$
This last bound used in Equation \eqref{eq:mino_psi} yields
$
\|a\| \geq A \Rightarrow \inf_{ e \in \mathbb{S}^1}  \psi(e,a) \geq \frac{m}{2}.
$
Consequently, $\psi$ is uniformly lower bounded by $\tilde{m}_{\eta}>0$ over $\mathbb{S}^1 \times B(0,\eta)^c$.

\underline{\textit{Point 3): Final inequality}}
We can gather the conclusions of Point 1) and Point 2) and obtain that for any $\eta>0$, a small enough $\epsilon$ exists such that 
$$
\forall a \in B(0,\eta)^c \quad \forall \|h\| \leq \epsilon\qquad 
D(a,-a+h)-N(a,-a+h) \geq \frac{\|h\|^2}{4 \|\phi-\phi_a\|_2^2} \tilde{m}_{\eta}.
$$
%
%
%
%
Since $\|\phi-\phi_a\|_2^2$ and $D$ are upper bounded by $2$, we deduce that:
$$
\forall (a,h) \in B(0,\eta)^c \times B(0,\epsilon)
 \qquad 
(D-N)(a,-a+h)  \geq \frac{m_{\eta}}{16} D(a,-a+h)  \|h\|^2.
$$
This inequality associated with 
$$\|\phi_h-\phi\|_2^2 = \| h \ps  \nabla \phi\|_2^2 + o(\|h\|^2) \leq \|h\|^2 \sup_{e \in \mathbb{S}^1} \|e \ps \nabla \phi\|_2^2 + o(\|h\|^2)$$ leads to
 the desired inequality \eqref{eq:cns} with $c=\frac{\tilde{m}_{\eta}}{32 \sup_{e \in \mathbb{S}^1} \|e \ps \nabla \phi\|_2^2}$.
\end{proof}

The next lemma concerns the behavior of $R$ around the origin $(0,0)$.
\begin{lemma}\label{lemma:0diagonal}
Two constants $(\eta,c_{\eta}) \in \RR_+^2$  exist such that:
$$
\|a\| \vee \|b\| \leq \eta \Longrightarrow R(a,b) \leq 1 - c_{\eta} \|\phi_{a+b}-\phi\|_2^2
$$
\end{lemma}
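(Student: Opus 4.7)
The plan is to Taylor-expand the two sides of the inequality near the origin and use the orthogonality identities of Proposition~\ref{prop:phitec} to extract the required quadratic lower bound.

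Setting $A := \|\phi-\phi_a\|_2^2$, $B^* := \|\phi-\phi_{a+b}\|_2^2$ and $C^* := \langle \phi-\phi_a,\,\phi-\phi_{a+b}\rangle$, the identity $\phi_{a+b}-\phi_a = (\phi-\phi_a)-(\phi-\phi_{a+b})$ yields $N(a,b) = A - C^*$ and $\|\phi_{a+b}-\phi_a\|_2^2 = A - 2C^* + B^*$. Hence
\begin{equation*}
1 - R(a,b)^2 \;=\; \frac{AB^* - (C^*)^2}{A\,(A-2C^*+B^*)} .
\end{equation*}
Using $1-R \geq (1-R^2)/2$, the conclusion reduces to the quantitative Cauchy--Schwarz inequality
\begin{equation*}
AB^* - (C^*)^2 \;\geq\; 2 c_\eta\, A\,(A-2C^*+B^*)\,B^* \qquad \mbox{for all } \|a\|, \|b\| \leq \eta.
\end{equation*}

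Taylor-expanding $\phi(x-a)$ and $\phi(x-a-b)$ to order three in $a$ (resp.\ $a+b$) and substituting, I compute $A$, $B^*$, $C^*$ up to order four. The identities $\langle d_u[\phi], d_{u,v}[\phi]\rangle = 0$ and $\langle d_u[\phi], d_{u,v,v}[\phi]\rangle = -\langle d_{u,u}[\phi], d_{v,v}[\phi]\rangle$ from Proposition~\ref{prop:phitec}(ii)--(iii) kill the otherwise troublesome odd cross-terms, so that at leading order
\begin{equation*}
AB^* - (C^*)^2 \;=\; \|a\ps\nabla\phi\|_2^2\,\|(a+b)\ps\nabla\phi\|_2^2 - \langle a\ps\nabla\phi,\,(a+b)\ps\nabla\phi\rangle^2 + \mbox{higher order},
\end{equation*}
a non-negative Gram determinant in the positive-definite inner product associated with $K_{ij}:=\langle\partial_i\phi,\partial_j\phi\rangle$. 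Outside a conical neighborhood of the collinear locus $\{a \parallel a+b\}$ this leading term is uniformly bounded below (its ratio with $A\,(A-2C^*+B^*)\,B^*$ stays away from zero), which gives the desired inequality in that regime.

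The main obstacle is the collinear regime, where the Gram determinant vanishes and one must extract the next order. Parameterizing $a = se$ and $a+b = ve$ along a common direction $e\in\mathbb{S}^1$ (so $b = (v-s)e$), the one-dimensional specializations $\langle d_e[\phi], d_{e,e}[\phi]\rangle = 0$ and $\langle d_e[\phi], d_{e,e,e}[\phi]\rangle = -\|d_{e,e}[\phi]\|_2^2$ permit a clean expansion:
\begin{equation*}
AB^* - (C^*)^2 \;=\; \tfrac{1}{4}\, s^2\, v^2\, (v-s)^2\, \|d_e[\phi]\|_2^2\, \|d_{e,e}[\phi]\|_2^2 \;+\; O\bigl((|s|+|v|)^7\bigr).
\end{equation*}
The decisive factor $(v-s)^2 = \|b\|^2$ is exactly what absorbs the right-hand side, since $A\,(A-2C^*+B^*)\,B^* \sim s^2 (v-s)^2 v^2\,\|d_e[\phi]\|_2^6$; the admissible constant is of order $\|d_{e,e}[\phi]\|_2^2/\|d_e[\phi]\|_2^4$, which is uniformly positive over $e\in\mathbb{S}^1$ by compactness since $\phi\in\mathbb{L}^2$ cannot be affine on any line. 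A continuity/compactness argument then interpolates between the non-collinear and collinear regimes and yields a single constant $c_\eta > 0$ after choosing $\eta$ small enough to absorb the higher-order remainders.
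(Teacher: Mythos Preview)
Your proposal is correct and follows essentially the same strategy as the paper: Taylor-expand near the origin, invoke the orthogonality identities of Proposition~\ref{prop:phitec} to kill the odd cross-terms, split into a non-collinear regime (where the leading Gram term already suffices) and a collinear regime (where the next-order term $\|d_{e,e}[\phi]\|_2^2$ must be extracted), and close with a compactness argument over $e\in\mathbb{S}^1$. Your admissible constant in the collinear case, of order $\|d_{e,e}[\phi]\|_2^2/\|d_e[\phi]\|_2^4$, matches exactly the one the paper obtains.

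The only real difference is algebraic packaging. The paper expands $N(a,b)$ and $D(a,b)$ separately (parameterizing $a=re$, $b=\tilde r\tilde e$) and works with the difference $D-|N|$ directly. You instead rewrite everything in terms of $A=\|\phi-\phi_a\|_2^2$, $B^*=\|\phi-\phi_{a+b}\|_2^2$, $C^*=\langle\phi-\phi_a,\phi-\phi_{a+b}\rangle$, derive the clean identity $1-R^2=(AB^*-(C^*)^2)/\bigl(A(A-2C^*+B^*)\bigr)$, and recognize the numerator as a Gram determinant in the inner product $K_{ij}=\langle\partial_i\phi,\partial_j\phi\rangle$. This buys you a more transparent geometric interpretation of the non-collinear case and an arguably tidier collinear computation (your parameterization $a=se$, $a+b=ve$ makes the factor $(v-s)^2=\|b\|^2$ appear very naturally). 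Conversely, the paper's direct $D-N$ expansion avoids the intermediate passage through $1-R^2$ and the inequality $1-R\geq(1-R^2)/2$. Both routes arrive at the same place with the same constants; neither handles the interpolation between ``nearly collinear'' and ``exactly collinear'' in full detail, and both defer this to a continuity/compactness remark.
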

\begin{proof}
To  study $R$ around the origin, we write $a=r e$ and $b=\tr e$ with $(e,\te)\in \mathbb{S}^1 \times \mathbb{S}^1$ and remark that a third order Taylor expansion yields (below we skip the dependency in $\phi$ for the sake of convience and just write $d_e$ instead of $d_e[\phi]$):
\begin{align*}
\phi_a-\phi_{a+b} &=\tr d_{\te}-r \tr d_{e \te}- \frac{\tr^2}{2} d_{\te \te}
+\frac{r \tr^2}{2}d_{e\te\te}+\frac{\tr r^2}{2}d_{\te e e}+\frac{\tr^3}{6} d_{\te\te\te} + o((r\vee \tr) ^3),
\end{align*}
while 
$$
\phi-\phi_a = r d_{e}  - \frac{r^2}{2} d_{ee} +\frac{r^3}{6}d_{eee} +o(r^3).
$$
We can use these third order expansions in $N(a,b)$:
\begin{align*}
N(a,b)  &= r \tr \langle d_e ,d_{\te} \rangle - 
 \frac{r \tr^2}{2} \langle d_e ,d_{\te\te} \rangle - \frac{\tr r^2}{2} \langle d_{ee},d_{\te }\rangle  + \frac{r^2 \tr^2}{2}\langle d_{ee},d_{\te }\rangle + \frac{r^2 \tr^2}{2} \langle d_e,d_{e\te\te}\rangle\\
 & +\frac{\tr r^3}{2} \langle d_e,d_{\te e e}\rangle+ \frac{r \tr^3}{6} \langle d_e,d_{\te\te\te}\rangle + \frac{\tr r^3}{2}\langle d_{ee},d_{\te\te}\rangle+\frac{r^2 \tr^2}{4} \langle d_{ee},d_{\te\te}\rangle+\frac{\tr r^3}{6} \langle d_{\te},d_{eee}\rangle \\ &+ o((r\vee \tr) ^3)
\end{align*}
Now, using Proposition \ref{prop:phitec} $iii)$, we obtain  that
\begin{align*}
N(a,b) &= r \tr \left[ \langle d_e,d_{\te}\rangle - \frac{\tr}{2} \langle d_e,d_{\te\te}\rangle - \frac{r}{2} \langle d_{\te},d_{ee}\rangle + \frac{r \tr}{4} \langle d_e,d_{e\te\te}\rangle + \frac{\tr^2}{6} \langle d_e,d_{\te\te\te}\rangle + \frac{r^2}{6} \langle d_{\te},d_{eee}\rangle\right]\\ &+ o((r\vee \tr) ^3).
\end{align*}
Similar computations on $D(a,b)$ with $a=r e$ and $b=\tr \te$ yield:
$$
D(a,b) = r \tr \left[ \|d_e\|_2 \|d_{\te}\|_2 - \frac{1}{24} \left(
 \tr^2 \|d_{\te \te}\|_2^2 \frac{\|d_e\|_2}{\|d_{\te}\|_2}
 +r^2 \|d_{e e}\|_2^2 \frac{\|d_{\te}\|_2}{\|d_{e}\|_2}
\right)\right]+ o((r\vee \tr) ^3)
$$

We then consider the two possible situations: either $e=\te$ or $e \neq \te$.

\underline{Case $e = \te$:} in that situation, the expression of $N$ is simpler because of  Proposition \ref{prop:phitec} $ii)$ and we have
$$
N(a,b) = r \tr \left[ \|d_e\|_2^2- \frac{r \tr}{4} \|d_{ee}\|_2^2 - \frac{r^2+\tr^2}{6} \|d_{ee}\|_2^2\right]+ o((r\vee \tr) ^3)
$$
In that case, we then obtain
\begin{eqnarray*}
D(a,b)-|N(a,b)|& \geq&  r \tr \|d_{ee}\|_2^2 \left[ - \frac{r^2+\tr^2}{24} + \frac{r^2+\tr^2}{6} + \frac{r \tr}{4} \right] + o(r^2+\tr^2) \\
& = &\frac{3\|d_{ee}\|_2^2 }{24} r \tr  (r+\tr)^2o((r\vee \tr) ^3).
\end{eqnarray*}
Using the argument in Equation \eqref{eq:cns} again, we can check that:
$$
c \|\phi_{a+b}-\phi\|_2^2 D(a,b) \sim  c \underbrace{\|d_e\|_2^2 (r+\tr)^2}_{\|\phi_{a+b}-\phi\|_2^2} \times \underbrace{r \tr \|d_e\|_2^2}_{ D(a,b) } =c r \tr  (r+\tr)^2 \|d_e\|_2^4,
$$
which means that if $c< \min_{e \in \mathbb{S}^1}\frac{3\|d_{ee}[\phi]\|_2^2}{24 \|d_e[\phi]\|_2^4}$, then \eqref{eq:cns} holds for small enough $r$ and $\tr$. This is possible since for any vector $e$ in $\mathbb{S}^1$, $d_{ee}[\phi]$ does not vanish (otherwise $\phi$ would not be a density) and is a continuous function of $e$ on a compact space.

\underline{Case $e \neq \te$:} The situation is less intricate in that situation because the first order terms are not of the same size
$$
D(a,b)-N(a,b)=r \tr \left[ \|d_e\|_2 \|d_{\te}\|_2 - \langle d_e , d_{\te}\rangle \right] + o(r \vee \tr).
$$
Applying the Cauchy-Schwarz inequality, we check that
$\|d_e\|_2 \|d_{\te}\|_2 - \langle d_e , d_{\te}\rangle >0$ since $d_e$ and $d_{\te}$ are not proportional.
\end{proof}

The remaining lemma studies the behavior of $R$ outside the diagonal.
\begin{lemma}\label{lemma:out}
For any $\epsilon>0$, a constant $c_{\epsilon}$ exists such that: 
$$
\|a+b\|\geq \epsilon \Longrightarrow R(a,b) \leq 1- c_{\epsilon}.
$$
\end{lemma}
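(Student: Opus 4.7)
The plan is to argue by contradiction using sequential compactness on the sphere plus the (one-point) compactification $\RR^d \cup \{\infty\}$. Suppose the conclusion fails: then some $\epsilon>0$ admits a sequence $(a_n,b_n) \in (\RR^d)^\star \times (\RR^d)^\star$ with $\|a_n+b_n\|\geq \epsilon$ and $R(a_n,b_n)\to 1$. The first preparatory step will be to rewrite $R$ in a form that makes the limits transparent. Since $\|\phi_c\|_2 = 1$ and $\langle \phi_a,\phi_{a+b}\rangle = \langle \phi,\phi_b\rangle$, setting $\rho_c := \langle \phi,\phi_c\rangle$ one gets
$$R(a,b) = \frac{|\,1+\rho_{a+b}-\rho_a-\rho_b\,|}{2\sqrt{(1-\rho_a)(1-\rho_b)}}.$$
Three facts on $\rho$ will be used: $\rho$ is continuous on $\RR^d$ (by $\HS$ and the Lebesgue theorem), $\rho_c \in [-1,1]$ with $\rho_c = 1$ iff $c=0$ (equality in Cauchy--Schwarz applied to $\langle\phi,\phi_c\rangle$), and $\rho_c \to 0$ as $\|c\|\to\infty$ (Proposition~\ref{prop:decorrelation}).

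Next I partition into ``bounded'' and ``unbounded'' regimes. If $(a_n)$ and $(b_n)$ both remain in a compact set, extract $a_n\to a_\star$, $b_n\to b_\star$ with $\|a_\star+b_\star\|\geq \epsilon$. When $a_\star,b_\star \neq 0$, continuity of $R$ on $(\RR^d)^\star\times(\RR^d)^\star$ (Lemma~\ref{lemma:Rcontinu}) gives $R(a_\star,b_\star)=1$, i.e.\ $\phi-\phi_{a_\star}$ and $\phi_{a_\star+b_\star}-\phi_{a_\star}$ are proportional. This yields $\phi = (1-\lambda)\phi_{a_\star}+\lambda\phi_{a_\star+b_\star}$ for some $\lambda\in\RR$. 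Taking Fourier transforms on the open set $\{\hat\phi\neq 0\}$ (non-empty since $\phi\in\LL^1$) and extending by analyticity gives $1 = (1-\lambda)e^{-\ii\xi\ps a_\star} + \lambda e^{-\ii\xi\ps(a_\star+b_\star)}$ for every $\xi\in\RR^d$; differentiating twice at $\xi=0$ along any direction yields $(1-\lambda)(\xi\ps a_\star)^2+\lambda(\xi\ps(a_\star+b_\star))^2 = 0$, from which a Fourier / identifiability argument (exactly as in the proof of Proposition~\ref{prop:id}) forces $a_\star=0$ and $a_\star+b_\star=0$, contradicting $\|a_\star+b_\star\|\geq \epsilon$. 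If instead $a_n\to 0$ (say) while $b_n\to b_\star$ with $\|b_\star\|\geq \epsilon$, extract further so that $a_n/\|a_n\|\to e\in \mathbb{S}^1$; the first limit in Lemma~\ref{lemma:Rcontinu} gives
$$R(a_n,b_n) \longrightarrow \frac{|\langle d_e[\phi],\phi_{b_\star}-\phi\rangle|}{\|d_e[\phi]\|_2\,\|\phi_{b_\star}-\phi\|_2} = 1,$$
so $d_e[\phi]$ is proportional to $\phi-\phi_{b_\star}$, which contradicts Proposition~\ref{prop:phitec}(iv). The symmetric case $b_n\to 0$ is identical, and the case $a_n,b_n\to 0$ is excluded by $\|a_n+b_n\|\geq\epsilon$.

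In the unbounded regime, WLOG $\|a_n\|\to\infty$. If $(b_n)$ stays bounded, then also $\|a_n+b_n\|\to\infty$, so $\rho_{a_n}\to 0$ and $\rho_{a_n+b_n}\to 0$; extracting $b_n\to b_\star$ gives $R(a_n,b_n)\to \sqrt{1-\rho_{b_\star}}/2 \leq \sqrt{2}/2 < 1$, a contradiction. If $\|b_n\|\to\infty$ as well, then $\rho_{a_n},\rho_{b_n}\to 0$; either $\|a_n+b_n\|\to\infty$ (then $R\to 1/2$) or, extracting, $a_n+b_n\to c_\star$ with $\|c_\star\|\geq\epsilon$, in which case $\rho_{c_\star}<1$ and $R(a_n,b_n)\to (1+\rho_{c_\star})/2<1$, again a contradiction. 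All cases being exhausted, the uniform constant $c_\epsilon$ exists.

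The main obstacle will be the identification case $R(a_\star,b_\star)=1$: one must argue that, for \emph{every} admissible $\lambda\in\RR$ (not just $\lambda\in(0,1)$), the equation $\phi = (1-\lambda)\phi_{a_\star} + \lambda\phi_{a_\star+b_\star}$ forces $a_\star + b_\star = 0$. The Fourier/derivative computation at $\xi=0$ handles this cleanly, since the coefficient of the second-order term in $\xi$ must vanish along every direction, and a one-dimensional reduction in the case $a_\star$ and $a_\star+b_\star$ are parallel closes the argument.
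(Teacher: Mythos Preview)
Your approach is correct and structurally parallel to the paper's: both rely on the closed formula
$R(a,b)=\dfrac{|1+\rho_{a+b}-\rho_a-\rho_b|}{2\sqrt{(1-\rho_a)(1-\rho_b)}}$,
the asymptotic decorrelation $\rho_c\to 0$, and a compactness argument for the bounded region. The paper builds explicit thresholds $M_\epsilon,A_\epsilon$ and then invokes continuity on the remaining compact piece, while you run the same trichotomy through a contradiction/subsequence argument. Your Fourier treatment of the equality case $R(a_\star,b_\star)=1$ is in fact more explicit than the paper, which simply appeals to ``Cauchy--Schwarz'' and the claim $R=1\Leftrightarrow a+b=0$ stated in Lemma~\ref{lemma:Rcontinu}; note however that the second-derivative identity $(1-\lambda)(\xi\ps a_\star)^2+\lambda(\xi\ps(a_\star+b_\star))^2=0$ alone is \emph{not} enough when $\lambda\notin[0,1]$ --- you must combine it with the first-derivative identity $(1-\lambda)a_\star+\lambda(a_\star+b_\star)=0$ to force $\lambda=1$ and hence $a_\star+b_\star=0$.

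There is one small gap to patch. In the unbounded regime with $\|a_n\|\to\infty$ and $b_n\to b_\star$, your limit $R(a_n,b_n)\to\sqrt{1-\rho_{b_\star}}/2$ is only valid when $b_\star\neq 0$: if $b_\star=0$ both numerator and denominator of your formula vanish and the limit is indeterminate. The fix is short: write $\phi_{a_n+b_n}-\phi_{a_n}=-b_n\ps\nabla\phi_{a_n}+O(\|b_n\|^2)$, use $\langle\phi_{a_n},\nabla\phi_{a_n}\rangle=0$ (Proposition~\ref{prop:phitec}$(i)$) and $\langle\phi,\nabla\phi_{a_n}\rangle\to 0$ as $\|a_n\|\to\infty$ to get $N(a_n,b_n)=o(\|b_n\|)$, while $D(a_n,b_n)\sim \sqrt{2}\,\|\phi-\phi_{b_n}\|\asymp\|b_n\|$, hence $R(a_n,b_n)\to 0$. (The paper's direct argument glosses over the same uniformity issue.)
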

\begin{proof}

Consider the function $\varphi: h \longmapsto |\langle \phi,\phi_h \rangle| = \langle \phi,\phi_h\rangle$, the last equality resulting from the positivity of $\phi$ and $\phi_h$.
The dominated convergence theorem
shows that $\varphi$ is continuous and the Cauchy-Schwarz inequality implies that $\varphi$ is a bounded function whose values belong to $[0,1]$. From the identifiability result of Proposition \ref{prop:id},  we then have:
$$
\varphi(h)=1 \Longleftrightarrow h=0.
$$
Finally, Proposition \ref{prop:decorrelation} implies that $\lim_{\|h\| \longmapsto + \infty} \varphi(h)=0$. Taken together, these elements show that for any $\epsilon>0$, $\varphi$ attains its upper bound on 
$B(0,\epsilon)^{c}$. It yields:
\begin{equation}\label{eq:etaeps}
\forall \epsilon >0 \quad \exists \eta_{\epsilon}>0 \qquad 
\sup_{\|h\| \geq \epsilon} \varphi(h) \leq 1-\eta_{\epsilon}.
\end{equation}

$\bullet$
We first consider the case where $\|a\| \wedge \|b\|  \longrightarrow + \infty$ with $\epsilon \leq \|a+b\|$. In that case, if we denote $h=a+b$ and use
$\lim_{\|a\| \longrightarrow + \infty} \langle \phi,\phi_a \rangle = 0,$
 then we can find $M_{\epsilon}$ large enough such that:
\begin{eqnarray*}
\lefteqn{\|a\| \wedge \|b\|  \geq M_{\epsilon}} &\Longrightarrow  \\
&    
\frac{|N(a,b)|}{D(a,b)} = \frac{\left|1+\langle \phi,\phi_h\rangle-\langle \phi , \phi_a\rangle - \langle \phi,\phi_b\rangle\right| }{\|\phi-\phi_a\|_2 \|\phi-\phi_b\|_2} \leq \frac{1+\sup_{\epsilon \leq |h|} \varphi(h)}{2} \times \frac{1-\frac{\eta_{\epsilon}}{3}}{1-\frac{\eta_{\epsilon}}{2}} \leq 1-\frac{\eta_{\epsilon}}{3},
\end{eqnarray*}
where $\eta_{\epsilon}$ is defined in \eqref{eq:etaeps}.\\

$\bullet$
We now consider the case where $\|a\| \longrightarrow + \infty$ although $|b|$ remains bounded by $M_{\epsilon}$, so that $b \in B(0,M_{\epsilon}) \setminus \{0\}$. In that case, we compute:
$$
 N(a,b) =   \left| \langle \phi , \phi_{a+b} \rangle -  \langle \phi , \phi_{a} \rangle -  \langle \phi , \phi_{b} \rangle+ \|\phi_a\|_2^2 \right| \longrightarrow 1-\langle \phi , \phi_b \rangle \quad \text{if} \quad \|a\| \longrightarrow + \infty.
$$
At the same time, we also consider $D$ and check that:
$$
D(a,b) = \|\phi-\phi_a\|_2 \|\phi_{a+b}-\phi_a\|_2 \longrightarrow 2 \sqrt{1-\langle \phi, \phi_b \rangle }
 \quad \text{when} \quad \|a\| \longrightarrow + \infty.
$$
We then obtain:
$$
\lim_{\|a\|\longrightarrow + \infty} R(a,b) = \frac{\sqrt{1-\langle \phi,\phi_b\rangle}}{2} \leq \frac{1}{2}.
$$
Hence, we can find a constant $A_{\epsilon}$ sufficiently large such that:
$$
\forall \|a\| \geq A_{\epsilon} \quad \forall b \in B(0,M_{\epsilon}) \quad R(a,b) \leq \frac{3}{4}.
$$

$\bullet$
If $a$ and $b$ now belong to the compact set:
$$
\mathcal{E}_{\epsilon} := \left\{(a,b) \in \RR^2 \, : \, \|a\| \leq A_{\epsilon}, \, \|b\| \leq M_{\epsilon}, \, \|a+b\| \geq \epsilon\right\},
$$
we know that $R$ is a continuous function on $\mathcal{E}_{\epsilon,A,M}$ and attains its upper bound, which is strictly lower than $1$ by the Cauchy-Schwarz inequality. Consequently, 
$$
\exists \tilde{\eta}_{\epsilon} >0 \, \forall (a,b) \in \mathcal{E}_{\epsilon} \qquad 
R(a,b) \leq 1-\tilde{\eta}_{\epsilon}.
$$
Taking all the bounds obtained  outside of the diagonal together, we obtain the lemma with $c_\epsilon = (\tilde{\eta}_{\epsilon} \wedge \eta_{\epsilon}/3 \wedge 1/4)$.
\end{proof}

\section{Proofs of the lower bounds}\label{sect:LWB}
%

\subsection{Asymmetric risk}
We begin by a useful lemma, which is a generalization of the Le Cam method for proving lower bounds if the loss involved in the statistical model is not symmetric, meaning that $\rho(\theta_1,\theta_2)$ is generally not equal to $\rho(\theta_2, \theta_1)$, but still satisfies a weak triangle inequality. Hence, the Le Cam Lemma requires a small modification in the spirit of the remark of \cite{Yu} (Example 2, Section 3).

In the sequel, $d_{\mbox{\tiny TV}}(\mathbb{P},\mathbb{Q})$ and $\KL(\mathbb{P},\mathbb{Q})$ denote the total variation distance and the Kullback-Leibler divergence between two measures, $\mathbb{P}$ and $\mathbb{Q}$, respectively. 
\begin{lemma}
\label{propgen-lowerbound}
Let $(\PP_\theta)_{\theta\in\Theta}$ be a family of measures indexed by $\Theta$ and assume that $\rho:(\theta_1,\theta_2)\in \Theta^2 \mapsto \rho(\theta_1,\theta_2)\in\RR^+$ satisfies the weak triangle inequality:
\begin{equation}
\label{condrho}
\forall (\theta_1,\theta_2,\theta_3)\in \Theta^3,\ 
\rho(\theta_1,\theta_3) + \rho(\theta_2,\theta_3) \geq \rho(\theta_1,\theta_2) \wedge \rho(\theta_2,\theta_1).
\end{equation}
Let $\Phi:\RR^+ \rightarrow  \RR^+$ be a non-decreasing function. 
Let $\delta>0$ and $(\theta_1,\theta_2)\in \Theta^2$ such that 
$\rho(\theta_1,\theta_2) \wedge \rho(\theta_2,\theta_1) \geq 2 \delta$. Then,
\begin{eqnarray*}
\underset{\hat \theta}{\inf}\ \underset{\theta\in\Theta}{\sup}\ \EE\left[ \Phi(\rho(\theta,\hat \theta)) \right] 
& \geq &  \frac{\Phi(\delta)}{2} \left\{1  - d_{\mbox{\tiny TV}}(\PP^{\otimes^n}_{\theta_1},\PP^{\otimes^n}_{\theta_2})\right\},\\
& \geq & 
\frac{\Phi(\delta)}{2} \left\{1 - \sqrt{\frac n 2 \KL(\PP_{\theta_1},\PP_{\theta_2})}\right\},
\end{eqnarray*}
where the infimum is taken over all estimators $\hat \theta$. 
\end{lemma}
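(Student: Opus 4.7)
My strategy is the standard Le Cam two-point reduction, adapted to handle the asymmetry of $\rho$ via the weak triangle condition \eqref{condrho}. I would begin by restricting the supremum over $\Theta$ to the two hypotheses $\theta_1$ and $\theta_2$ and bounding the maximum by the average, which yields
\[
\inf_{\hat\theta}\sup_{\theta\in\Theta}\EE\!\left[\Phi(\rho(\theta,\hat\theta))\right]
\;\geq\;
\frac{1}{2}\inf_{\hat\theta}\Big\{\EE_{\theta_1}\!\left[\Phi(\rho(\theta_1,\hat\theta))\right]+\EE_{\theta_2}\!\left[\Phi(\rho(\theta_2,\hat\theta))\right]\Big\}.
\]
Fix any estimator $\hat\theta$ (a measurable function of the sample) and introduce the two events
\[
A=\{\rho(\theta_1,\hat\theta)\geq \delta\},\qquad B=\{\rho(\theta_2,\hat\theta)\geq \delta\}.
\]
The key step is to check that $A\cup B$ equals the whole sample space. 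Indeed, applying the weak triangle inequality \eqref{condrho} with $\theta_3=\hat\theta$ gives $\rho(\theta_1,\hat\theta)+\rho(\theta_2,\hat\theta)\geq \rho(\theta_1,\theta_2)\wedge\rho(\theta_2,\theta_1)\geq 2\delta$, so both quantities cannot be strictly below $\delta$ simultaneously. Hence $B\supseteq A^{c}$, a point that I expect to be the one subtle place where the asymmetry matters.

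From here the argument is mechanical. Since $\Phi$ is non-decreasing,
\[
\EE_{\theta_1}\!\left[\Phi(\rho(\theta_1,\hat\theta))\right]\geq \Phi(\delta)\,\PP_{\theta_1}(A),
\qquad
\EE_{\theta_2}\!\left[\Phi(\rho(\theta_2,\hat\theta))\right]\geq \Phi(\delta)\,\PP_{\theta_2}(B),
\]
so that
\[
\sup_{\theta\in\Theta}\EE\!\left[\Phi(\rho(\theta,\hat\theta))\right]\geq
\frac{\Phi(\delta)}{2}\bigl(\PP_{\theta_1}(A)+\PP_{\theta_2}(B)\bigr)
\geq \frac{\Phi(\delta)}{2}\bigl(\PP_{\theta_1}(A)+1-\PP_{\theta_2}(A)\bigr),
\]
using $B\supseteq A^c$. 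The right-hand side is at least $\frac{\Phi(\delta)}{2}\bigl(1-|\PP_{\theta_2}(A)-\PP_{\theta_1}(A)|\bigr)\geq \frac{\Phi(\delta)}{2}\bigl(1-d_{\mbox{\tiny TV}}(\PP_{\theta_1},\PP_{\theta_2})\bigr)$ by definition of total variation. Since $\hat\theta$ depends on the entire sample, the natural event $A$ lives on the product space, so it is the product measures $\PP_{\theta_1}^{\otimes n}$ and $\PP_{\theta_2}^{\otimes n}$ that enter, giving the first displayed bound.

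To conclude, I would invoke Pinsker's inequality $d_{\mbox{\tiny TV}}(\mu,\nu)\leq \sqrt{\KL(\mu,\nu)/2}$ and tensorisation of the Kullback--Leibler divergence, $\KL(\PP_{\theta_1}^{\otimes n},\PP_{\theta_2}^{\otimes n})=n\,\KL(\PP_{\theta_1},\PP_{\theta_2})$, to obtain the second inequality. The main obstacle is really the first step, namely recognising that the weak triangle inequality as formulated is exactly strong enough to force $A\cup B$ to cover the whole space despite $\rho$ not being symmetric; once this is in place, the remaining computation is routine.
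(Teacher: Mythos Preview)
Your argument is correct and is essentially the same Le Cam two-point reduction as in the paper: the paper packages the step $A\cup B=\Omega$ by introducing the test $\Psi(\hat\theta)=\argmin_{v\in\{1,2\}}\rho(\theta_v,\hat\theta)$ and showing $\rho(\theta_v,\hat\theta)<\delta\Rightarrow\Psi(\hat\theta)=v$, whereas you work directly with the events, but the content is identical. The only implicit assumption worth flagging is that $\hat\theta$ takes values in $\Theta$ so that \eqref{condrho} may be applied with $\theta_3=\hat\theta$; the paper makes the same tacit assumption.
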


\begin{proof}
First, we observe that:
$$
\EE[\Phi(\rho(\theta,\hat \theta))] \geq \Phi(\delta) \PP(\rho(\theta,\hat \theta) \geq \delta ),
$$
since $\Phi$ is a non-decreasing function. 
Let $\cV=\{1,2\}$ and $\Psi(\hat \theta)= \underset{v\in \cV}{\argmin}\ \rho(\theta_v,\hat \theta)$. \\
We can show that $\rho(\theta_v,\hat \theta) < \delta$ implies that $\Psi(\hat \theta) = v$.  According to Condition \eqref{condrho}, we have:
$$
\rho(\theta_v,\hat \theta) \geq \rho(\theta_v,\theta_{v'}) \wedge \rho(\theta_{v'},\theta_{v})  - \rho(\theta_{v'},\hat \theta) > 2 \delta - \rho(\theta_{v'},\hat \theta).
$$
Now, if $\rho(\theta_v,\hat \theta) < \delta$, then $\delta > 2 \delta - \rho(\theta_{v'},\hat \theta)$, so that
$\rho(\theta_{v'},\hat \theta) > \delta$, which is necessarily larger than $\rho(\theta_v,\hat \theta)$. Hence, we obtain $\Psi(\hat \theta) = v$.\\
Equivalently, for $v\in\{1,2\}$, we have $\Psi(\hat \theta) \neq v \Longrightarrow \rho(\theta_v,\hat \theta) > \rho(\theta_{v'},\hat \theta)$ since:
$$
2 \delta \leq \rho(\theta_v,\theta_{v'}) \wedge \rho(\theta_{v'},\theta_{v}) \leq \rho(\theta_v,\hat \theta) +\rho(\theta_{v'},\hat \theta) \leq 2 \rho(\theta_v,\hat \theta).
$$

The rest of the proof proceeds from the standard Le Cam argument:  $\Phi$ is non decreasing so that:
\begin{eqnarray*}
\underset{\theta\in\Theta}{\sup}\ \EE[\Phi(\rho(\theta,\hat \theta))] 
&\geq& \Phi(\delta)\ \underset{\theta\in\Theta}{\sup}\  \PP(\rho(\theta,\hat\theta) \geq \delta)\\
&\geq& \frac{\Phi(\delta)}{2} \{\PP(\rho(\theta_1,\hat \theta)\geq \delta) + \PP(\rho(\theta_2,\hat \theta)\geq \delta)\}\\
&\geq&\frac{\Phi(\delta)}{2} \{\PP^{\otimes^n}_{\theta_1}(\Psi(\hat \theta)\neq 1) + \PP^{\otimes^n}_{\theta_2}(\Psi(\hat \theta)\neq 2)\}.
\end{eqnarray*}
Taking an infimum over all tests $\Psi$ (see, \textit{e.g.}, \cite{LY}) we obtain:
\begin{eqnarray*}
\underset{\hat \theta}{\inf}\ \underset{\theta\in\Theta}{\sup}\ \EE[\Phi(\rho(\theta,\hat\theta))] 
&\geq & \frac{\Phi(\delta)}{2} \underset{\Psi}{\inf} \{\PP^{\otimes^n}_{\theta_1}(\Psi \neq 1) + \PP^{\otimes^n}_{\theta_2}(\Psi \neq 2)\}\\
&\geq&  \frac{\Phi(\delta)}{2} \left\{1  - d_{\mbox{\tiny TV}}(\PP^{\otimes^n}_{\theta_1}, \PP^{\otimes^n}_{\theta_2})\right\}.
\end{eqnarray*}
Pinsker's inequality:
$$
d_{\mbox{\tiny TV}}(\PP^{\otimes^n}_{\theta_1} ,\PP^{\otimes^n}_{\theta_2}) \leq \sqrt{\frac 1 2 \KL(\PP^{\otimes^n}_{\theta_1}, \PP^{\otimes^n}_{\theta_2})} = \sqrt{\frac n 2 \KL(\PP_{\theta_1},\PP_{\theta_2})}
$$ ends the proof.
\end{proof}

\subsection{Lower bound for the strong contamination model}\label{sub:LWBR1}

We now study the lower bounds in the first regime, namely when $\|\mu\|$ is lower bounded by a constant $m$ that is independent of $n$.\\

{\sc Proof of Theorem \ref{th:lb:R1}}

\paragraph{Item $(i)$} We apply Lemma \ref{propgen-lowerbound} with $\Phi(t)=t^2$ and the loss function $\rho$ defined as:
 $$
\forall (\theta_1,\theta_2) \in \Thh^2 \qquad 
 \rho(\theta_1,\theta_2)= \lambda_1 \|\mu_1 - \mu_2\|.$$
Remark that $\rho$ satisfies the weak triangle inequality \eqref{condrho}. Indeed, for all $ (\theta_1,\theta_2,\theta_3) \in \Thh^3$, we have:
\begin{eqnarray*} 
\rho(\theta_1,\theta_3) + \rho(\theta_2,\theta_3) &=& \lambda_1 \|\mu_1 - \mu_3\| + \lambda_2 \|\mu_2 - \mu_3\|\\
&\geq& \min(\lambda_1,\lambda_2) \|\mu_1 - \mu_2\| \\
&\geq & \rho(\theta_1,\theta_2) \wedge \rho(\theta_2,\theta_1).
\end{eqnarray*}

We introduce the subset
$$
\Thhb:= \left\{ \theta= (\lambda,\mu) : \frac{c}{\|\mu\|^2 \sqrt{n} } \leq \lambda \leq \bar{\lambda},\  m \leq \|\mu\| \leq  M   \right\}
$$
where $0<m<M$ and $0<\frac{c}{m^2 \sqrt{n} } < \bar\lambda <1$. Then, $\Thhb \subset \Thh$. 
We consider $\theta_1=(\lambda,\mu_1)$ and $\theta_2=(\lambda,\mu_2)$; their values will be chosen later  to ensure that $(\theta_1,\theta_2) \in \Thhb^2$.
According to Lemma~\ref{propgen-lowerbound} applied with $\delta  = \frac{\lambda \|\mu_1-\mu_2\|}{2}$, 
we can write:
\begin{eqnarray}
\underset{\hat\theta}{\inf}\  \underset{\theta\in\Thh}{\sup}\ \EE[\lambda^2 \|\hat \mu - \mu\|^2] 
&\geq&  \underset{\hat\theta}{\inf}\  \underset{\theta\in\Thhb}{\sup}\ \EE[\lambda^2 \| \hat \mu - \mu\|^2]\nonumber\\ 
&\geq& \frac{\delta^2}{2} \left\{1- \sqrt{\frac n 2 \KL(\PP_{\theta_1},\PP_{\theta_2})}\right\}.\label{eq:mino_minimax}
\end{eqnarray}

We can compute the Kullback-Leibler divergence between the two mixtures $\PP_{\theta_1}$ and $\PP_{\theta_2}$: if 
$f_1 =(1-\lambda) \phi + \lambda \phi_{\mu_1}$ (resp. $f_2 =(1-\lambda) \phi + \lambda \phi_{\mu_2}$) is the density of $\PP_{\theta_1}$ (resp. $\PP_{\theta_2}$) w.r.t. the Lebesgue measure, we have: 
\begin{eqnarray*}
\KL(\PP_{\theta_1},\PP_{\theta_2})
&=& \int \log\left[\frac{f_1(x)}{f_2(x)}\right] f_1(x) dx\\
&=& \int \log\left[1 + \frac{f_1(x) - f_2(x)}{f_2(x)}\right] f_1(x) dx\\
&\leq & \int  \frac{f_1(x) - f_2(x)}{f_2(x)} f_1(x)  dx,
\end{eqnarray*}
where we used the inequality $\log (1+t) \leq t$. If we once again write $f_1=f_2+f_1-f_2$, we obtain:
\begin{eqnarray*}
\KL(\PP_{\theta_1},\PP_{\theta_2}) &\leq& \int \frac{f_1(x) - f_2(x)}{f_2(x)}  \left[f_2(x)+f_1(x)-f_2(x)\right] dx\\
& = & \int \frac{[f_1(x) - f_2(x)]^2}{f_2(x)}dx \\
&\leq& \lambda^2  \int \frac{[\phi_{\mu_1}(x) - \phi_{\mu_2}(x)]^2}{(1- \lambda)\phi(x)+\lambda \phi_{\mu_{2}}(x)}  dx
\end{eqnarray*}
since $f_2(x) \geq (1-\lambda)\phi(x)$ and $f_1(x)-f_2(x) = \lambda [\phi_{\mu_1}(x) - \phi_{\mu_2}(x)]$. On the basis of Assumption $\HL$, we know that $|\phi_{\mu_1}-\phi_{\mu_2}| \leq \|\mu_1-\mu_2\| g$ and we obtain:
\begin{equation}\label{eq:majoKL}
\KL(\PP_{\theta_1},\PP_{\theta_2}) \leq  \frac{\lambda^2 \|\mu_1 - \mu_2\|^2 \cJ}{1- \bar\lambda},
\end{equation}
where $\cJ:= \|g \phi^{-1/2}\|_2^2$ is the constant involved in $\HL$. \\

We now choose $\lambda,\mu_1$ and $\mu_2$ so that we obtain the largest possible value in \eqref{eq:mino_minimax}, while satisfying the constraints given in $\Thhb$. Without loss of generality, we set $\mu_1^{(1)}<\mu_2^{(1)}$ and we need to find a choice of these parameters such that $m\leq \mu_{1}^{(1)}< \mu_2^{(1)}\leq M$ and $\frac{c}{(\mu_1^{(1)})^2 \sqrt{n}}\leq \lambda \leq \bar{\lambda}$. We set $\mu_1=(\mu_1^{(1)},0,\ldots,0)$ and $\mu_2=(\mu_2^{(1)},0,\ldots,0)$ so that
$$
\mu_1^{(1)}=m \qquad \text{and} \qquad \lambda=\frac{c}{m^2 \sqrt{n}}<\bar{\lambda}.
$$
For a given $\epsilon>0$, we choose $\mu_2^{(1)}$ such that
$
\frac{n}{2} \KL(\PP_{\theta_1},\PP_{\theta_2}) \leq 1-\epsilon.
$
Using \eqref{eq:majoKL}, we arrive at the calibration:
$$
\mu_2^{(1)}-\mu_1^{(1)}=\sqrt{\frac{2(1-\bar{\lambda}) (1-\epsilon)}{\lambda^2 \cJ n} }.
$$
It remains to check that $\mu_2^{(1)}\leq M$. From our choice of $\lambda$ and $\mu_1^{(1)}$, we see that:
$$
\mu_2^{(1)}= m \left[ 1+\sqrt{\frac{2 (1-\bar \lambda) m^2}{c^2 \cJ}(1-\epsilon)}\right] \leq m \left[ 1+\sqrt{\frac{2 m^2 (1-\epsilon)}{c^2 \cJ}}\right],
$$
which can be made smaller than $M$ if $1-\epsilon \leq \frac{c^2 \cJ (M-m)^2}{2 m^4}$. If we plug these choices of $\lambda,\mu_1$ and $\mu_2$ into \eqref{eq:mino_minimax}, we obtain:
\begin{eqnarray*}
	\underset{\hat\theta}{\inf}\  \underset{\theta\in\Thhb}{\sup}\ \EE[\lambda^2 \|\hat \mu - \mu\|^2] &\geq&
 \frac{(1-\bar{\lambda}) (1-\epsilon) \epsilon}{8  \cJ n},
	\end{eqnarray*}
	which is the desired lower bound of the minimax risk \eqref{eq:lbmu:R1}.
\paragraph{Item $(ii)$} We keep the same $\Phi$ and define $\rho(\theta_1,\theta_2) = |\lambda_1 - \lambda_2| = \rho(\theta_2,\theta_1)$. We consider $\theta_1=(\lambda_1,\mu)$ and $\theta_2=(\lambda_2,\mu)$ such that $|\lambda_1 - \lambda_2| = \frac{\epsilon}{\sqrt n}$ and 
$$\frac{c}{m^2 \sqrt n} = \lambda_1 < \lambda_2 \leq \bar\lambda,$$
$\mu$ and $\epsilon$ have to be chosen hereafter. Since $\lambda_2 = \lambda_1+\frac{\epsilon}{\sqrt n} \leq \bar{\lambda}$, we must choose $\epsilon$ such that:
\begin{equation}\label{eq:cond1eps} 
\epsilon \leq \bar{\lambda} \sqrt{n} - \frac{c}{m^2},
\end{equation}
which is possible since we assumed that $\frac{c}{m^2 \sqrt{n}} < \bar{\lambda}$.
From Lemma ~\ref{condrho},

\begin{eqnarray*}
\underset{\hat\theta}{\inf}\  \underset{\theta\in\Thh}{\sup}\ \EE[(\lambda-\hat\lambda)^2] 
&\geq& \underset{\hat\theta}{\inf}\  \underset{\theta\in\Thhb}{\sup}\ \EE[(\lambda-\hat\lambda)^2] \\
&\geq& \frac{\epsilon^2}{2 n} \left\{1 - \sqrt{\frac n 2 \KL(\PP_{\theta_1},\PP_{\theta_2})}\right\}.
\end{eqnarray*}

We can upper bound the Kullback-Leibler divergence as:
\begin{eqnarray*}
\KL(\PP_{\theta_1},\PP_{\theta_2})&\leq& \int  \left[f_1(x) - f_2(x)\right]^2 f_2(x)^{-1} dx \\
&\leq& (\lambda_1 - \lambda_2)^2 \int \left[\phi_{\mu}(x) - \phi(x)\right]^2  f_2(x)^{-1} dx \\
&\leq& \frac{  (\lambda_1 - \lambda_2)^2 \|\mu\|^2 }{1-\bar{\lambda}} \int g(x)^2 \phi(x)^{-1} dx\\
&\leq& \frac{\|\mu\|^2 \epsilon^2 \cJ}{(1-\bar\lambda) n}.
\end{eqnarray*}
By choosing $\mu=(\mu^{(1)},0,\ldots,0)$ with
\begin{equation}\label{eq:cond2eps} 
	\mu^{(1)}=\frac{m+M}{2} \qquad \text{and} \qquad  \epsilon \leq \sqrt{\frac{2 (1-\bar{\lambda})}{ \cJ (m+M)^2}},
\end{equation}
we obtain
$\frac n 2 \KL(\PP_{\theta_1},\PP_{\theta_2}) \leq \frac{1}{4}.$ Considering the minimal admissible value of $\epsilon$ in \eqref{eq:cond1eps} and \eqref{eq:cond2eps} now leads to a choice of the parameters $\theta_1$ and $\theta_2$ such that:
$$
\underset{\hat\theta}{\inf}\  \underset{\theta\in\Thh}{\sup}\ \EE[(\lambda-\hat\lambda)^2] \geq \frac{\epsilon^2}{4 n}. 
$$
This last inequality is the second lower bound \eqref{eq:lblambda:R1}.
\hfill $\square$

\subsection{Lower bound for the weak contamination model}\label{sub:LWBR2}

{\sc Proof of Theorem \ref{th:lb:R2}}

\paragraph{Point $(i)$} We consider $\Phi(t)=t^2$ and the loss function $\rho$ defined as:
$$\rho(\theta_1,\theta_2) = \|\mu_1\|^2 |\lambda_1-\lambda_2|.$$
 Note that $\rho$ satisfies \eqref{condrho} since $\forall (\theta_1,\theta_2,\theta_3) \in \Th^3$,
\begin{eqnarray*}
\rho(\theta_1,\theta_3) + \rho(\theta_2,\theta_3) &=& \|\mu_1\|^2 |\lambda_1 - \lambda_3| + \|\mu_2\|^2 |\lambda_2 - \lambda_3|\\
&\geq& \min(\|\mu_1\|^2,\|\mu_2\|^2) |\lambda_1 - \lambda_2| \\
&\geq & \rho(\theta_1,\theta_2) \wedge \rho(\theta_2,\theta_1).
\end{eqnarray*}

 To obtain a convenient lower bound, we need to use Lemma \ref{propgen-lowerbound} and find a couple of parameters $(\theta_1,\theta_2)$ that belongs to the admissible set and such that $ \KL(\PP_{\theta_1},\PP_{\theta_2})$ is small enough. In particular, the proximity between $\PP_{\theta_1}$ and $\PP_{\theta_2}$ will be obtained by a careful matching of the first moments of the two distributions, which is a good method for obtaining efficient lower bounds in mixture models (see, \textit{e.g.}, \cite{BG14} or \cite{KH15}). We give an example of this method below. First, remark that: 
$$
	\KL(\PP_{\theta_1},\PP_{\theta_2}) = \int \log \left[\frac{f_1(x)}{f_2(x)}\right] f_1(x) dx.
$$

Since $\phi$ satisfies $\HS$, then $\phi$ is a $\cC^3$ function on $\RR^d$, considering a shift $\mu = (\mu^{(1)},0,\ldots,0) =  o(1)$, we can write a third order Taylor expansion:
$$
\forall x \in \RR^d \quad \phi_{\mu}(x) = \phi(x) - \mu^{(1)} d_1\phi(x) + \frac{ \{\mu^{(1)}\}^2 d_{11}\phi(x)}{2}- \frac{\{\mu^{(1)}\}^{3}}{6} d_{111}\phi(\xi_{x,\mu}),
$$
where $\xi_{x,\mu}$ belongs to the interval defined by $x$ and $x-\mu$ and $d_1\phi$ (resp. $d_{11}\phi$ and $d_{111}\phi$) denotes the first (resp. second and third) partial derivative of $\phi$ w.r.t. the first coordinate of $x$.
 In particular, assuming that $d_{111}\phi$ is bounded on $\RR^d$ leads to:
$$
\forall x \in \RR^d \quad \phi_{\mu}(x) = \phi(x) - \mu^{(1)} d_1\phi(x) + \frac{\{\mu^{(1)}\}^2}{2} d_{11}\phi(x) +o(\|\mu\|^2).
$$
This Taylor expansion permits us to write, for small values of $\mu_1^{(1)}$:
{\small
\begin{eqnarray*}
\log [f_1(x)] &=& \log [(1-\lambda_1) \phi(x) + \lambda_1 \phi_{\mu_1}(x)]\\
&=& \log \left[(1-\lambda_1) \phi(x) + \lambda_1 \phi(x) - \lambda_1 \mu_1^{(1)} d_1\phi(x) + \frac 1 2 \lambda_1 \{\mu_1^{(1)}\}^2 d_{11}\phi(x) + o (\|\mu_1\|^2)\large\right]\\
&=& \log \left[\phi(x)\right] + \log \left[1 - \lambda_1 \mu_1^{(1)} \frac{d_1\phi(x)}{\phi(x)} + \frac 1 2 \lambda_1 \{\mu_1^{(1)}\}^2 \frac{d_{11}\phi(x)}{\phi(x)}+ o (\|\mu_1\|^2)\right]\\
&=& \log \left[\phi(x)\right] - \lambda_1 \mu_1^{(1)} \frac{d_1\phi(x)}{\phi(x)} + \frac 1 2 \lambda_1 \{\mu_1^{(1)}\}^2 \frac{d_{11}\phi(x)}{\phi(x)} \\ &&- \frac 1 2 \lambda_1^2 \{\mu_1^{(1)}\}^2 \left(\frac{d_1\phi(x)}{\phi(x)}\right)^2 + o(\|\mu_1\|^2).
\end{eqnarray*} 
In the same way, for small values of $\mu_2$:
\begin{eqnarray*}
\log [f_2(x)] &=& \log [(1-\lambda_2) \phi(x) + \lambda_2 \phi_{\mu_2}(x)]\\
&=& \log \left[\phi(x)\right] - \lambda_2 \mu_2^{(1)} \frac{d_1\phi(x)}{\phi(x)} + \frac 1 2 \lambda_2 \{\mu_2^{(1)}\}^2 \frac{d_{11}\phi(x)}{\phi(x)}  \\
&&- \frac 1 2 \lambda_2^2 \{\mu_2^{(1)}\}^2 \left(\frac{d_1\phi(x)}{\phi(x)}\right)^2 + o(\|\mu_2\|^2).
\end{eqnarray*} }
We thus obtain:

 \begin{eqnarray*}
 \lefteqn{
\log [f_1(x)] - \log [f_2(x)]} \\ &= (\lambda_2 \mu_2^{(1)}  - \lambda_1 \mu_1^{(1)} ) \frac{d_1\phi(x)}{\phi(x)} 
                                    +  \frac 1 2 (\lambda_1 \{\mu_1^{(1)}\}^2 -  \lambda_2 \{\mu_2^{(1)}\}^2 ) \frac{d_{11}\phi(x)}{\phi(x)} \\
                                 &   + \frac 1 2 ( \lambda_2^2 \{\mu_2^{(1)}\}^2 - \lambda_1^2\{\mu_1^{(1)}\}^2) \left(\frac{d_1\phi(x)}{\phi(x)}\right)^2+ o(\|\mu_1\|^2) + o(\|\mu_2\|^2).
\end{eqnarray*}
In particular, we observe that the term above can be considered as a ``second order term" if $\theta_1$ and $\theta_2$ are chosen such that $\lambda_1  \mu_1^{(1)}= \lambda_2 \mu_2^{(1)}$, which corresponds to the first moment of $\PP_{\theta_1}$ and $\PP_{\theta_2}$. If $\lambda_1  \mu_1^{(1)}= \lambda_2  \mu_2^{(1)}$, we obtain:
\begin{eqnarray*}
 \lefteqn{
             \log [f_1(x)] - \log [f_2(x)]} \\
                                 &=  \frac 1 2 (\lambda_1 \{\mu_1^{(1)}\}^2  -  \lambda_2 \{\mu_2^{(1)}\}^2) \frac{d_{11}\phi(x)}{\phi(x)} + o(\|\mu_1\|^2) + o(\|\mu_2\|^2).
\end{eqnarray*}

We deduce that:
{\small \begin{eqnarray*}
\lefteqn{
\KL(\PP_{\theta_1},\PP_{\theta_2})}\\
&=& \int \left[ \frac 1 2 (\lambda_1 \{\mu_1^{(1)}\}^2 -  \lambda_2 \{\mu_2^{(1)}\}^2 ) \frac{d_{11}\phi(x)}{\phi(x)} +o (\|\mu_1\|^2)+o (\|\mu_2\|^2)
  \right] f_1(x) dx\\
&=&\frac 1 2 (\lambda_1 \{\mu_1^{(1)}\}^2 -  \lambda_2 \{\mu_2^{(1)}\}^2 ) \left[ (1- \lambda_1) \int d_{11}\phi(x) dx   + \lambda_1 \int \frac{d_{11}\phi(x) \phi(x-\mu_1)}{\phi(x)} dx \right]\\ & &  + o (\|\mu_1\|^2)+o (\|\mu_2\|^2).
\end{eqnarray*}}
The smoothness of $\phi$ leads to $\int d_{11}\phi(x) dx =0$. We deduce that:
{\small
\begin{eqnarray*}
\lefteqn{\int \frac{d_{11}\phi(x) \phi(x-\mu_1)}{\phi(x)} dx  }\\
&=& \int \frac{d_{11}\phi(x) }{\phi(x)} [\phi(x) - \mu_1^{(1)} d_1\phi(x) + \frac{\{\mu_{1}^{(1)}\}^2}{ 2} d_{11}\phi(x) + o (\|\mu_1\|^2)]dx \\
&=& \int d_{11}\phi(x) dx - \mu_1^{(1)} \int \frac{d_{11}\phi(x) d_{1}\phi(x)}{\phi(x)}dx + \frac 1 2 \{\mu_1^{(1)}\}^2 \int \frac{\{d_{11}\phi(x)\}^2}{\phi(x)} dx + o(\mu_1^2)dx.
\end{eqnarray*}}
Now, we choose for the density $\phi$ an even function ($\phi(x)=\phi(-x)$ for all $x\in \RR^d$) and we obtain that
$$
\KL(\PP_{\theta_1},\PP_{\theta_2})
= \frac 1 2 \{\mu_1^{(1)}\}^2 \cI_\phi + \underset{n\to+\infty}{o} (\|\mu_1\|^2),$$
where the last line comes from the fact that $x\mapsto d_{11}\phi(x) d_1\phi(x) / \phi(x)$ is an odd function and the definition of $\cI_\phi$ (see \eqref{IHD}).
Finally, since $\lambda_1\mu_1^{(1)} = \lambda_2\mu_2^{(1)}$, we deduce that:
\begin{eqnarray}
\KL(\PP_{\theta_1},\PP_{\theta_2}) 
&=& \frac 1 4 (\lambda_1 \{\mu_1^{(1)}\}^2 -  \lambda_2 \{\mu_2^{(1)}\}^2 ) \lambda_1 \|\mu_1\|^2 \cI_\phi  + o(\|\mu_1\|^4)\nonumber\\
&=& \frac 1 4 \left(1 - \frac{\lambda_1}{\lambda_2}  \right) \lambda_1^2 \|\mu_1\|^4 \cI_\phi + o(\|\mu_1\|^4).\label{eq:kldl}
\end{eqnarray}
Next, let $\bar\lambda\in(0,1)$. Choosing $\lambda_2= \frac{\bar\lambda}{2} < \bar \lambda$ and $\lambda_1 =  \frac 1 \alpha \lambda_2$ with $\alpha=\frac{1+\sqrt{5}}{2}$, we have: 
$$
\left(1-\frac{\lambda_1}{\lambda_2}\right)\lambda_1^2 =(\lambda_1-\lambda_2)^2.
$$
Thus,
$$
\KL(\PP_{\theta_1},\PP_{\theta_2})  = \frac 1 4 (\lambda_2 - \lambda_1)^2 \|\mu_1\|^4 \cI_\phi + o(\|\mu_1\|^4).
$$

In order to apply Lemma \ref{propgen-lowerbound}, let $\delta>0$ such that $2 \delta = \rho(\theta_1,\theta_2)\wedge \rho(\theta_2,\theta_1)$. According to our constraint $\lambda_1 \mu_1^{(1)} = \lambda_2 \mu_2^{(1)}$ and $\lambda_2 = \alpha \lambda_1 > \lambda_1$, we observe that $\mu_2^{(1)} < \mu_1^{(1)}$ so that: 
$$
 	2 \delta = \|\mu_2\|^2 |\lambda_1-\lambda_2|.
$$
We deduce that: 
$$
	|\lambda_1-\lambda_2| \|\mu_1\|^2 = |\lambda_1-\lambda_2| \left(\frac{\lambda_2}{\lambda_1}\right)^2 \|\mu_2\|^2 = 2 \delta \alpha^2
$$
and 
$$
	\|\mu_1\|^2 = \left(\frac{\lambda_2}{\lambda_1}\right)^2 \|\mu_2\|^2 = \alpha^2 \frac{4\alpha}{(\alpha-1)\bar\lambda} \delta.
$$
Thus,
$$
	\KL(\PP_{\theta_1},\PP_{\theta_2})  = \delta^2 \alpha^4 \cI_\phi + o(\delta^2),
$$
and according to Lemma \ref{propgen-lowerbound}, we obtain:
$$
\underset{\hat\theta}{\inf}\  \underset{\theta\in\Th}{\sup}\ \EE[\|\mu\|^4 (\lambda-\hat\lambda)^2] \geq \frac{\delta^2}{2} \left\{1 -  \sqrt{\frac n 2  \delta^2 \left[ \alpha^4 \cI_\phi + o(1)  \right]}\right\}.
$$
The choice of $\delta$ is determined by the right brackets that should be non-negative. We can choose:
$$
\delta = \left[2 n \alpha^4 \cI_\phi\right]^{-\frac 1 2},
$$
so that  $  \frac n 2  \delta^2 \left[ \alpha^4 \cI_\phi + o(1)  \right] = \frac 1 4 (1 + o(1)). 
$
Thus, an integer $N$ exists such that: 
$$
\forall n \geq N \qquad 
\underset{\hat\theta}{\inf}\  \underset{\theta\in\Th}{\sup}\ \EE[\|\mu\|^4 (\lambda-\hat\lambda)^2]  \geq  \frac{\delta^2}{6}  = \frac{1}{12 \alpha^4 \cI_\phi n}. 
$$
This ends the proof of the first point.

\paragraph{Point $(ii)$}

We define the loss function $\rho(\theta_1,\theta_2) = \lambda_1 \|\mu_1\|  \|\mu_1 -  \mu_2\|$ and $\Phi(t)=t^2$. 
The function $\rho$ satisfies the weak triangle inequality \eqref{condrho}:
\begin{eqnarray*}
\lefteqn{\forall (\theta_1,\theta_2,\theta_3) \in \Th^3:} \\
& \rho(\theta_1,\theta_3) + \rho(\theta_2,\theta_3) =& \lambda_1 \|\mu_1\| \|\mu_1 - \mu_3\| + \lambda_2 \|\mu_2\| \|\mu_2 - \mu_3\|\\
& &\geq \min(\lambda_1 \|\mu_1\|,\lambda_2\|\mu_2\|) \|\mu_1 - \mu_2\| \\
&&\geq  \rho(\theta_1,\theta_2) \wedge \rho(\theta_2,\theta_1).
\end{eqnarray*}

The proof follows the same lines as the ones of $(i)$ and our starting point is once again the Kullback-Leibler divergence asymptotics given in Equation \eqref{eq:kldl}. Our baseline relationship $\lambda_1 \mu_1 = \lambda_2 \mu_2$ is still necessary and we obtain while choosing $\mu_1=(\mu_1^{(1)},0,\ldots,0)$ and  $\mu_2=(\mu_2^{(1)},0,\ldots,0)$:
$$
\KL(\PP_{\theta_1},\PP_{\theta_2}) 
= \frac{\cI_\phi}{4} \left(1 - \frac{\lambda_2}{\lambda_1}  \right) \lambda_1^2 \mu_1^4 + o(\|\mu_1\|^4).
$$
We choose $\mu_1=2 \mu_2$ so that $\lambda_2=2 \lambda_1$ and:
$$
\rho(\theta_1,\theta_2) \wedge \rho(\theta_2,\theta_1) = \lambda_1 \|\mu_1\| \|\mu_1-\mu_2\| =  \frac 1 2 \lambda_1 \|\mu_1\|^2 := 2 \delta.
$$
The coefficients $\lambda_1$ and $\lambda_2$ can be made explicit, e.g., $\lambda_1=\bar{\lambda}/2$ and $\lambda_2=\bar{\lambda}$.  This choice implies that $\mu_1^{(1)} = 2 \sqrt{2 \delta/\bar{\lambda}}$. These settings can be used in the result of Lemma \ref{propgen-lowerbound} and we obtain:
$$
\underset{\hat\theta}{\inf}\  \underset{\theta\in\Th}{\sup}\ \EE[\lambda^2 \mu^2 (\mu-\hat\mu)^2]   
\geq \frac{\delta^2}{2} \left\{1 - \sqrt{\frac{n\delta^2}{2}   \left[2 \cI_\phi+ o(1)\right]}\right\}.
$$
We can obtain an efficient lower bound by choosing: 
$$
\delta_n :=\frac{1}{2 \sqrt{n \cI_\phi}},
$$
which implies, of course, that $\mu_1=o(1)$ and $\mu_2=o(1)$.
According to this choice, an integer $N$ exists such that $\forall n \geq N$:
\begin{eqnarray*}
\underset{\hat\theta}{\inf}\  \underset{\theta\in\Th}{\sup}\ \EE[\lambda^2 \|\mu\|^2 \|\mu-\hat\mu\|^2] 
  & \geq & \frac{1}{8 n \cI_\phi} \times (1-\frac{1}{2})/2 =  \frac{1}{32 n \cI_\phi}.
\end{eqnarray*} 
This ends the proof of the second point.
\hfill
$\square$

\section*{Acknowledgments}
This work was partially supported by the French Agence Nationale de la Recherche (ANR-
13-JS01-0001-01, project MixStatSeq).

\bibliographystyle{siam}         
\bibliography{GMM_bib}

\end{document}